 \newtheorem{theorem}{Theorem}[section]
 \newtheorem{corollary}{Corollary}[section]
 \newtheorem{lemma}{Lemma}[section]
 \newtheorem{proposition}{Proposition}[section]
 \newtheorem{remark}{Remark}[section]
 \numberwithin{equation}{section}
\def\e{\varepsilon}
\newcommand{\beq}{\begin{equation}}
\newcommand{\eeq}{\end{equation}}
 \def\non{\nonumber }
\def\bea{\begin{eqnarray}}
\def\eea{\end{eqnarray}}
\def\w{\tilde{w}}
\begin{document}
\title{Boundedness of Classical Solutions to a Degenerate Keller--Segel Type Model with Signal-dependent Motilities}

\author{Kentaro Fujie\thanks{Research Alliance Center for Mathematical Sciences, Tohoku University, Sendai 980-8578, Miyagi, Japan, \textsl{fujie@tohoku.ac.jp}},
	\ Jie Jiang\thanks{Innovation Academy for Precision Measurement Science and Technology, CAS,
		Wuhan 430071, HuBei Province, P.R. China,
		\textsl{jiang@wipm.ac.cn, jiang@apm.ac.cn}.}}
\date{\today}

\maketitle

\begin{abstract} 
In this paper, we consider the initial Neumann boundary value problem for a degenerate kinetic model of Keller--Segel type. The system features a signal-dependent decreasing motility function that vanishes asymptotically, i.e., degeneracies may take place as the concentration of signals tends to infinity. 
In the present work, we are interested in the boundedness of classical solutions when the motility function satisfies certain decay rate assumptions. 
Roughly speaking, in the two-dimensional setting, we prove that classical solution is globally bounded  if the motility function decreases slower than an exponential speed at high signal concentrations. In higher dimensions, boundedness is obtained when the motility decreases at certain algebraical speed. The proof is based on the comparison methods developed in our previous work \cite{FJ19a,FJ19b} together with a  modified Alikakos--Moser type iteration. Besides, new  estimations involving certain weighted energies are also constructed to establish the boundedness.

{\bf Keywords}: Classical solutions, boundedness, degeneracy, chemotaxis, Keller--Segel models.\\
\end{abstract}	

\section{Introduction}	
This paper is a continuation of our previous work \cite{FJ19a,FJ19b} on the the following initial boundary value problem:
	\begin{equation}
	\begin{cases}\label{chemo1}
	u_t=\Delta (\gamma (v)u)&x\in\Omega,\;t>0,\\
	\e v_t-\Delta v+v=u&x\in\Omega,\;t>0,\\
	\partial_\nu u=\partial_\nu v=0,\qquad &x\in\partial\Omega,\;t>0,\\
	u(x,0)=u_0(x),\;\;\e v(x,0)=\e v_0(x),\qquad & x\in\Omega.
	\end{cases}
	\end{equation}
  This model was recently proposed in \cite{PRL12,Sciencs11} to describe the process of pattern formations via the so-called ``self-trapping" mechanism. Here, $u$ and $v$ stand for the density of cells and the concentration of signals, respectively. The cellular motility $\gamma(\cdot)$ was assumed to be suppressed by the concentration of signals, which characterizes the incessant tumbling of cells at high concentration.  
  In other words, $\gamma(\cdot)$ is a  signal-dependent decreasing function, i.e., $\gamma'(v)\leq 0$. Hence, the system features a vanishing macroscopic motility as $v$ becomes unbounded.
  
   This model can also be regarded as a special version of Keller--Segel type model with signal-dependent diffusion rates and chemo-sensitivities introduced  by Keller and Segel in their seminal works \cite{KSa,KSb,KSc}. In fact, in \cite{KSb}, the evolution of cell density was described by the following  equation:
   \begin{equation}\label{ksv}
   	u_t=\nabla \cdot(\mu(v)\nabla u-u\chi(v)\nabla v),
   \end{equation}where  the cell diffusion rate $\mu$ and chemo-sensitivity $\chi$ are linked via
   \begin{equation}\label{ksv0}
   	\chi(v)=(\sigma-1)\mu'(v).
   \end{equation}
On the other hand, a direct decomposition of the right-hand side of the first equation in \eqref{chemo1} yields  the following variant form
 \begin{equation}
 \begin{cases}
 u_t=\nabla \cdot (\gamma (v)\nabla u)+\nabla\cdot(u\gamma'(v)\nabla v),\\
 \e v_t-\Delta v+v=u,
 \end{cases}
 \end{equation}
 which corresponds to the special case of \eqref{ksv} with $\sigma=0$ in \eqref{ksv0}. Recall that  the parameter $\sigma$ is proportional to the distance between chemical receptors in the cells.
 In the case $\sigma=0$, the distance between receptors is zero. In other words, chemotaxis occurs because of an undirected effect on activity due to the presence of a chemical sensed by a single receptor (local sensing), which is distinct from the directed chemotactic movement when $\sigma>0$ attribute to gradient sensing mechanism by comparing the chemical concentrations at different spots.

 Theoretical analysis for problem \eqref{chemo1} has been carried out recently in several work, see e.g., \cite{Anh19,JKW18,LW2020,LX2019,WW2019,YK17,TaoWin17,BLT20}, where the major degeneracy issue was tackled basically by energy method. In fact, standard elliptic/parabolic regularity theory tells that $L^\infty_tL^p_x$-boundedness of $u$ with any $p>\frac{n}{2}$ will yield to an upper bound of $v$. Thus, an indirect way to prevent degeneracy is to establish higher integrability of $u$. However, this idea seems only efficient for some specific cases, where several additional assumptions are needed to achieve the $L^\infty_tL^p_x$-boundedness of $u$. For example, smallness of some coefficients \cite{YK17}, particular choices of the motility functions \cite{Anh19,YK17}, or a presence of logistic source term in the first equation \cite{JKW18,LW2020,LX2019,WW2019}, etc.

In contrast, a new comparison method based on a careful observation of the delicate nonlinear structure  was developed to establish the upper bound of $v$ directly in our previous work \cite{FJ19a,FJ19b}. We proved that in any spatial dimensions and with any motility function satisfying $\mathrm{(A0)}$ when $\e=0$, or additionally $\mathrm{(A1)}$ when $\e>0$ below,  the upper bound of $v$ grows at most exponentially in time and thus degeneracy cannot take place in finite time. Then  we showed that classical solution always exists globally in dimension two. Moreover, under certain polynomial growth condition on $1/\gamma$, we discussed  uniform-in-time boundedness when $n=2,3$.  More importantly, occurrence of exploding solutions was examined for the first time. In the case $\gamma(v)=e^{-v}$, a novel critical-mass phenomenon in the  two-dimensional setting was observed  that with any sub-critical mass, the global solution is uniformly-in-time bounded while with certain super-critical mass, the global solution  will blow up at time infinity. We mention that in the special case $\gamma(v)=e^{-v}$,  global boundedness with sub-critical mass and possible blowup at unspecified blow-up time with super-critical mass was also proved in \cite{JW20} by energy method. In \cite{BLT20}, the authors also verified that blowup of classical solution must occur at time infinity by duality method and moreover, weak solutions was obtained in any dimensions when $\gamma(v)=e^{-v}$.

In the present work, we aim to continue our discussion on uniform-in-time boundedness of classical solutions with generic motility functions and arbitrarily large initial data. Throughout this paper, we assume  $\Omega\subset\mathbb{R}^n$ with $n\geq2$  being a smooth bounded domain and
\begin{equation}\label{ini}
(u_0,v_0)\in C^0(\overline\Omega)\times{W^{1,\infty}(\Omega)},\quad u_0\geq0,\; v_0>0 \quad  \mbox{in } \overline\Omega, \quad u_0\not\equiv0.
\end{equation}
For  $\gamma$, we first require in general that
\begin{equation}\label{gamma0a}
\mathrm{(A0)}:\gamma(v)\in C^3[0,+\infty),\;\gamma(v)>0,\;\;\gamma'(v)\leq0\;\;\text{on}\;(0,+\infty).
\end{equation}
Additionally if $\e>0$, we need  the following asymptotic property:
\begin{equation}\label{gamma}
\mathrm{(A1)}:\lim\limits_{s\rightarrow+\infty}\gamma(s)=0.
\end{equation}
Moreover,  in order to study the uniform-in-time boundedness, we will propose certain decreasing assumptions on $\gamma$. In particular  we shall consider the form $\gamma(s)=s^{-k}$ with some $k>0$ as a toy model. In this case, the variant form reads
\begin{equation}\label{variant2}
 \begin{cases}
u_t=\nabla\cdot\left[v^{-k}(\nabla u-ku\nabla \log v)\right],\\
\e v_t-\Delta v+v=u,
\end{cases} 
\end{equation}
which resembles the classical Keller--Segel model with a logarithmic chemo-sensitivity:
\begin{equation}\label{logKS}
\begin{cases}
u_t=\nabla\cdot (\nabla u-ku\nabla \log v),\\
\varepsilon v_t=\Delta v-v+u.
\end{cases}
\end{equation}

Up to now, theoretical results on \eqref{logKS} are far from satisfactory. Roughly speaking, existence of global solutions or blowups seems to be determined by the size of $k$. Blowup solution was constructed only  in the radial symmetric case when $\e=0$,  $n\geq3$ and $k>\frac{2n}{n-2}$ \cite{Nagai98}. On the other hand, there are several attempts on enlarging the admissible range of $k$ for global existence and however, the threshold number is still unclear. Since a complete description of related results can be found in \cite{FS2016,fs2018, LanWin}, we omit a detailed review here.

For the degenerate model under consideration, the  result in our work \cite{FJ19a,FJ19b} in the two-dimensional setting asserts that any polynomial decreasing motility would give rise to globally boundedness whereas the exponentially decaying type will result in a critical mass phenomenon,  which strongly indicates that the dynamic of solutions is closely related to the decay rate of $\gamma(\cdot)$. In this regard, one motivation of the current work is to understand the connection between decay rate and global existence or boundedness. In the context of  particular choice $\gamma(s)=s^{-k}$, it suffices to find out an admissible range of $k$ as well. Note that systems \eqref{variant2} and \eqref{logKS} share the same set of equilibria. Thus a study on our system with $\gamma(s)=s^{-k}$ may also lead us  to a better understanding of the mechanism of the logarithmic Keller--Segel model.

Now we recall some related results on the toy model case \eqref{variant2}. If  $\e=0$, global classical solution with uniform-in-time bounds was obtained by delicate energy estimates in \cite{Anh19} when $n\leq 2$ for any $k>0$ or $n\geq3$ for $k<\frac{2}{n-2}$ (see \cite{Wang20} for an alternative proof; see also \cite{YK17} for global existence under certain smallness assumptions;). In our previous work \cite{FJ19a,FJ19b}, we generalized above boundedness results in dimension two for any $\e\geq0$ under the following weaker assumption:
 \begin{equation}\label{gamma2}\mathrm{(A2)}:\qquad\text{there is $k>0$ such that}
 \lim\limits_{s\rightarrow+\infty}s^{k}\gamma(s)=+\infty.
 \end{equation}
More precisely, we proved that if $n=2$, then problem \eqref{chemo1} with any $\e\geq0$ has a uniformly-in-time bounded classical solution provided that $\gamma$ satisfies $\mathrm{(A0)}-\mathrm{(A1)}$ as well as $\mathrm{(A2)}$. Note that $\mathrm{(A2)}$ allows $\gamma$ to take any decreasing form within a finite region and moreover, other algebraically decreasing functions are permitted as well, for example, $\gamma(v)=\frac{1}{v^{k}\log(1+v)}$ with any $k>0$. Furthermore, if $n=3$ and $\e>0$, we also obtained globally bounded solution provided additionally that
\begin{equation}
\label{A3}\mathrm{(A3)}:\qquad 2|\gamma'(s)|^2\leq \gamma(s)\gamma''(s),\;\;\forall\;s>0.
\end{equation}
Under assumptions $\mathrm{(A0)}$, $\mathrm{(A1)}$ and $\mathrm{(A3)}$, $1/\gamma(s)$ can grow at most linearly in $s$. Correspondingly, if we take $\gamma(s)=s^{-k}$, then $\mathrm{(A3)}$ will yield to a constraint $k\leq1$.

 Now we are in a position to state the main results of the current work. First, we give uniform-in-time boundedness for the two dimensional case. Note  in previous work \cite{FJ19a,FJ19b,Anh19}, $\gamma$ can decrease at most algebraically in $v$. 
 \begin{theorem}\label{TH0}
 	Assume $n=2$. Suppose that $\gamma$ satisfies $\mathrm{(A0)}$ if $\e=0$ and additionally $\mathrm{(A1)}$ if $\e>0$. Moreover, suppose that
 \begin{equation}
 \label{A2'}\mathrm{(A2')}:\qquad
 \lim\limits_{s\rightarrow+\infty}e^{\alpha s}\gamma(s)=+\infty,\;\;\forall\;\alpha>0.
 \end{equation}
 Then problem \eqref{chemo1} has a unique global classical solution which is uniformly-in-time bounded.
 
 On the other hand, if there is $\chi>0$ such that\begin{equation}
 \label{A2''}\mathrm{(A2'')}:\qquad
 \lim\limits_{s\rightarrow+\infty}e^{\chi s}\gamma(s)=+\infty,
 \end{equation}then the solution of \eqref{chemo1} is uniformly-in-time bounded provided that $\|u_0\|_{L^1(\Omega)}<\frac{4\pi}{\chi}$.
 \end{theorem}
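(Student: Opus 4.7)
The plan is to establish a time-uniform bound on $v$ (which by standard elliptic/parabolic regularity together with $L^\infty_tL^p_x$-boundedness of $u$ for some $p>n/2=1$ will upgrade to uniform boundedness of $u$), using a weighted entropy-type functional coupled to the sharp 2D Moser--Trudinger inequality, and then boosting through a modified Alikakos--Moser iteration. The overall philosophy is that (A2'') forces $1/\gamma(s)$ to grow at most like $e^{\chi s}$ asymptotically (up to sub-exponential factors), so the problem ``looks like'' a Keller--Segel with sensitivity $\chi$ at the critical scale, and (A2') is the case where $\chi$ can be taken arbitrarily small.

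First, I would invoke the comparison method of \cite{FJ19a,FJ19b}, which under $(\mathrm{A0})$--$(\mathrm{A1})$ already yields global existence of the classical solution together with at most exponential-in-time growth of $\|v(\cdot,t)\|_{L^\infty}$. The remaining task is therefore purely to upgrade this to a time-independent bound.

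Second, I would test the first equation against $\log u+\chi v$. Using the second equation to trade $\e v_t-\Delta v+v$ for $u$ and integrating by parts, I expect an inequality of the schematic form
\begin{equation*}
\frac{d}{dt}\Bigl(\io u\log u+\frac{\chi}{2}\io(|\na v|^2+v^2)+\frac{\e\chi}{2}\io v^2\Bigr)+\io\gamma(v)\frac{|\na u|^2}{u}+\text{dissipation in }v\;\le\;\chi\io u\, v+(\text{error from }\gamma'(v)).
\end{equation*}
Under $(\mathrm{A2''})$ the asymptotic estimate $1/\gamma(s)\lesssim e^{\chi s}$ converts the $\gamma'(v)$ error terms into quantities controlled by $\io e^{\chi v}$. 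Combined with mass conservation $\io u=\io u_0$, the Moser--Trudinger inequality in its sharp form $\io e^{4\pi w^2/\|\na w\|_{L^2}^2}\le C$ gives a uniform bound as soon as $\chi\|u_0\|_{L^1(\Omega)}<4\pi$, i.e.\ exactly $\|u_0\|_{L^1(\Omega)}<4\pi/\chi$. This closes a uniform-in-time $L\log L$ estimate for $u$ and an $H^1$ estimate for $v$. For the first statement under $(\mathrm{A2'})$, since any $\alpha>0$ is admissible one simply picks $\chi$ so small that $\chi\|u_0\|_{L^1(\Omega)}<4\pi$ regardless of the mass, and the same argument closes.

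Third, with a uniform $L^p$-bound on $u$ for some $p>1$ in hand, I would run a modified Alikakos--Moser iteration on $\io u^q$ for increasing exponents $q$. The modification is essential: the natural diffusion coefficient is the degenerate $\gamma(v)$, so I would incorporate the pointwise upper bound of $v$ obtained from the uniform estimate (which in turn gives a pointwise lower bound on $\gamma(v)$) to ensure that each step of the iteration has a non-degenerate dissipation. Boundedness of $\|u\|_{L^\infty}$ and then $\|v\|_{L^\infty}$ uniform in time follows.

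The hard part will be the precise algebraic design of the weighted test functional in step two so that (a) the $\gamma'(v)$ cross terms produced by integration by parts are exactly absorbed by the dissipation $\io\gamma(v)|\na u|^2/u$ and the $v$-dissipation, up to quantities of the form $\io e^{\chi v}$, and (b) the constant multiplying the critical exponential term matches the Moser--Trudinger threshold $4\pi$. The sub-critical case $\chi\|u_0\|_{L^1(\Omega)}<4\pi$ leaves a strictly positive slack that lets one absorb lower order remainders via a Gronwall argument; hitting this exact constant is where most of the technical effort will have to go, together with the adaptation of Alikakos--Moser to the degenerate setting.
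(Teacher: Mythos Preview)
Your proposal has a genuine gap. Testing the first equation against $\log u+\chi v$ (or any variant such as $\log(\gamma(v)u)$) inevitably produces cross terms involving $\gamma'(v)$, for instance $\int_\Omega \gamma'(v)\nabla u\cdot\nabla v$ and $\int_\Omega u\gamma'(v)|\nabla v|^2$; after Young's inequality these leave a remainder of the type $\int_\Omega u\,\frac{|\gamma'(v)|^2}{\gamma(v)}\,|\nabla v|^2$. The hypotheses $\mathrm{(A0)}$, $\mathrm{(A1)}$, $\mathrm{(A2')}$ or $\mathrm{(A2'')}$ give \emph{no} pointwise control on $\gamma'$ or on $|\gamma'|^2/\gamma$: one can build monotone $C^3$ functions $\gamma$ satisfying $\mathrm{(A2')}$ for which $|\gamma'(s)|^2/\gamma(s)$ is unbounded along a sequence $s_k\to\infty$ (take $-\log\gamma$ growing like $\log_2 s$ but with steep smoothed jumps near $s=2^k$). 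So the claim that ``$\mathrm{(A2'')}$ converts the $\gamma'(v)$ error terms into quantities controlled by $\int_\Omega e^{\chi v}$'' is not justified, and the entropy inequality does not close under the stated assumptions. A secondary issue: even if the $L\log L$ bound did close, you still need to pass from $L\log L$ to $L^p$ for some $p>1$ before your step three can start, and in this degenerate setting that passage is not automatic.

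The paper avoids $\gamma'$ entirely by working with the auxiliary variable $w=(I-\Delta)^{-1}u$ and the key identity $w_t+\gamma(v)u=(I-\Delta)^{-1}[\gamma(v)u]$, which involves only $\gamma$, never $\gamma'$. One multiplies this identity by $u$ (not $\log u$) to get $\tfrac{d}{dt}\|w\|_{H^1}^2+\int_\Omega\gamma(v)u^2\le C\int_\Omega\gamma^{-1}(v)w^2$; then $\mathrm{(A2')}$/$\mathrm{(A2'')}$ together with the pointwise comparison $v\le \tilde C(w+K)$ reduce the right-hand side to $\int_\Omega e^{A w}w^2$, and the $4\pi$ threshold enters through the Brezis--Merle type bound $\int_\Omega e^{Aw}\le C$ for $A<4\pi/\|u_0\|_{L^1}$ (valid because $w$ solves $-\Delta w+w=u$). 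This yields a uniform $H^1$ bound on $w$ and a time-integrated bound on $\int_\Omega\gamma(v)u^2$; a second use of the key identity plus the uniform Gronwall inequality then gives $\sup_t\|w\|_{L^\infty}<\infty$, hence $\sup_t\|v\|_{L^\infty}<\infty$, after which $\gamma(v)$ is uniformly bounded below and the standard 2D bootstrap from \cite{FJ19a,FJ19b} finishes the job. No Alikakos--Moser iteration is used in the two-dimensional argument.
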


 \begin{remark}\label{remTH0}Recall that in \cite{FJ19a,FJ19b}, we have proved that in the two dimensional setting, classical solution always exists globally provided that $\gamma$ satisfies $\mathrm{(A0)}$ if $\e=0$ and additionally $\mathrm{(A1)}$ if $\e>0$. In addition, when $\gamma=e^{-\chi v}$ for any $\chi>0$, there is a critical-mass phenomenon  with the threshold number being $\frac{4\pi}{\chi}$ for general bounded domains. More precisely, the global classical solution is uniform-in-time bounded with any sub-critical mass while with certain super-critical mass, the solution will become unbounded as time goes to infinity.
 	
Note that	$\mathrm{(A2')}$ is weaker than $\mathrm{(A2)}$. 
 	For example, $\gamma (s) = e^{-\chi s^{\beta}}$ with any $\chi>0$ and $0<\beta<1$ is excluded from $\mathrm{(A2)}$, but satisfies $\mathrm{(A2')}$. In this regard, Theorem \ref{TH0}  partially indicates that in 2D the exponentially decay rate of $\gamma$ is critical for global boundedness of the classical solutions  with large mass.
 \end{remark}

 Next, we consider higher dimensional cases.  For the parabolic-elliptic case $\e=0$, we obtain that
\begin{theorem}
	\label{TH1} Assume $n\geq3$ and $\e=0$. Suppose that $\gamma$ satisfies  $\mathrm{(A0)}$ and the following condition:	
	\begin{equation}
	\label{A3a}\mathrm{(A3a)}:\qquad\sqrt{\frac{n}{2}}|\gamma'(s)|^2< \gamma(s)\gamma''(s),\;\;\forall\;s>0.
	\end{equation}
Then problem \eqref{chemo1} has a unique global classical solution.

In addition, if $\gamma$ satisfies  $\mathrm{(A1)}$ and
	\begin{equation}
\label{A3u}\mathrm{(A3u)}:\qquad l_0|\gamma'(s)|^2\leq \gamma(s)\gamma''(s),\;\;\text{with some}
\;l_0>\frac{n}{2}\;\;\text{for all}\;s>0,
\end{equation}
 then the global solution is uniformly-in-time bounded.
\end{theorem}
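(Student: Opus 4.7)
My plan is to combine the pointwise comparison method from \cite{FJ19a,FJ19b} with a weighted $L^p$ energy estimate and an Alikakos--Moser iteration; the two parts of the theorem differ only in the strength of the absorption that (A3a) versus (A3u) affords. For the global existence statement, since $\e=0$ the elliptic relation $-\Delta v+v=u$ lets us treat the first equation as a closed fourth-order equation for $v$. Following the comparison scheme of \cite{FJ19a,FJ19b}, I would test this equation against an auxiliary function built from $\gamma$ (morally an antiderivative of $1/\gamma$) to derive a differential inequality yielding $\|v(\cdot,t)\|_{L^\infty}\le C(T)$ on every finite interval, with $C(T)$ at most exponential in $t$. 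The constant $\sqrt{n/2}$ in (A3a) is precisely the sharp dimension-dependent factor needed to absorb the remaining cross term via Cauchy--Schwarz, and the strict inequality supplies the necessary slack. Standard local quasilinear theory, which applies as long as $\gamma(v)\ge\gamma(C(T))>0$, then extends the solution to a global classical one.

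For the uniform-in-time boundedness under (A3u), I would work with a weighted energy of the form $\int u^p\, g_p(v)\,dx$, where $g_p$ is an appropriate (negative) power of $\gamma(v)$ tuned to $p$. Differentiating in time and integrating $u_t=\na\cdot(\gamma(v)\na u+u\gamma'(v)\na v)$ by parts produces a dissipation $\int u^{p-2}\, g_p(v)\,\gamma(v)|\na u|^2$, two cross terms involving $\gamma'(v)\na u\cdot\na v$, a $|\na v|^2$-term with coefficient $|\gamma'(v)|^2$, and a $v_t$-contribution. Choosing $g_p$ so that the cross terms (nearly) cancel, then applying (A3u) in the form $|\gamma'|^2\le\gamma\gamma''/l_0$ together with the identity $\gamma''|\na v|^2=\Delta\gamma(v)-\gamma'(v-u)$ and one further integration by parts, recasts the $|\na v|^2$-term into pieces that can be absorbed by the dissipation plus an elliptic regularity contribution coming from $u=v-\Delta v$. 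The strict inequality $l_0>n/2$ is exactly what ensures, after a Gagliardo--Nirenberg interpolation, an absorption constant strictly smaller than one uniformly in $p$. A Moser iteration then delivers $\|u\|_{L^\infty}\le C$ uniformly in $t$, and elliptic regularity for $-\Delta v+v=u$ promotes this to the corresponding bound on $v$.

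I expect the main obstacle to lie in the last step: tracking how the constants in the weighted estimate depend on $p$. The cancellation of cross terms is not exact under a generic weight, and the Young-inequality constants together with the Gagliardo--Nirenberg exponents must remain controlled as $p\to\infty$; this is what makes the iteration \emph{modified} compared with the classical Alikakos--Moser scheme. The strict inequality $l_0>n/2$ leaves only an $O(1)$ margin, which is precisely what is consumed by the Sobolev embedding in the absorption step---and is presumably the structural reason that (A3a) suffices only for global existence while the stronger (A3u) is needed for uniform-in-time boundedness.
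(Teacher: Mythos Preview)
Your plan for global existence under (A3a) has a genuine gap. The finite-time $L^\infty$ bound on $v$ that you describe---via the key identity $v_t+\gamma(v)u=(I-\Delta)^{-1}[\gamma(v)u]$ and comparison---requires only (A0) and already gives $v(x,t)\le v_0(x)e^{\gamma(v_*)t}$ (Lemma~\ref{keylem1}); (A3a) plays no role there. More importantly, non-degeneracy of $\gamma(v)$ on $[0,T]$ is \emph{not} enough to conclude global existence: in the form $u_t=\nabla\cdot(\gamma(v)\nabla u+u\gamma'(v)\nabla v)$ the drift behaves like a Keller--Segel aggregation term and can drive $u$ to blow up in $n\ge 3$ even with uniformly parabolic diffusion. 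One still needs $\|u(\cdot,t)\|_{L^{p}}$ controlled for some $p>\tfrac n2$ (Lemma~\ref{criterion}), and it is precisely here that (A3a) enters. In the paper this is done by differentiating $\int_\Omega u^{p+1}\gamma^{q}(v)$ with a \emph{positive} power $q>0$: combining the key identity (multiplied by $qu^{p+1}\gamma^{q-1}\gamma'$) with the first equation, the cross term $\int u^{p}\gamma^{q}\gamma'\nabla u\cdot\nabla v$ is absorbed by Young's inequality, and the resulting algebraic constraint, after optimizing $q=\lambda p$, reads $\sqrt{1+p}\,|\gamma'|^2\le\gamma\gamma''$. Thus (A3a) is exactly what allows one to pick $1+p$ just above $\tfrac n2$.

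For uniform boundedness under (A3u) your weighted-energy idea is on the right track but misses a prerequisite step and overshoots at the end. Before the weighted energy $\int u^{p+1}\gamma^{q}(v)$ can control $\int u^{p+1}$, one needs $\gamma(v)\ge c>0$ \emph{uniformly in time}, i.e.\ a time-independent $L^\infty$ bound on $v$. The paper secures this first, by a separate Alikakos--Moser iteration applied to $v$ through the key identity (Proposition~\ref{Propunbv}), using that (A3u) forces (A2) with some $k<\tfrac{2}{n-2}$ (Lemma~\ref{lemA23}). Only then does the weighted energy become equivalent to $\int u^{p+1}$. Second, the optimized constraint above limits the admissible exponents to $1+p\le l_0^{2}$, so a full Moser iteration with $p\to\infty$ is not available; instead one iterates \emph{finitely} many times---using the gained dissipation $\int u^{p-1}\gamma^{q+1}|\nabla u|^2$, Gagliardo--Nirenberg, and the uniform Gronwall lemma---until $\int u^{p+1}$ is bounded for some $p+1>\tfrac n2$, and then invokes Lemma~\ref{criterion}. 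The margin $l_0>\tfrac n2$ guarantees $l_0^{2}>\tfrac{n^2}{4}$, which is more than enough room for this finite climb; it is not consumed by a single Sobolev absorption as you suggest.
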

\begin{remark}
	According to Lemma  \ref{lemA23} below, if $\gamma(\cdot)$ satisfies  $\mathrm{(A0)}$, $\mathrm{(A1)}$ and  $\mathrm{(A3u)}$, then it must fulfill assumption $\mathrm{(A2)}$ with  some $k<\frac{2}{n-2}$. In this regard, our uniform-in-time boundedness result covers those in \cite{Anh19} established for the special case $\gamma(s)=s^{-k}$ with any $0<k<\frac{2}{n-2}$.
\end{remark}
On the other hand when $\e>0$, we prove the following boundedness result.
\begin{theorem}\label{TH2}Assume $n\geq3$ and $\e>0$. Suppose that $\gamma$ satisfies  $\mathrm{(A0)}-\mathrm{(A1)}$  and the following condition:	
	\begin{equation}
	\label{A3b}\mathrm{(A3b)}:\qquad\bigg(1+[\frac{n}{2}]\bigg)|\gamma'(s)|^2\leq \gamma(s)\gamma''(s),\;\;\forall\;s>0,
	\end{equation}
	where $[\frac{n}{2}]$ denotes the maximal integer less or equal to $\frac{n}{2}$.
	Then problem \eqref{chemo1} has a unique global classical solution, which is uniformly-in-time bounded.
\end{theorem}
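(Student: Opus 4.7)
The plan is to extend the analysis of the parabolic-elliptic version (Theorem~\ref{TH1}) to the genuinely parabolic-parabolic setting $\e>0$, in which the dynamics of $v$ are no longer slaved to $u$ and an additional $v$-estimate has to be folded into the energy. The starting point would be the comparison method from \cite{FJ19a,FJ19b}: under $\mathrm{(A0)}$-$\mathrm{(A1)}$ alone one already obtains a global classical solution together with an \emph{a priori} pointwise upper bound for $v$ growing at worst exponentially in time. This both rules out finite-time degeneracy and reduces the theorem to proving that $\|u(\cdot,t)\|_{L^\infty(\Omega)}$ is bounded uniformly in $t$, from which $\|v(\cdot,t)\|_{L^\infty(\Omega)}$ follows by standard parabolic regularity applied to the second equation.

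To achieve a time-uniform $L^\infty$ bound on $u$ I would build a family of coupled weighted energies of the form
$$\mathcal{E}_p(t)=\int_\Omega u^{p}\,\gamma(v)^{-\alpha_p}\,dx+c_p\,\e\int_\Omega|\nabla v|^{2}\,\gamma(v)^{-\beta_p}\,dx,$$
with exponents $\alpha_p,\beta_p$ and coupling constants $c_p>0$ tuned to the integer $p$. Differentiating and integrating by parts produces, on the one hand, the natural dissipation $-\int\gamma(v)u^{p-2}|\nabla u|^2$ plus a cross term $-\int u^{p-1}\gamma'(v)\nabla v\cdot\nabla u$ (coming from the divergence form of the $u$-equation), and on the other hand additional cross terms of the schematic form $\int u^p\gamma'(v)\gamma(v)^{-\alpha_p-1}v_t$ and $\int|\nabla v|^{2}\gamma(v)^{-\beta_p}(\Delta v-v+u)/\e$. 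The crucial algebraic step is to absorb all cross terms into a non-negative quadratic form in $(\nabla u,\nabla v)$; the hypothesis $\mathrm{(A3b)}$, equivalent to concavity of $\gamma^{-[n/2]}$, is precisely what forces the resulting discriminant to be non-negative at every doubling level $p_k=2^k$ along the iteration. Heuristically each doubling consumes one ``unit" of the multiplier in $\mathrm{(A3b)}$, and the $[n/2]$ doublings needed to cross the Sobolev threshold in dimension $n$ account for the integer constant $1+[n/2]$.

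With the family of energy inequalities $\mathcal{E}_p'(t)+D_p(t)\leq R_p(t)$ established, I would run a modified Alikakos--Moser iteration: combining the dissipation $D_p$ with a Gagliardo--Nirenberg interpolation and reusing the $v$-equation yields a recursion of the type $\sup_t\mathcal{E}_{p_{k+1}}(t)\leq C\,b^{k}\,(\sup_t\mathcal{E}_{p_k}(t))^{\theta_k}$ with $\theta_k<2$ uniformly in $k$, from which the limit $p_k\to\infty$ delivers the desired time-uniform $L^\infty$ bound on $u$. The main obstacle is the second, $v$-weighted integral in $\mathcal{E}_p$: because $v_t\neq 0$ generates genuinely new cross terms absent in the elliptic case, one must verify at each level $p$ that the quadratic form obtained after completing the square in $(\nabla u,\nabla v)$ remains non-negative, and controlling the weight $\gamma(v)^{-\beta_p}$ as $v\to\infty$ requires the full strength of $\mathrm{(A3b)}$. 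This is exactly where the improvement from the open condition $l_0>n/2$ in $\mathrm{(A3u)}$ to the integer constant $1+[n/2]$ in $\mathrm{(A3b)}$ is required, and I expect it to be the most delicate part of the argument.
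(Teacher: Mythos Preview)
Your outline has a genuine gap and diverges from the paper's mechanism at the key step. The paper does \emph{not} use a single weighted energy $\int u^p\gamma^{-\alpha_p}$ plus a gradient term $\int|\nabla v|^2\gamma^{-\beta_p}$, nor does it run a Moser doubling on $u$. For each fixed integer $1\le p\le[\tfrac n2]$ it differentiates the \emph{linear combination} $\sum_{j=0}^{p}\lambda_{p,j}\int_\Omega u^{1+p}\gamma^j(v)\,dx$ with \emph{positive} powers of $\gamma$ and coefficients $\lambda_{p,j}=\binom{p}{j}$. After substituting $v_t=\Delta v-v+u$ and integrating by parts, the cross term at level $j$ is balanced against the $|\nabla u|^2$-dissipation at level $j-1$; the resulting quadratic form in $(\nabla u,\nabla v)$ is nonnegative precisely when $(p+2-j)|\gamma'|^2\le\gamma\gamma''$, so the worst case $j=1$ gives $(p+1)|\gamma'|^2\le\gamma\gamma''$, which is exactly $\mathrm{(A3b)}$ at $p=[\tfrac n2]$. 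The engine that lifts integrability is the term $-\sum_j j\lambda_{p,j}\int u^{2+p}\gamma^{j-1}\gamma'$ arising from the $u$-part of $v_t$: since $\gamma'\le0$ this is a \emph{good} term on the left-hand side that bounds $\int_t^{t+\tau}\!\int u^{2+p}$ and feeds the step $p\mapsto p+1$ via the uniform Gronwall lemma. This is a finite iteration stopping at $p=[\tfrac n2]$, not a doubling scheme, and it relies on the prior uniform-in-time bound on $v$ (Proposition~\ref{Propunbv} together with Lemma~\ref{lemA23}).

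Your weight $\gamma^{-\alpha_p}$ with $\alpha_p>0$ flips the sign of this crucial term: the $u$-part of $v_t$ now contributes $+\alpha_p\int u^{p+1}\gamma^{-\alpha_p-1}\gamma'\le0$ on the left, i.e.\ a \emph{bad} term rather than a source of higher integrability, and nothing in your gradient piece $\int|\nabla v|^2\gamma^{-\beta_p}$ is set up to absorb it. The heuristic that ``$[n/2]$ doublings are needed to cross the Sobolev threshold'' is also inconsistent with $p_k=2^k$, which crosses $n/2$ in $O(\log n)$ steps, so the accounting that is supposed to produce the integer $1+[\tfrac n2]$ does not close. Finally, the comparison method of \cite{FJ19a,FJ19b} does not by itself give a global classical solution for $n\ge3$; it only supplies a pointwise bound on $v$, and global existence has to come out together with boundedness from the $L^p$ estimate on $u$ via Lemma~\ref{criterion}.
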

\begin{remark}
	For the sake of simplicity, we normalize all physical parameters except $\e$ and in the proof we take $\e=1$ for the case $\e>0$. But the statements of our results and assumptions $\mathrm{(A3a)}$, $\mathrm{(A3u)}$ and $\mathrm{(A3b)}$ are independent of the choice of parameters.
\end{remark}
\begin{remark}
Since $v_0>0$ in $\overline{\Omega}$,	thanks to the strictly positive time-independent lower bound $v_*$ of $v$ for $(x,t)\in\overline{\Omega}\times[0,\infty)$ given in  Lemma \ref{lbdw} and Lemma \ref{lbdv} in the next section, our existence and boundedness results also hold true if $\gamma(s)$ has singularities at $s=0$, for example $\gamma(s)=s^{-k}$ with $k>0$. In such cases, we can simply replace $\gamma(s)$ by a new motility function $\tilde{\gamma}(s)$ which satisfies $\mathrm{(A0)}$ and coincides with $\gamma(s)$ for $s\geq\frac{v_*}{2}$.
\end{remark}
In particular, for the typical case $\gamma(v)=v^{-k}$, we have
\begin{theorem}
	Suppose  that $\gamma(v)=v^{-k}$ and $n\geq3$. Then,
	\begin{itemize}
		\item 	when $\e=0$, problem \eqref{chemo1} has a unique global classical solution provided that $k<\frac{\sqrt{2n}+2}{n-2}$. In addition, the global solution is uniformly-in-time bounded if $k<\frac{2}{n-2}$;
		\item when $\e>0$, problem $\eqref{chemo1}$ has a uniformly-in-time bounded global solution provided that $k\leq 1/[\frac{n}{2}]$.
	\end{itemize}
\end{theorem}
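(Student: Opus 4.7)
The plan is to derive this theorem as a direct specialization of Theorems \ref{TH1} and \ref{TH2}, since $\gamma(v)=v^{-k}$ is precisely the test case whose structural quotient $\gamma\gamma''/|\gamma'|^2$ is a constant depending only on $k$. Accordingly, essentially all the work lies in evaluating that quotient and matching it against the numerical thresholds appearing in (A3a), (A3u), (A3b), plus a short argument to deal with the singularity of $v^{-k}$ at $v=0$.

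Concretely, I would first compute
\[
\gamma(v)=v^{-k},\quad \gamma'(v)=-kv^{-k-1},\quad \gamma''(v)=k(k+1)v^{-k-2},
\]
whence
\[
\frac{\gamma(v)\gamma''(v)}{|\gamma'(v)|^2}=\frac{k(k+1)}{k^2}=1+\frac{1}{k}.
\]
Condition (A3a) then reads $\sqrt{n/2}<1+1/k$, equivalently $k<2/(\sqrt{2n}-2)=(\sqrt{2n}+2)/(n-2)$ after rationalization. Condition (A3u) asks for some $l_0>n/2$ with $l_0\le 1+1/k$, which amounts to the strict inequality $1+1/k>n/2$, i.e.\ $k<2/(n-2)$. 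Condition (A3b) becomes $1+[n/2]\le 1+1/k$, i.e.\ $k\le 1/[n/2]$.

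To handle the fact that $\gamma(v)=v^{-k}$ is singular at $v=0$ and hence does not literally belong to $C^3[0,\infty)$ as demanded by (A0), I would invoke the remark immediately preceding the theorem: Lemmas \ref{lbdw} and \ref{lbdv} furnish a strict positive lower bound $v_*>0$ for $v$ uniform in $(x,t)\in\overline\Omega\times[0,\infty)$, so $\gamma$ may be replaced on $[0,v_*/2]$ by any $C^3$ extension $\tilde\gamma$ satisfying (A0) without affecting the solution. The modified motility still coincides with $v^{-k}$ on $[v_*/2,\infty)$, where it satisfies (A1) (since $v^{-k}\to 0$) and the structural inequalities computed above, and these are the only values of $v$ actually visited by the solution.

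Assembling, Theorem \ref{TH1} yields global existence whenever $k<(\sqrt{2n}+2)/(n-2)$, and uniform-in-time boundedness whenever $k<2/(n-2)$, for the $\varepsilon=0$ case; Theorem \ref{TH2} yields uniform-in-time boundedness whenever $k\le 1/[n/2]$ for the $\varepsilon>0$ case. There is no genuine obstacle here since the result is essentially a corollary; the only care needed is in tracking which inequalities in (A3a)--(A3b) are strict and which are not (in particular, the strict inequality required in (A3u) is what forces $k<2/(n-2)$ rather than $k\le 2/(n-2)$), and in verifying that the smooth extension near $v=0$ is consistent with (A1), which is immediate.
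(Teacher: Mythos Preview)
Your proposal is correct and matches the paper's approach: the theorem is treated there as an immediate corollary of Theorems \ref{TH1} and \ref{TH2} (cf.\ Corollary \ref{cor1} and the remark on singular $\gamma$), obtained by computing $\gamma\gamma''/|\gamma'|^2=1+1/k$ for $\gamma(v)=v^{-k}$ and translating (A3a), (A3u), (A3b) into the stated thresholds on $k$. Your handling of the singularity at $v=0$ via the smooth cutoff below $v_*/2$ is exactly the mechanism the paper uses.
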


Now, let us sketch the main idea of our proof for boundedness in higher dimensions. First, it is necessary to briefly recall some related results in our  work \cite{FJ19a,FJ19b}. Denote  $w(x,t)$  the unique non-negative solution of the following Helmholtz equation:
\begin{equation*}
\begin{cases}
-\Delta w+w=u, &x\in\Omega,\;t>0,\\
\partial_\nu w=0,&x\in\partial\Omega,\;t>0.
\end{cases}
\end{equation*} 
Then we found that (\cite[Lemma 3.1]{FJ19a} or \cite[Lemma 4.1]{FJ19b})
\begin{equation}\label{keyid0}
w_t+\gamma(v)u=(I-\Delta)^{-1}[\gamma(v)u],
\end{equation}
which unveils the intrinsic mechanism of the nonlinear structure.  Here, $(I-\Delta)^{-1}$ denotes the inverse operator of $I-\Delta$ and $\Delta$ is the Laplacian operator with homogeneous Neumann boundary condition.
Using comparison principle of the elliptic equations together with Gronwall's inequality, we proved from the above key identity that (\cite[Lemma 3.2]{FJ19a} or \cite[Lemma 4.1]{FJ19b})
\begin{equation*}
w(x,t)\leq w_0(x)e^{Ct},\;\;\text{for all}\;x\in\Omega\;\;\text{and}\;t\geq0,
\end{equation*}
where  $w_0\triangleq(I-\Delta)^{-1}u_0$ and $C>0$ depends only on $\gamma, \Omega$ and the initial data.  Note that in the parabolic-elliptic case, i.e., $\e=0$ in \eqref{chemo1}, $w$ is identical to $v$. On the other hand when $\e>0$, thanks to the above identity again, upon an application of the comparison principle for parabolic equations, we proved that (\cite[Lemma 4.3]{FJ19b})
\begin{equation}
	v(x,t)\leq C(w(x,t)+1)\;\;\text{for all}\;x\in\Omega\;\;\text{and}\;t\geq0,
\end{equation}
with  $C>0$ depending only on $\gamma, \Omega$ and the initial data. In a word, $v(x,t)$ can  grow point-wisely at most exponentially in time in both cases. 

In addition, under certain decay assumptions for example, $\mathrm{(A2')}$ when $n=2$, or  $\mathrm{(A2)}$ with some $k<\frac{2}{n-2}$ when $n\geq3$, the above upper bound estimate can be further improved. Take $\e=0$ and $n\geq3$ for example and recall that $w=v$ in this case. In \cite{FJ19a,FJ19b}, time-independent upper bounds of $v$ were proved directly when $n\leq3$ by simple arguments based on an application of the uniform Gronwall inequalities. However, since we made use of the Sobolev embedding $H^2\hookrightarrow L^\infty$ there, the technique fails in higher dimensions. In this work, observing that the key identity also  reads ($\e=0$)
\begin{equation}\label{kid0}
v_t-\gamma(v)\Delta v+v\gamma(v)=(I-\Delta)^{-1}[\gamma(v)u],
\end{equation}
we develop an alternative approach based on a delicate Alikakos--Moser type iteration to achieve the same goal in higher dimensions. More precisely, for any $n\geq3$ we are able to prove that under the assumptions $\mathrm{(A0)}$ and $\mathrm{(A2)}$ with any $k<\frac{2}{n-2}$ when $\e>0$, or additional $\mathrm{(A1)}$ when $\e>0$, $v$ has a time-independent upper bound; see Proposition \ref{Propunbv}. 
Note here the uniform-in-time upper bound of $v$ is obtained {\it independently} of $u$ under a much weaker decay rate assumption than that in \cite{Anh19}.

In order to establish the global existence or time-independent boundedness, it remains to derive $L^\infty_tL^p_x$ (time-independent) boundedness of $u$ with some $p>\frac{n}{2}$ due to standard bootstrap argument. Here, the key idea is to construct an estimate for a weighted energy $\int_\Omega u^{p}\gamma^{q}(v)$ with some $p>\frac{n}{2}$ and $q>0$. Since $v$ is bounded from above now, $\gamma(v)$ is bounded from below thanks to its decreasing property. Then $L^\infty_tL^p_x$ boundedness of $u$ follows from the boundedness of  the above weighted  energy.  Adjusting the parameters $p,q$ carefully and using the key identity again, we are able to construct a  new estimation involving the weighted energy which gives rise to the desired boundedness.

We remark that at the present stage, we cannot obtain boundedness results for the case $\e>0$ under the same condition $\mathrm{(A3u)}$ as for the case $\e=0$. The main obstacle comes from  the different equations for $v_t$, where an additional diffusion coefficient $\gamma(v)$  in \eqref{kid0}  helps to weaken the constraint when $\e=0.$ Besides, in the fully parabolic case $\e>0$, we cannot simply adjust $p,q$ in a single estimation involving $\int_\Omega u^{p}\gamma^{q}(v)$ to get the desired result as done for the case $\e=0.$ A different strategy used  here is to list out a system of estimations involving the weighted energies with $p=2,3,...,1+[\frac{n}{2}]$ and $q=0,1,...,p-1$. Then by a careful recombination of such estimations and an iteration argument together with an application of the uniform Gronwall inequality, we prove the time-independent boundedness of the weighted energies.

Before concluding this part, we would like to stress some new features of the present work. Firstly, we improves the boundedness result with arbitrarily large initial data in dimension two, which partially indicates that the exponential decay case is critical for boundedness with large mass. We remark that it is still unknown  whether the 2-D global classical solution would be bounded or blow up at time infinity with large initial data if $\gamma$ decays at a speed faster than exponential rate. Secondly,  uniform boundedness for $v$ is independently proved provided that $\gamma$ satisfies $\mathrm{(A2)}$ with some $k<\frac{2}{n-2}$  when $n\geq3$ by delicate iterations.  For the case $\e=0$, boundedness of $u$ is achieved under a slightly stronger assumption $\mathrm{(A3u)}$. Here, our work also provides an alternative proof for the result in \cite{Anh19} concerning the particularly chosen motility $\gamma(v)=v^{-k}$ with any $k<\frac{2}{n-2}$. Lastly, to the best of our knowledge, boundedness given in Theorem \ref{TH2}  is the first result for the case for $\e>0$ and $n\geq3.$ It is still challenging whether one can prove boundedness of $u$ under the same decay condition as for $v$, or the slightly stronger one $\mathrm{(A3u)}$.

The rest of the paper is organized as follows.  In Section 2, we provide some preliminary results and recall some useful lemmas. Then in Section 3 we use modified Alikakos--Moser iteration to derive the uniform-in-time upper bounds of $v$. In Section 4, we study the parabolic-elliptic case $\e=0$ and establish the boundedness of weighted energy. In Section 5, we prove boundedness of weighted energy for the fully parabolic case.

\section{Preliminaries}	
	In this section, we recall some useful lemmas. First, local existence and uniqueness of classical solutions to system \eqref{chemo1} can be
	established by the standard fixed point argument and  regularity theory for elliptic/parabolic equations. Similar proof can be found in \cite[Lemma 3.1]{Anh19} or \cite[Lemma 2.1]{JKW18}  and hence here we omit the detail here.
	\begin{theorem}\label{local}
		Let $\Omega$ be a smooth bounded domain of $\mathbb{R}^n$. Suppose that $\gamma(\cdot)$ satisfies $\mathrm{(A0)}$ and $(u_0,v_0)$ satisfies \eqref{ini}. Then there exists $T_{\mathrm{max}} \in (0, \infty]$ such that problem \eqref{chemo1} permits a unique non-negative classical solution $(u,v)\in (C^0(\overline{\Omega}\times[0,T_{\mathrm{max}}))\cap C^{2,1}(\overline{\Omega}\times(0,T_{\mathrm{max}})))^2$. Moreover, the following mass conservation holds
		\begin{equation*}
		\int_{\Omega}u(\cdot,t)dx=\int_{\Omega}u_0 dx
		\quad \text{for\ all}\ t \in (0,T_{\mathrm{max}}).
		\end{equation*}	
		If $T_{\mathrm{max}}<\infty$, then

		\begin{equation*}
	\limsup\limits_{t\nearrow T_{\mathrm{max}}} 
		\|u(\cdot,t)\|_{L^\infty(\Omega)}=\infty.
		\end{equation*}
		\end{theorem}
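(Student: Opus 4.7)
The plan is a standard Banach fixed-point argument in a space of continuous functions, where the second equation of \eqref{chemo1} is viewed as a regularity-providing equation for $v$ and the first equation is treated as a linear parabolic equation for $u$ once the motility coefficient is frozen. Fix $T>0$ small and introduce the closed ball
$$
\mathcal B_T:=\Bigl\{\bar v\in C^0(\overline\Omega\times[0,T]):\ \|\bar v-v_0\|_{C^0}\le 1,\ \bar v\ge \tfrac12\min_{\overline\Omega}v_0\Bigr\},
$$
which is nonempty since $v_0>0$ in $\overline\Omega$. For $\bar v\in\mathcal B_T$, the coefficient $D(x,t):=\gamma(\bar v(x,t))$ is continuous and uniformly bounded from below by some $D_*>0$, so the linear problem
$$
u_t=\Delta(Du)\ \text{in }\Omega\times(0,T),\qquad \partial_\nu(Du)=0\ \text{on }\partial\Omega,\qquad u(\cdot,0)=u_0,
$$
is uniformly parabolic and admits a unique solution in $C^0(\overline\Omega\times[0,T])\cap C^{2,1}(\overline\Omega\times(0,T])$ by standard Schauder/$L^p$ theory. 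Rewriting it as $u_t=D\Delta u+2\nabla D\cdot\nabla u+(\Delta D)u$, the parabolic maximum principle preserves $u\ge 0$.

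With this $u$, the second equation of \eqref{chemo1} is an elliptic problem ($\e=0$, solved by Lax--Milgram plus elliptic regularity) or a linear parabolic problem ($\e>0$, solved by standard theory) with initial datum $v_0$, producing a unique solution $v$ in the same regularity class, with $v>0$ by the strong maximum principle. This defines $\Phi:\bar v\mapsto v$ on $\mathcal B_T$. Shrinking $T$ if necessary, continuity of the data and of $\gamma$, together with the parabolic/elliptic estimates, yields $\Phi(\mathcal B_T)\subset\mathcal B_T$. For the contraction property, I would subtract the equations associated to two inputs $\bar v_1,\bar v_2\in\mathcal B_T$: the difference $u_1-u_2$ satisfies a linear parabolic equation whose forcing is controlled by $\|\gamma(\bar v_1)-\gamma(\bar v_2)\|_{C^0}$ via the local Lipschitz continuity of $\gamma$ on $[\tfrac12\min v_0,\|v_0\|_\infty+1]$, and the second equation transfers this to a bound on $v_1-v_2$ with a factor tending to zero as $T\to 0$. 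The unique fixed point is the desired classical solution, and Schauder bootstrapping upgrades regularity to $C^{2,1}$ on $\overline\Omega\times(0,T_{\max})$.

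Mass conservation follows from integrating the first equation of \eqref{chemo1} over $\Omega$, since the boundary integral vanishes thanks to $\partial_\nu(\gamma(v)u)=0$. The extensibility criterion is the usual continuation argument: if $T_{\max}<\infty$ but $\limsup_{t\nearrow T_{\max}}\|u(\cdot,t)\|_{L^\infty}<\infty$, then from the second equation one obtains an upper bound on $\|v(\cdot,t)\|_{L^\infty}$ on $[0,T_{\max})$ and hence a strictly positive lower bound on $\gamma(v)$; parabolic Schauder estimates then give uniform $C^{2+\alpha,1+\alpha/2}$ bounds up to $T_{\max}$, allowing one to restart the fixed-point argument at $t=T_{\max}$ and contradict maximality. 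The only delicate point is the maintenance of a uniform strictly positive lower bound on the motility $\gamma(\bar v)$ throughout the iteration, which is exactly what motivates the particular definition of $\mathcal B_T$; once this is secured, every step reduces to classical linear theory.
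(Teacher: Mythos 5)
The paper does not actually write out a proof of this theorem: it states that local existence follows from ``the standard fixed point argument and regularity theory for elliptic/parabolic equations'' and refers to \cite[Lemma 3.1]{Anh19} and \cite[Lemma 2.1]{JKW18}. Your proposal is in exactly that spirit, so at the level of strategy you and the paper agree. However, as written your iteration has a genuine regularity gap in the choice of function space.

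You take the ball $\mathcal B_T$ inside $C^0(\overline\Omega\times[0,T])$ and then claim that, for $\bar v\in\mathcal B_T$, the frozen problem $u_t=\Delta(D u)$ with $D=\gamma(\bar v)$ ``admits a unique solution in $C^0\cap C^{2,1}$ by standard Schauder/$L^p$ theory,'' and you expand it as $u_t=D\Delta u+2\nabla D\cdot\nabla u+(\Delta D)u$. If $\bar v$ is merely continuous, then $D=\gamma(\bar v)$ is merely continuous: $\nabla D$ and $\Delta D$ need not exist, the non-divergence expansion is meaningless, and Schauder theory (which requires H\"older continuous coefficients, and for the zeroth-order term $\Delta D$ would require $D\in C^{2+\alpha,1+\alpha/2}$) does not apply. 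The same lack of regularity undermines your use of the maximum principle for non-negativity, which needs $\Delta D$ bounded. The standard repair --- and the one used in the references the paper cites --- is to run the iteration in the other order: freeze $\bar u$ in a ball of $C^0(\overline\Omega\times[0,T])$, solve the second (elliptic or linear parabolic) equation for $v$, so that elliptic/parabolic regularity yields $v(\cdot,t)\in C^{1+\alpha}$ (uniformly in $t$) and hence $\gamma(v)$ with H\"older continuous spatial gradient; only then is the $u$-equation, written in divergence form $u_t=\nabla\cdot(\gamma(v)\nabla u)+\nabla\cdot(u\gamma'(v)\nabla v)$, a uniformly parabolic equation with admissible coefficients, solvable by $L^p$/De Giorgi--Nash theory and upgradable to $C^{2,1}$ for $t>0$ by bootstrap. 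Alternatively one invokes Amann's theory for quasilinear parabolic problems directly. Your remaining points (positivity of the frozen diffusion coefficient via $v_0>0$, mass conservation from $\partial_\nu u=\partial_\nu v=0$, and the continuation argument showing that a bound on $\|u\|_{L^\infty}$ controls $v$ and keeps $\gamma(v)$ bounded below, allowing the solution to be extended) are correct and match the standard treatment, but the fixed-point step itself needs the function-space correction above before the linear theory you appeal to is actually applicable.
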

In the same manner as to classical Keller--Segel systems, we can prove the following criterion (see e.g., \cite[Lemma 4.3]{Anh19}).
\begin{lemma}\label{criterion}
	For any $p>\frac{n}{2}$, if the solution of \eqref{chemo1} satisfies that
	\begin{equation}
		\|u(\cdot,t)\|_{L^p(\Omega)}\leq C,\;\;\text{for all} \;t\in(0,T_{\mathrm{max}})
	\end{equation}with some $C>0$,
	then  $T_{\mathrm{max}}=\infty $ and there holds
	\begin{equation}
		\sup\limits_{t>0}\bigg(\|u(\cdot,t)\|_{L^\infty(\Omega)}+\|v(\cdot,t)\|_{W^{1,\infty}(\Omega)}\bigg)\leq C'
	\end{equation}with some $C'>0$. Moreover, if the above constant $C>0$ is time-independent, then the global solution has a uniform-in-time bound as well.	
\end{lemma}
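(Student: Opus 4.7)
The plan is a two-sided bootstrap between the $u$-equation and the $v$-equation, starting from the hypothesis $\|u(\cdot,t)\|_{L^p(\Omega)}\leq C$ with $p>\frac{n}{2}$. The first step is to turn this into an $L^\infty$-bound on $v$: I would apply standard $W^{2,p}$-elliptic regularity to $-\Delta v+v=u$ in the case $\varepsilon=0$, or parabolic maximal $L^p$-regularity to $\varepsilon v_t-\Delta v+v=u$ in the case $\varepsilon>0$, and combine this with the Sobolev embedding $W^{2,p}\hookrightarrow L^\infty(\Omega)$, which holds precisely because $p>\frac{n}{2}$. This yields $\|v(\cdot,t)\|_{L^\infty(\Omega)}\leq C_1$ on $[0,T_{\mathrm{max}})$, and the bound is time-independent whenever $C$ is.

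Since $\gamma$ is positive and non-increasing by $\mathrm{(A0)}$, this bound gives $\gamma(v)\geq \gamma(C_1)=:\gamma_*>0$ uniformly, while $\gamma(v)$, $|\gamma'(v)|$, and $|\gamma''(v)|$ are simultaneously bounded above. Rewriting the first equation as $u_t=\nabla\cdot(\gamma(v)\nabla u)+\nabla\cdot(u\gamma'(v)\nabla v)$, I would next carry out an Alikakos--Moser iteration: testing with $u^{q-1}$ produces
\[
\frac{1}{q}\frac{d}{dt}\int_\Omega u^q+(q-1)\int_\Omega \gamma(v)u^{q-2}|\nabla u|^2=-(q-1)\int_\Omega u^{q-1}\gamma'(v)\nabla u\cdot\nabla v,
\]
where Young's inequality absorbs the drift into the diffusion term at the cost of an integral controlled by $\|\nabla v\|_{L^r}$ with suitable $r$. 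Bootstrapping $\nabla v$ to higher $L^r$-spaces through the improved integrability of $u$ at each stage, then combining with a Gagliardo--Nirenberg interpolation, yields a recursive estimate along the geometric sequence $q_k=2^k p$. In standard Alikakos--Moser fashion this gives $\|u(\cdot,t)\|_{L^\infty(\Omega)}\leq C'$.

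Once $u$ is bounded, a final application of elliptic/parabolic regularity with bounded right-hand side gives $v\in W^{1,\infty}(\Omega)$ uniformly in $t$, so the blow-up criterion in Theorem~\ref{local} is violated and $T_{\mathrm{max}}=\infty$. Time-independence of the final bounds is preserved whenever $C$ is time-independent, since every step of the chain is autonomous. The principal obstacle lies in maintaining uniformity in time throughout the Moser scheme in the fully parabolic case $\varepsilon>0$: here $\nabla v$ is not a pointwise-in-$t$ functional of $u$ as in the elliptic case, so I would rely on maximal $L^p$-regularity on time strips of fixed length, with the dissipative zeroth-order term $v$ in the second equation absorbing the contribution of initial data, so that each iterated estimate remains time-global rather than merely local in time.
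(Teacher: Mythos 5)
Your proposal is correct and follows essentially the same route the paper intends: the paper omits the proof and refers to the standard bootstrap of \cite[Lemma 4.3]{Anh19}, which is precisely your chain of elliptic/parabolic regularity for $v$ (using $p>\frac{n}{2}$ to reach $L^\infty$), uniform non-degeneracy of $\gamma(v)$, an Alikakos--Moser iteration for $u$ interleaved with gradient bootstrapping for $v$, and the extensibility criterion of Theorem \ref{local}. No substantive gap.
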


	Next, as done in our previous work \cite{FJ19a,FJ19b}, we introduce an auxiliary variable $w(x,t)$, which is the unique non-negative solution of the following Helmholtz equation:
	\begin{equation*}
	\begin{cases}
	-\Delta w+w=u, &x\in\Omega,\;t>0,\\
	\partial_\nu w=0,&x\in\partial\Omega,\;t>0.
	\end{cases}
	\end{equation*} 
Now, we recall the following lemma given in \cite{Anh19} about estimates for the solution of Helmholtz equations. Let $a_+=\max\{a,0\}$. Then we have
	\begin{lemma}\label{lm2}
		Let $\Omega$ be  a smooth bounded domain in $\mathbb{R}^n$, $n\geq1$ and let $f\in C(\overline{\Omega})$ be a non-negative  function such that $\int_\Omega f dx>0$. If $z$ is a $C^2(\overline{\Omega})$ solution to
		\begin{equation}
		\begin{split}\label{helm}
		-\Delta z+z=f,\;\;x\in\Omega,\\
		\frac{\partial z}{\partial \nu}=0,\;\;x\in\partial\Omega,
		\end{split}
		\end{equation}then if $1\leq q< \frac{n}{(n-2)_+}$, there exists a positive constant $C=C(n,q,\Omega)$ such that
		\begin{equation}
		\|z\|_{L^q(\Omega)}\leq C\|f\|_{L^1(\Omega)}.
		\end{equation}
	\end{lemma}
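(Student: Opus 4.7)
The natural approach is a duality argument based on $L^p$-elliptic regularity for the Neumann Laplacian on $\Omega$. The key numerical observation is that the range $1 \leq q < n/(n-2)_+$ is precisely the condition that the conjugate exponent $q'$ satisfies $q' > n/2$, which is exactly the threshold for the Sobolev embedding $W^{2,q'}(\Omega) \hookrightarrow L^\infty(\Omega)$.

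For the trivial endpoint $q=1$, I would simply integrate \eqref{helm} over $\Omega$: the Neumann boundary condition kills $\int_\Omega \Delta z\,dx$, so $\int_\Omega z\,dx = \int_\Omega f\,dx$. Since $f \geq 0$ forces $z \geq 0$ by the weak maximum principle, this immediately yields $\|z\|_{L^1(\Omega)} = \|f\|_{L^1(\Omega)}$. For $1 < q < n/(n-2)_+$, I would proceed by duality: pick an arbitrary $\psi \in C^\infty(\overline\Omega)$ with $\|\psi\|_{L^{q'}(\Omega)} \leq 1$ and let $\varphi$ denote the unique classical solution of the dual Neumann problem $-\Delta\varphi + \varphi = \psi$ in $\Omega$ with $\partial_\nu \varphi = 0$ on $\partial\Omega$. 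Standard $L^{q'}$ Calder\'on--Zygmund regularity for the Neumann Laplacian on a smooth bounded domain gives $\|\varphi\|_{W^{2,q'}(\Omega)} \leq C\|\psi\|_{L^{q'}(\Omega)}$, and the Sobolev embedding then yields $\|\varphi\|_{L^\infty(\Omega)} \leq C$.

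Two integrations by parts, legitimate because both $z$ and $\varphi$ satisfy homogeneous Neumann conditions, produce the identity
\[
\int_\Omega z\psi\,dx = \int_\Omega z(-\Delta\varphi + \varphi)\,dx = \int_\Omega (-\Delta z + z)\varphi\,dx = \int_\Omega f\varphi\,dx,
\]
so that $\bigl|\int_\Omega z\psi\,dx\bigr| \leq \|f\|_{L^1(\Omega)}\|\varphi\|_{L^\infty(\Omega)} \leq C\|f\|_{L^1(\Omega)}$. Taking the supremum over all admissible $\psi$ via the $L^q$--$L^{q'}$ duality pairing then gives $\|z\|_{L^q(\Omega)} \leq C\|f\|_{L^1(\Omega)}$ with $C = C(n,q,\Omega)$, as claimed.

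The only point that actually demands care is invoking the $W^{2,q'}$ Neumann regularity estimate, which is classical on a $C^2$ domain but should be cited explicitly; the strict inequality $q' > n/2$ is crucial in order to land strictly below, not on, the Sobolev embedding threshold, and is precisely the reason the admissible range in the statement is open at $n/(n-2)_+$.
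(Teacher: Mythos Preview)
Your duality argument is correct: the conjugate exponent condition $q'>n/2$ is exactly what makes $W^{2,q'}(\Omega)\hookrightarrow L^\infty(\Omega)$ available, and the self-adjointness of the Neumann resolvent via two integrations by parts then converts the $W^{2,q'}$ estimate for the dual solution into the desired $L^1\to L^q$ bound. The $q=1$ endpoint is handled cleanly as you indicate.

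There is nothing to compare against, however, because the paper does not prove this lemma at all: it is simply quoted from \cite{Anh19} as a known preliminary (see the sentence introducing Lemma~\ref{lm2}). A common alternative route---and likely the one taken in the cited reference---is to use pointwise bounds on the Neumann Green's function for $-\Delta+I$, namely $|G(x,y)|\leq C|x-y|^{2-n}$ for $n\geq 3$ (with the obvious modifications for $n\leq 2$), and then apply Young's convolution inequality; this gives the same range $q<\frac{n}{(n-2)_+}$. Your duality approach is arguably cleaner in that it avoids any discussion of Green's function singularities and reduces everything to one black-box Calder\'on--Zygmund estimate, at the cost of requiring that estimate to be cited.
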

When $n=2$, we need the following result given in \cite[Lemma 3.3]{Wang20}, see also \cite[Lemma A.3]{TaoWin14}.
\begin{lemma}\label{lm2e}
	Let $\Omega\subset\mathbb{R}^2$ be a smooth bounded domain. For any  $f\in L^1(\Omega)$ such that\begin{equation}
		\|f\|_{L^1(\Omega)}= \Lambda,
	\end{equation}with some $\Lambda>0$, there is $C>0$ such that the solution of \eqref{helm} satisfies
	\begin{equation}
		\int_\Omega e^{Az}dx\leq C
	\end{equation}for any $0<A<\frac{4\pi}{\Lambda}.$
\end{lemma}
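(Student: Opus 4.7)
The plan is to represent the solution $z$ of the Helmholtz problem via the Neumann Green's function of $-\Delta+1$ and then to apply Jensen's inequality to the exponential. First, let $G(x,y)$ denote the Neumann Green's function for $-\Delta+1$ on $\Omega\subset\mathbb{R}^2$, so that
$$z(x)=\int_\Omega G(x,y)f(y)\,dy.$$
Standard potential theory gives the decomposition
$$G(x,y)=-\frac{1}{2\pi}\log|x-y|+H(x,y),$$
with $H\in L^\infty(\Omega\times\Omega)$: the leading singularity is that of the fundamental solution of $-\Delta$ on $\mathbb{R}^2$, while the regular part $H$ is continuous up to the boundary by Schauder estimates applied on the smooth domain $\Omega$. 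Also, $G\ge 0$ by the maximum principle for $-\Delta+1$, so $e^{A\Lambda G}$ is well defined.

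Next, apply Jensen's inequality with the probability measure $d\mu(y)=\Lambda^{-1}f(y)\,dy$ on $\Omega$. Writing $Az(x)=\int_\Omega A\Lambda\,G(x,y)\,d\mu(y)$ and using convexity of the exponential,
$$e^{Az(x)}\le \int_\Omega e^{A\Lambda G(x,y)}\,d\mu(y)=\frac{1}{\Lambda}\int_\Omega e^{A\Lambda G(x,y)}f(y)\,dy.$$
Integrating over $x\in\Omega$ and using Fubini yields
$$\int_\Omega e^{Az(x)}\,dx\le \frac{1}{\Lambda}\int_\Omega f(y)\left(\int_\Omega e^{A\Lambda G(x,y)}\,dx\right)dy \le \frac{C_1}{\Lambda}\int_\Omega f(y)\left(\int_\Omega |x-y|^{-A\Lambda/(2\pi)}\,dx\right)dy,$$
where $C_1:=e^{A\Lambda\|H\|_{L^\infty(\Omega\times\Omega)}}$. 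Since we are in dimension two, the inner integral $\int_\Omega |x-y|^{-s}dx$ is finite and uniformly bounded in $y\in\Omega$ precisely when $s<2$. Taking $s=A\Lambda/(2\pi)$, this corresponds exactly to the subcritical regime $A<4\pi/\Lambda$. Under this constraint, the total integral is bounded by a constant multiple of $\|f\|_{L^1(\Omega)}/\Lambda=1$, which yields the desired estimate with $C=C(\Omega,A,\Lambda)$.

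The main technical point is the construction and decomposition of $G$: existence of a Neumann Green's function for $-\Delta+1$ on a smooth bounded planar domain, together with the splitting off of the explicit logarithmic singularity, is classical, but one must verify that the regular part $H$ is genuinely bounded uniformly on $\overline{\Omega}\times\overline{\Omega}$ (including near the boundary and near the diagonal). This is the only step requiring care; beyond it, the argument is a clean application of Jensen's inequality plus the sharp threshold $s<2$ for integrability of $|x-y|^{-s}$ in $\mathbb{R}^2$, which produces the precise constant $4\pi/\Lambda$ stated in the lemma.
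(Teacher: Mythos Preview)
Your argument is correct and is in fact the classical Brezis--Merle approach. The paper itself does not give a proof of this lemma but simply cites \cite[Lemma~3.3]{Wang20} and \cite[Lemma~A.3]{TaoWin14}; the argument in those references is essentially the one you wrote: represent $z$ via the Neumann Green's function, split off the logarithmic singularity, apply Jensen's inequality with the probability measure $\Lambda^{-1}f\,dy$, and use that $|x-y|^{-s}$ is locally integrable in $\mathbb{R}^2$ exactly when $s<2$.

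Two small remarks. First, Jensen's inequality requires $f\geq 0$ so that $d\mu=\Lambda^{-1}f\,dy$ is a genuine probability measure; you use this implicitly but should state it, since the lemma as written only says $f\in L^1$. (In the paper's applications $f=u\geq 0$, so this is harmless.) Second, the inequality $e^{A\Lambda G(x,y)}\leq C_1|x-y|^{-A\Lambda/(2\pi)}$ is not literally true when $|x-y|>1$, but on a bounded domain one has $|x-y|\leq\mathrm{diam}(\Omega)$, so $|x-y|^{-A\Lambda/(2\pi)}$ is bounded below and the estimate holds after enlarging $C_1$; alternatively write $e^{A\Lambda G}\leq C_1(1+|x-y|^{-A\Lambda/(2\pi)})$. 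Neither point affects the substance of the proof.
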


Besides, a strictly positive uniform-in-time lower bound for $w=(I-\Delta)^{-1}[u](x,t)$ is given by the positivity of the Green function to the Helmholtz equation (\cite{ito}) and the mass conservation. See also \cite[Lemma 3.3]{Black}.
	\begin{lemma}\label{lbdw}
		Suppose $(u,v)$  is the classical solution of \eqref{chemo1} up to the maximal time of existence $T_{\mathrm{max}}\in(0,\infty]$. Then, there exists a strictly positive constant $w_*=w_*(n,\Omega,\|u_0\|_{L^1(\Omega)})$ such that for all $t\in(0,T_{\mathrm{max}})$, there holds
		\begin{equation*}
		\inf\limits_{x\in\Omega}w(x,t)\geq w_*.
		\end{equation*}
	\end{lemma}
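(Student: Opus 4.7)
The plan is to realize $w(\cdot,t)$ through the Green's function of the Neumann Helmholtz operator and use its strict positivity together with mass conservation. Let $G(x,y)$ denote the Green's function on $\overline\Omega\times\overline\Omega$ associated with $I-\Delta$ subject to homogeneous Neumann boundary conditions, so that
\begin{equation*}
w(x,t)=\int_\Omega G(x,y)\,u(y,t)\,dy,\qquad x\in\overline\Omega,\ t\in(0,T_{\mathrm{max}}).
\end{equation*}
By the result of Ito cited in the excerpt, $G(x,y)>0$ throughout $\overline\Omega\times\overline\Omega$, with the standard singularity $G(x,y)\to+\infty$ as $y\to x$ (logarithmic in dimension two and of order $|x-y|^{2-n}$ for $n\ge 3$). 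Since $u\ge 0$ and $\int_\Omega u(\cdot,t)\,dy=\|u_0\|_{L^1(\Omega)}>0$ by the mass conservation of Theorem~\ref{local}, the representation already yields $w(x,t)>0$ pointwise; the task is only to upgrade this to a uniform-in-$(x,t)$ lower bound.

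The key step is to establish
\begin{equation*}
c_0:=\inf_{(x,y)\in\overline\Omega\times\overline\Omega}G(x,y)>0.
\end{equation*}
To see this, pick a minimizing sequence $(x_k,y_k)$ in the compact set $\overline\Omega\times\overline\Omega$, extract a convergent subsequence $(x_k,y_k)\to(x_0,y_0)$, and examine two cases. If $x_0\ne y_0$, then $G$ is continuous at $(x_0,y_0)$ and strictly positive there, so $G(x_k,y_k)\to G(x_0,y_0)>0$. If instead $x_0=y_0$, the diagonal blow-up of $G$ forces $G(x_k,y_k)\to+\infty$, which contradicts the sequence being minimizing (note that $G$ is bounded away from zero, e.g.\ by $G(x_0,z)>0$ for any $z\ne x_0$, so the infimum is finite). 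Hence $c_0>0$.

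Once $c_0>0$ is in hand, the conclusion is immediate: for every $(x,t)\in\overline\Omega\times(0,T_{\mathrm{max}})$,
\begin{equation*}
w(x,t)=\int_\Omega G(x,y)\,u(y,t)\,dy\ge c_0\int_\Omega u(y,t)\,dy=c_0\,\|u_0\|_{L^1(\Omega)},
\end{equation*}
which is the desired bound with $w_*:=c_0\|u_0\|_{L^1(\Omega)}$, depending only on $n$, $\Omega$ and $\|u_0\|_{L^1(\Omega)}$.

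The only non-routine ingredient is the strict positivity of the Neumann Green's function $G$ on $\overline\Omega\times\overline\Omega$, which is the main obstacle but is already available from the cited work of Ito; the compactness step handling the diagonal singularity and the appeal to mass conservation are straightforward. An equivalent way to package the same argument, if one prefers to avoid an explicit Green's-function discussion, is to invoke the parabolic Harnack inequality (or the strong maximum principle) applied to the stationary problem $-\Delta w+w=u$ with $u\ge 0$, $\int u=\|u_0\|_{L^1}$, to conclude the uniform positive lower bound; but the Green's-function route is the most direct.
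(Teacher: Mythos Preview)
Your proof is correct and follows exactly the approach the paper indicates: the paper does not give a detailed argument but simply states that the bound follows from the positivity of the Green function for the Helmholtz equation (citing It\^o) together with mass conservation, referring to \cite[Lemma~3.3]{Black} for details. Your write-up merely fleshes out this sketch, adding the compactness argument for $\inf G>0$ that the paper leaves implicit.
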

Similarly, a strictly positive uniform-in-time lower bound for $v$ was given in \cite[Lemma 2.1]{FS2016} provided that $v_0$ is strictly positive in $\overline{\Omega}$.
	\begin{lemma}\label{lbdv}
		Assume that $(u_0,v_0)$ satisfies \eqref{ini}. If $(u,v)$ is the solution of \eqref{chemo1} in $\Omega \times (0,T)$,
		then  there exists some $v_* >0$ such that
		\begin{eqnarray*}
			\inf_{x\in \Omega} v(x,t ) \geq v_*>0\qquad
			\mbox{for all }t\in(0,T).
		\end{eqnarray*}
		Here the constant $v_*$ is independent of $T>0$.
	\end{lemma}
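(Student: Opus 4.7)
The plan is to treat the cases $\e=0$ and $\e>0$ separately.

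\emph{Case $\e=0$.} Here the second equation of \eqref{chemo1} reduces to the Helmholtz equation $-\Delta v+v=u$ with $\partial_\nu v=0$, which is exactly the problem defining the auxiliary function $w$ in Lemma \ref{lbdw}. Hence $v\equiv w$ pointwise, and the desired lower bound $v_*=w_*>0$ is immediate from Lemma \ref{lbdw}, depending only on $n$, $\Omega$ and $\|u_0\|_{L^1(\Omega)}$.

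\emph{Case $\e>0$.} Normalizing $\e=1$, the equation $v_t=\Delta v-v+u$ is linear parabolic in $v$ with nonnegative source $u$, so Duhamel's formula gives
\begin{equation*}
v(\cdot,t)=e^{-t}\,\mathrm{e}^{t\Delta_N}v_0+\int_0^t e^{-(t-s)}\,\mathrm{e}^{(t-s)\Delta_N}[u(\cdot,s)]\,ds,
\end{equation*}
where $\mathrm{e}^{\tau\Delta_N}$ denotes the Neumann heat semigroup on $\Omega$, with kernel $k_\tau(x,y)$. I would argue separately in the short-time and long-time regimes. For short times $t\in[0,T_0]$, a simple comparison with the spatially constant subsolution $\underline v(t)=e^{-t}\min_{\overline\Omega}v_0$ (which is lawful since $u\geq0$ and $v_0>0$ on $\overline\Omega$ by \eqref{ini}) yields $v(x,t)\geq e^{-T_0}\min_{\overline\Omega}v_0>0$. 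For large times, I would invoke the spectral expansion
$$k_\tau(x,y)=\frac{1}{|\Omega|}+\sum_{j\geq 1}e^{-\lambda_j\tau}\phi_j(x)\phi_j(y),\qquad\lambda_1>0,$$
to fix $\tau_0=\tau_0(\Omega)>0$ with $k_\tau(x,y)\geq(2|\Omega|)^{-1}$ for all $\tau\geq\tau_0$ and all $x,y\in\overline\Omega$. Restricting the Duhamel integral to $s\in[0,t-\tau_0]$ and using mass conservation $\int_\Omega u(\cdot,s)\,dx=\|u_0\|_{L^1(\Omega)}$ (from Theorem \ref{local}) would then give, for $t\geq 2\tau_0$,
\begin{equation*}
v(x,t)\geq\frac{\|u_0\|_{L^1(\Omega)}}{2|\Omega|}\bigl(e^{-\tau_0}-e^{-t}\bigr)\geq\frac{\|u_0\|_{L^1(\Omega)}}{2|\Omega|}\bigl(e^{-\tau_0}-e^{-2\tau_0}\bigr)>0.
\end{equation*}
Choosing $T_0=2\tau_0$ and letting $v_*$ be the minimum of the two positive lower bounds furnishes the desired time-independent constant. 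The general case $\e>0$ follows upon rescaling $t\mapsto t/\e$, so $\tau_0$ must be replaced by $\e\tau_0$ but the final constant remains independent of $T$.

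The main technical point will be the uniform pointwise kernel bound $k_\tau(x,y)\geq c_\Omega>0$ for $\tau\geq\tau_0$, which I have taken on faith above. On a smooth bounded domain this is a standard consequence of the spectral decomposition combined with the uniform-in-$x$ convergence of the Neumann eigenfunction series for $\tau$ bounded away from zero, or equivalently of the Gaussian lower bound for the Neumann heat kernel; either route amounts to a non-trivial but well-documented fact about the semigroup $\mathrm{e}^{\tau\Delta_N}$.
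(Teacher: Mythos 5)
Your proposal is correct and follows essentially the same route as the proof in the cited source \cite[Lemma 2.1]{FS2016} (the paper itself only cites this lemma rather than proving it): the $\e=0$ case is the Helmholtz lower bound of Lemma \ref{lbdw}, and the $\e>0$ case is the standard Duhamel representation combined with the uniform positive lower bound for the Neumann heat kernel $k_\tau$ for $\tau$ bounded away from zero, together with mass conservation. The kernel bound you take on faith is indeed the well-documented crux, and your short-time comparison with the spatially constant subsolution $e^{-t}\min_{\overline\Omega}v_0$ correctly uses the hypothesis $v_0>0$ on $\overline\Omega$ from \eqref{ini}.
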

By the comparison method developed in our previous work, we proved the following upper bounds for $w$ and $v$ (see \cite[Lemma 3.1]{FJ19a} and \cite[Lemma 4.1, Lemma 4.3 \& Remark 4.1]{FJ19b}).
\begin{lemma}\label{keylem1}Assume $n\geq1$ and suppose that $\gamma$ satisfies  $(\mathrm{A0})$.  For any $0<t<T_{\mathrm{max}}$, there holds
	\begin{equation}\label{keyid}
	w_t+\gamma(v)u=(I-\Delta)^{-1}[\gamma(v)u].
	\end{equation}
	Moreover, for  any $x\in\Omega$ and  $t\in[0,T_{\mathrm{max}})$, we have
	\begin{equation}\label{ptesta}
	w(x,t)\leq w_0(x)e^{\gamma(v_*)t}.
	\end{equation}
\end{lemma}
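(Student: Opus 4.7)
The plan is to establish the two assertions in sequence. For the identity \eqref{keyid}, I would rewrite the first PDE in \eqref{chemo1} as
\begin{equation*}
u_t = \Delta(\gamma(v)u) = -(I-\Delta)[\gamma(v)u] + \gamma(v)u,
\end{equation*}
and then apply the operator $(I-\Delta)^{-1}$ (the solution operator of the Helmholtz problem with homogeneous Neumann data) to both sides. Since $(I-\Delta)^{-1}$ has time-independent coefficients and domain, it commutes with $\partial_t$; and by definition of $w$ one has $(I-\Delta)^{-1}u = w$, so the computation gives
\begin{equation*}
w_t = (I-\Delta)^{-1}u_t = -\gamma(v)u + (I-\Delta)^{-1}[\gamma(v)u],
\end{equation*}
which is exactly \eqref{keyid}. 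The commutation of $\partial_t$ with $(I-\Delta)^{-1}$ is justified by the $C^{2,1}$-regularity of $u$ granted by Theorem \ref{local}, together with standard Neumann elliptic regularity theory.

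For the pointwise bound \eqref{ptesta}, I would exploit the positivity-preserving property of $(I-\Delta)^{-1}$ under Neumann boundary conditions, which is a direct consequence of the strong maximum principle (equivalently, of the strict positivity of the Green's function underlying Lemma \ref{lbdw}). By Lemma \ref{lbdv} the solution satisfies a strictly positive time-independent lower bound $v(x,t)\geq v_*>0$, and by the monotonicity in $\mathrm{(A0)}$ this implies $\gamma(v(x,t))\leq \gamma(v_*)$ pointwise. Combining this with $u\geq 0$ and the order-preserving character of $(I-\Delta)^{-1}$, I would obtain
\begin{equation*}
0 \leq (I-\Delta)^{-1}[\gamma(v)u] \leq \gamma(v_*)\,(I-\Delta)^{-1}[u] = \gamma(v_*)\,w.
\end{equation*}
Plugging this into \eqref{keyid} and dropping the nonnegative term $\gamma(v)u$ on the left-hand side yields, for every fixed $x\in\Omega$, the pointwise ODE inequality $\partial_t w(x,t) \leq \gamma(v_*)\,w(x,t)$. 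A Gronwall integration in $t$, pointwise in $x$, with the initial value $w(x,0) = w_0(x)$ then produces \eqref{ptesta}.

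I do not anticipate any serious obstacle: both parts follow from the hidden cancellation structure of the cell equation once it is processed by $(I-\Delta)^{-1}$, combined with the monotonicity in $\mathrm{(A0)}$ and the positivity of the elliptic inverse. The only mild technical point is the interchange of $\partial_t$ and $(I-\Delta)^{-1}$, which is immediate from the classical regularity in Theorem \ref{local}, and the use of the strictly positive lower bound $v_*$ from Lemma \ref{lbdv} to turn a pointwise monotonicity into a uniform constant in the exponent.
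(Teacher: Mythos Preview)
Your proposal is correct and follows precisely the approach the paper attributes to \cite{FJ19a,FJ19b}: apply $(I-\Delta)^{-1}$ to the first equation to obtain \eqref{keyid}, then use the comparison principle for the Helmholtz operator together with $\gamma(v)\le\gamma(v_*)$ and a pointwise Gronwall argument to deduce \eqref{ptesta}. The only minor remark is that the lower bound $v\ge v_*$ should be invoked via Lemma~\ref{lbdw} when $\varepsilon=0$ (since then $v=w$) and via Lemma~\ref{lbdv} when $\varepsilon>0$, but this does not affect the argument.
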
	
	
	\begin{lemma}\label{vbd}
	 Assume that $\e>0$. Suppose $\gamma$ satisfies  $(\mathrm{A0})$ and the following asymptotic property:
	\begin{equation}\label{A1'}
	\mathrm{(A1')}:\lim\limits_{s\rightarrow+\infty}\gamma(s)<1/\e.
	\end{equation} 
	Then there exist $K>0$ depending on $\gamma$, $\e$ and the initial data and a generic constant $\tilde{C}>0$ independent of $\gamma$ such that for all $(x,t)\in\Omega\times[0,T_{\mathrm{max}})$,
		\begin{equation}\label{vbound}
		v(x,t)\leq \tilde{C}\bigg(w(x,t)+K\bigg).
		\end{equation}
	Furthermore, if $\gamma$ satisfies 	$\mathrm{(A1)}$ instead of $\mathrm{(A1')}$, then $\tilde{C}$ can be chosen as an arbitrary constant larger than $1$. 
	\end{lemma}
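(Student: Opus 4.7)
The plan is to construct a pointwise supersolution of the form $\bar v(x,t)=\tilde C\,w(x,t)+K$ for suitably chosen constants $\tilde C>1$ and $K>0$, and then derive the bound by a parabolic comparison argument. Using the Helmholtz relation $-\Delta w+w=u$ together with the key identity \eqref{keyid} from Lemma \ref{keylem1}, a direct computation gives
\begin{equation*}
\e\bar v_t-\Delta\bar v+\bar v=\tilde C(\e w_t+u)+K=\tilde C\e\,(I-\Delta)^{-1}[\gamma(v)u]+\tilde C\bigl(1-\e\gamma(v)\bigr)u+K.
\end{equation*}
Subtracting the equation $\e v_t-\Delta v+v=u$, the difference $P:=\bar v-v$ satisfies
\begin{equation*}
\e P_t-\Delta P+P=\tilde C\e\,(I-\Delta)^{-1}[\gamma(v)u]+\bigl[\tilde C(1-\e\gamma(v))-1\bigr]u+K
\end{equation*}
in $\Omega\times(0,T_{\mathrm{max}})$, with $\partial_\nu P|_{\pO}=0$.

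I would then fix the constants as follows. By $\mathrm{(A1')}$, choose $\eta\in(0,1)$ with $\lim_{s\to\infty}\gamma(s)\le\eta/\e$; under the stronger $\mathrm{(A1)}$ the value $\eta$ may be taken arbitrarily small. Select any $\tilde C>1/(1-\eta)$, which under $\mathrm{(A1)}$ can be pushed arbitrarily close to $1$, yielding the final statement of the lemma. By monotonicity of $\gamma$ from $\mathrm{(A0)}$, there exists $M=M(\gamma,\e,\tilde C)>0$ such that $\gamma(s)\le(1-1/\tilde C)/\e$ for all $s\ge M$. Finally take $K\ge\max\{M,\,\|v_0\|_{L^\infty(\Omega)}\}$, which depends only on $\gamma,\e$ and the initial data and guarantees the initial inequality $P(\cdot,0)=\tilde Cw_0+K-v_0\ge 0$.

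To conclude, I apply the parabolic minimum principle to $P$. Suppose for contradiction that $P$ attains a negative value; by continuity and the choice of $K$ at $t=0$, there is a first point $(x_0,t_0)\in\overline\Omega\times(0,T_{\mathrm{max}})$ at which $P(x_0,t_0)=\min P<0$. At such a point $\e P_t\le 0$ and $-\Delta P\le 0$ (using Hopf's lemma to rule out boundary extrema because of the Neumann condition), so the left-hand side of the equation for $P$ is strictly negative. However, the right-hand side is nonnegative: if $v(x_0,t_0)\ge M$ then $\tilde C(1-\e\gamma(v))-1\ge 0$ and every term on the right is $\ge 0$, giving at least $K>0$; if instead $v(x_0,t_0)<M$, then $P(x_0,t_0)<0$ forces $v(x_0,t_0)>\bar v(x_0,t_0)\ge K\ge M$, a contradiction. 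Hence $P\ge 0$ throughout $\overline\Omega\times[0,T_{\mathrm{max}})$, that is $v\le\tilde Cw+K$, which is \eqref{vbound} after relabelling $K\mapsto K/\tilde C$. The principal obstacle is precisely the regime where $v$ is small and $\gamma(v)$ may exceed the critical threshold $(1-1/\tilde C)/\e$, making the coefficient of $u$ negative; the key observation that makes the argument work is that a negative extremum of $P$ is itself incompatible with $v<M$ once $K\ge M$, so this dangerous regime is automatically avoided at the hypothetical minimum.
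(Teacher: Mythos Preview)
Your proof is correct and follows essentially the same route as the paper (which cites \cite[Lemma~4.3 \& Remark~4.1]{FJ19b}): you build the supersolution $\bar v=\tilde C\,w+K$ via the key identity \eqref{keyid} and the Helmholtz relation, then conclude by the parabolic comparison principle, with the crucial observation that a negative minimum of $P=\bar v-v$ forces $v>K\ge M$ and hence $\tilde C(1-\e\gamma(v))-1\ge 0$. The only point worth tightening is the boundary case of the pointwise minimum argument; a clean alternative that sidesteps Hopf's lemma is to test the equation for $P$ against $P_-=\max(-P,0)$ and use that the right-hand side is $\ge K>0$ on $\{P<0\}$, which yields $\frac{\e}{2}\frac{d}{dt}\int_\Omega P_-^2+\int_\Omega|\nabla P_-|^2+\int_\Omega P_-^2\le 0$ and hence $P_-\equiv 0$.
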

Finally, we need  the following uniform Gronwall inequality \cite[Chapter III, Lemma 1.1]{Temam} to deduce uniform-in-time estimates for the solutions.
\begin{lemma}\label{uniformGronwall}
	Let $g,h,y$ be three positive locally integrable functions on $(t_0,\infty)$ such that $y'$ is locally integrable on $(t_0,\infty)$ and the following inequalities are satisfied:
	\begin{equation*}
	y'(t)\leq g(t)y(t)+h(t)\;\;\forall\;t\geq t_0,
	\end{equation*}
	\begin{equation*}
	\int_t^{t+r}g(s)ds\leq a_1,\;\;\int_t^{t+r}h(s)ds\leq a_2,\;\;\int_t^{t+r}y(s)ds\leq a_3,\;\;\forall \;t\geq t_0,
	\end{equation*}	where $r,a_i$, $(i=1,2,3)$ are positive constants. Then
	\begin{equation*}
	y(t+r)\leq \left(\frac{a_3}{r}+a_2\right)e^{a_1},\;\;\forall t\geq t_0.
	\end{equation*}
\end{lemma}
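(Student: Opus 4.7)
The plan is to combine the standard (differential) Gronwall lemma applied on a sub-interval of length less than $r$ with an averaging over the starting point of that sub-interval, so that the hypothesis $\int_t^{t+r}y(s)\,ds \leq a_3$ can be inserted in place of a pointwise bound on $y$. The clever ingredient of Temam's argument is precisely this averaging trick: since we have no initial value control on $y$, we instead let the "initial time" vary and average the resulting pointwise estimate.

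Concretely, I would first fix $t \geq t_0$ and an auxiliary variable $s \in (t,t+r)$, and apply the classical Gronwall lemma to the differential inequality $y'(\tau) \leq g(\tau)y(\tau)+h(\tau)$ on the interval $[s,t+r]$, which yields
\begin{equation*}
y(t+r) \;\leq\; y(s)\exp\!\left(\int_s^{t+r} g(\sigma)\,d\sigma\right) \;+\; \int_s^{t+r} h(\tau)\exp\!\left(\int_\tau^{t+r} g(\sigma)\,d\sigma\right) d\tau.
\end{equation*}
Using $s \geq t$ and the hypothesis $\int_t^{t+r}g \leq a_1$, both exponentials are bounded by $e^{a_1}$. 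Similarly $\int_s^{t+r} h(\tau)\,d\tau \leq a_2$. Therefore
\begin{equation*}
y(t+r) \;\leq\; e^{a_1}\, y(s) \;+\; a_2\, e^{a_1} \qquad \text{for every } s \in (t,t+r).
\end{equation*}

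The last step is to integrate this pointwise inequality with respect to $s$ over $(t,t+r)$ and divide by $r$. Since the left-hand side does not depend on $s$, it simply gets multiplied by $r$, while the integral of $y(s)$ on $(t,t+r)$ is controlled by $a_3$ via the third hypothesis. This gives
\begin{equation*}
r\, y(t+r) \;\leq\; e^{a_1}\!\int_t^{t+r} y(s)\,ds \;+\; r\, a_2\, e^{a_1} \;\leq\; e^{a_1} a_3 + r\, a_2\, e^{a_1},
\end{equation*}
and dividing by $r$ yields the claimed estimate $y(t+r) \leq \bigl(\tfrac{a_3}{r}+a_2\bigr)e^{a_1}$.

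I do not expect a serious obstacle here: the only regularity requirement to make the classical Gronwall step rigorous is that $y$ is absolutely continuous on compact subintervals of $(t_0,\infty)$, which is guaranteed by the hypothesis that $y'$ is locally integrable, and the Fubini-type exchange used when integrating the pointwise bound in $s$ is justified by the local integrability of $y$. The slightly subtle point worth highlighting in the write-up is that the averaging in $s$ is what allows us to avoid assuming any bound on $y$ at a single point, which is exactly the feature that makes this "uniform" version strictly more useful than the classical Gronwall lemma in the PDE applications further on in the paper.
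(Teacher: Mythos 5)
Your proof is correct and is exactly the classical argument: the paper itself does not prove this lemma but quotes it from Temam (Chapter III, Lemma 1.1), and your combination of the standard Gronwall estimate on $[s,t+r]$ with averaging over $s\in(t,t+r)$ is precisely the proof given in that reference. The regularity remark (local absolute continuity of $y$ coming from the local integrability of $y'$, and positivity of $g,h$ to bound the exponentials and $\int_s^{t+r}h$ by $a_1$, $a_2$) is all that is needed, so there is nothing to add.
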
	
\section{Time-independent upper bounds of $v$}
This section is devoted to the following uniform-in-time boundedness result of $v$. In the two-dimensional case, the proof is based on a simple application of  the 2D Sobolev embeddings together with the uniform Gronwall inequality while in higher dimensions, the boundedness is achieved via a modified Alikakos--Moser type iteration argument.
\subsection{The two-dimensional case}
In two dimensions, it was proved in \cite{FJ19a,FJ19b} that global classical solution always exists provided that $\gamma$ satisfies  $\mathrm{(A0)}$ if $\e=0$ and additionally  $\mathrm{(A1)}$ if $\e>0$. In order to establish the boundedness, we first prove the following result.
\begin{lemma}\label{wH1a}
	Under the same assumptions of Theorem \ref{TH0}, there is $C>0$ depending only on  the initial data, $\gamma$, $\e$ and $\Omega$ such that
	\begin{equation}
	\sup\limits_{t\geq0}\left(\|\nabla w\|_{L^2(\Omega)}+\|w\|_{L^2(\Omega)}+\int_t^{t+1}\int_\Omega\gamma(v)u^2dxds\right)\leq C.
	\end{equation}
\end{lemma}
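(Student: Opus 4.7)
My approach is to combine the key identity of Lemma \ref{keylem1} with the two-dimensional exponential integrability of $w$ provided by Lemma \ref{lm2e} and a quantitative pointwise lower bound on $\gamma(v)$ coming from (A2$'$) or (A2$''$). The resulting differential inequality for $y:=\|w\|_{H^1}^2$ is quadratic in $y$, and this dominant quadratic dissipation will give the uniform bound.

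First, since the conserved mass gives $\|u(\cdot,t)\|_{L^1(\Omega)}\equiv\|u_0\|_{L^1(\Omega)}$, Lemma \ref{lm2e} applied to $-\Delta w+w=u$ yields $\sup_{t\geq 0}\int_\Omega e^{Aw}\,dx\leq C(A)$ for every $A\in(0,4\pi/\|u_0\|_{L^1(\Omega)})$; consequently $\|w(\cdot,t)\|_{L^p(\Omega)}\leq C_p$ for all $p<\infty$ and $\sup_{t\geq 0}\int_\Omega w^2 e^{Aw}\,dx\leq C$ whenever $A$ is such an admissible value. Next I would test the key identity \eqref{keyid} against $u=-\Delta w+w$; after integration by parts using the Neumann conditions, the left-hand side produces $\tfrac12\frac{d}{dt}\|w\|_{H^1(\Omega)}^2+\int_\Omega\gamma(v)u^2\,dx$, while the right-hand side $\int_\Omega zu\,dx$ becomes $\int_\Omega\gamma(v)uw\,dx$ via self-adjointness of $(I-\Delta)^{-1}$. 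The latter is crudely bounded by $\gamma(v_*)\int_\Omega uw\,dx=\gamma(v_*)\|w\|_{H^1(\Omega)}^2$, using $v\geq v_*$ from Lemma \ref{lbdv} and the monotonicity of $\gamma$.

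The crucial lower bound on the dissipation comes from (A2$'$) or (A2$''$): continuity of $\gamma$ together with $v\geq v_*$ yields $c_0,\alpha>0$ with $\gamma(s)\geq c_0 e^{-\alpha s}$ for all $s\geq v_*$, where $\alpha$ can be taken arbitrarily small under (A2$'$) but must equal $\chi$ under (A2$''$). When $\e=0$ we have $v=w$; for $\e>0$, Lemma \ref{vbd} combined with (A1) gives $v\leq\tilde C(w+K)$ with $\tilde C>1$ arbitrarily close to $1$. In either situation we deduce $\gamma(v)\geq c_1 e^{-\tilde\alpha w}$ for suitable $c_1>0$, and in every configuration we can arrange $\tilde\alpha<4\pi/\|u_0\|_{L^1(\Omega)}$: under (A2$'$) by shrinking $\alpha$, and under (A2$''$) via the sub-critical mass hypothesis $\|u_0\|_{L^1(\Omega)}<4\pi/\chi$ together with the freedom to choose $\tilde C\in(1,4\pi/(\chi\|u_0\|_{L^1(\Omega)}))$ in the $\e>0$ case. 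Then Cauchy--Schwarz gives
\begin{equation*}
\|w\|_{H^1(\Omega)}^2=\int_\Omega uw\,dx\leq\left(\int_\Omega u^2 e^{-\tilde\alpha w}\,dx\right)^{1/2}\left(\int_\Omega w^2 e^{\tilde\alpha w}\,dx\right)^{1/2},
\end{equation*}
and the second factor is controlled uniformly by the first step. Combined with the preceding lower bound on $\gamma(v)$ this yields $\int_\Omega\gamma(v)u^2\,dx\geq c_2\|w\|_{H^1(\Omega)}^4$ for some $c_2>0$.

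Inserting both bounds into the base energy identity with $y(t):=\|w(\cdot,t)\|_{H^1(\Omega)}^2$ produces $\tfrac12 y'+c_2 y^2\leq\gamma(v_*)y$, so $y'<0$ whenever $y>\gamma(v_*)/c_2$. Hence $\sup_{t\geq 0}y(t)\leq\max\{y(0),\gamma(v_*)/c_2\}$, which is the desired uniform $H^1$ bound on $w$; a further integration of the same differential identity over $[t,t+1]$ and the just-established uniform bound for $y$ yields $\int_t^{t+1}\int_\Omega\gamma(v)u^2\,dx\,ds\leq C$. The main subtle point is the exponent-matching condition $\tilde\alpha<4\pi/\|u_0\|_{L^1(\Omega)}$: this is automatic under (A2$'$), but under (A2$''$) it is exactly where the sub-critical mass hypothesis is consumed, and in the fully parabolic case $\e>0$ it further hinges on the sharp flexibility $\tilde C\downarrow 1$ furnished by Lemma \ref{vbd}.
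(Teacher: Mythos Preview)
Your argument is correct and arrives at the same conclusion, but the route differs from the paper's in a pleasant, essentially dual way. Both proofs start from the same energy identity
\[
\tfrac12\,\frac{d}{dt}\|w\|_{H^1}^2+\int_\Omega\gamma(v)u^2=\int_\Omega\gamma(v)uw\le\gamma(v_*)\int_\Omega uw,
\]
and both rely on Lemma \ref{vbd} to pass from $v$ to $w$ and on Lemma \ref{lm2e} for the two-dimensional exponential integrability of $w$. The paper then splits $(2\gamma(v_*)+1)\int_\Omega uw$ by Young's inequality, absorbs $\int_\Omega\gamma(v)u^2$, and is left with $C\int_\Omega\gamma^{-1}(v)w^2$; using the \emph{upper} bound $\gamma^{-1}(s)\le be^{\alpha s}+\gamma^{-1}(s_b)$ from (A2$'$)/(A2$''$) and Lemma \ref{lm2e}, this term is shown to be bounded by a constant, yielding the linear ODI $y'+y\le C$. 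You instead keep the dissipation intact and exploit the \emph{lower} bound $\gamma(v)\ge c_1e^{-\tilde\alpha w}$: via Cauchy--Schwarz $\int_\Omega uw\le(\int_\Omega u^2e^{-\tilde\alpha w})^{1/2}(\int_\Omega w^2e^{\tilde\alpha w})^{1/2}$ and the uniform bound on the second factor, you obtain $\int_\Omega\gamma(v)u^2\ge c_2\|w\|_{H^1}^4$, hence the quadratic ODI $\tfrac12y'+c_2y^2\le\gamma(v_*)y$. The exponent-matching step $\tilde\alpha<4\pi/\|u_0\|_{L^1}$ is identical in both arguments, including the use of $\tilde C\downarrow1$ under (A1) for the $\e>0$, (A2$''$) case. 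The paper's linear ODI is marginally cleaner to integrate, while your version makes the quadratic coercivity of the dissipation explicit; neither approach requires more than the other.
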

\begin{proof}
		Multiplying the key identity \eqref{keyid} by $u$ and recalling that $w=(I-\Delta)^{-1}[u]$, we obtain that
	\begin{equation*}
	\begin{split}
	\frac{1}{2}\frac{d}{dt}\left(\|\nabla w\|_{L^2(\Omega)}^2+\|w\|_{L^2(\Omega)}^2\right)+\int_\Omega \gamma(v)u^2dx=&\int_\Omega (I-\Delta)^{-1}[\gamma(v)u]udx\\
	=&\int_\Omega \gamma(v)uwdx\\
	\leq&\gamma(v_*)\int_\Omega uwdx.
	\end{split}
	\end{equation*}
{
	On the other hand, by integrating by parts it follows 
		\begin{equation*}
	\|\nabla w\|_{L^2(\Omega)}^2+\|w\|_{L^2(\Omega)}^2=\int_\Omega w udx.
	\end{equation*} 
Combining the above inequalities, we have
	\begin{equation*}
	\begin{split}
	&\frac{d}{dt}\left(\|\nabla w\|_{L^2(\Omega)}^2+\|w\|_{L^2(\Omega)}^2\right)
	+\|\nabla w\|_{L^2(\Omega)}^2+\|w\|_{L^2(\Omega)}^2
	+2\int_\Omega \gamma(v)u^2dx\\
	=&(2\gamma(v_*)+1)\int_\Omega w udx\\
	\leq&\int_\Omega \gamma(v)u^2dx
	+\frac{(2\gamma(v_*)+1)^2}{4}\int_\Omega\gamma^{-1}(v)w^2dx.
	\end{split}
	\end{equation*} 
Thus we obtain that
	\begin{equation}\label{wb00}
	\begin{split}
	&\frac{d}{dt}\left(\|\nabla w\|_{L^2(\Omega)}^2+\|w\|_{L^2(\Omega)}^2\right)
+\|\nabla w\|_{L^2(\Omega)}^2+\|w\|_{L^2(\Omega)}^2
	+\int_\Omega \gamma(v)u^2dx\\
	&\leq C\int_\Omega\gamma^{-1}(v)w^2dx
	\end{split}
	\end{equation} 
with some $C>0$.
}
	In view of our assumption { $\mathrm{(A2')}$}, we may infer that for any  $b>0$, $\alpha>0$, there exists $s_b>v_*$ depending on $\alpha$ and $b$ such that  for all $s\geq s_b$
	\begin{equation*}
	\gamma^{-1}(s)\leq be^{\alpha s}
	\end{equation*}and on the other hand, since $\gamma(\cdot)$ is decreasing,
	\begin{equation*}
	\gamma^{-1}(s)\leq \gamma^{-1}(s_b)
	\end{equation*}for all $0\leq s<s_b$.
	Therefore, for all $s\geq0$, there holds
	\begin{equation}\label{cond_gamma0}
	\gamma^{-1}(s)\leq be^{\alpha s}+\gamma^{-1}(s_b).
	\end{equation}
	Thus, we deduce from above and Lemma \ref{vbd}
	that	\begin{equation}\label{unic0}
	\begin{split}
	\int_\Omega\gamma^{-1}(v)w^2dx\leq &\int_\Omega (be^{\alpha v}+\gamma^{-1}(s_b))w^2dx\\
	\leq&\int_\Omega \left(be^{\tilde{C}\alpha(w+K)}+\gamma^{-1}(s_b)\right)w^2dx.
	\end{split}
	\end{equation}	
Invoking Young's inequality, we observe that
\begin{equation*}
	\int_\Omega e^{\tilde{C}\alpha(w+K)}w^2dx\leq e^{\tilde{C}K\alpha}\left(\int_\Omega e^{2\tilde{C}\alpha w}dx\right)^{1/2}\left(\int_\Omega w^4dx\right)^{1/2},
\end{equation*}
{ and thus
\begin{equation*}
	\int_\Omega\gamma^{-1}(v)w^2dx\leq
e^{\tilde{C}K\alpha}\left(\int_\Omega e^{2\tilde{C}\alpha w}dx\right)^{1/2}\left(\int_\Omega w^4dx\right)^{1/2}
+ 	\gamma^{-1}(s_b)\int_\Omega w^2dx.
\end{equation*}
 Here we apply Lemma \ref{lm2e} by taking $\|u_0\|_{L^1(\Omega)}=\Lambda$ and sufficiently small $\alpha>0$ such that $2\tilde{C}\alpha<A$ (recall that $\tilde{C}$ is independent of $\gamma$), 
and also invoke Lemma \ref{lm2} to have}
	\begin{equation*}
		\begin{split}
		\int_\Omega\gamma^{-1}(v)w^2dx\leq C.
		\end{split}
	\end{equation*}
Combining \eqref{wb00} with the above estimate completes the proof by solving the above differential inequality. 
\end{proof}
\begin{remark}
	If $\e=0$, we have
		\begin{equation}
	\sup\limits_{t\geq0}\left(\|\nabla v\|_{L^2(\Omega)}+\|v\|_{L^2(\Omega)}+\int_t^{t+1}\int_\Omega\gamma(v)u^2dxds\right)\leq C.
	\end{equation}
\end{remark}
\begin{remark}\label{rmchi1}
	If there is $0<\chi<\frac{4\pi}{\Lambda}$ with $\|u_0\|_{L^1(\Omega)}=\Lambda$ such that \begin{equation}
	\label{A2''}\mathrm{(A2'')}:\qquad
	\lim\limits_{s\rightarrow+\infty}e^{\chi s}\gamma(s)=+\infty,
	\end{equation}
we can argue in the same manner as before to deduce that
\begin{equation*}
\begin{split}
\int_\Omega \gamma^{-1}(v)w^2\leq b\int_\Omega e^{\tilde{C}\chi(w+K)}w^2+\int_\Omega\gamma^{-1}(s_b)w^2.
\end{split}
\end{equation*}
Since $\lim\limits_{s\rightarrow0}\gamma(s)=0$, by Lemma \ref{vbd}, $\tilde{C}>1$ above can be chosen arbitrarily close to $1$ such that 
	\begin{equation*}
		\tilde{C}\chi<\frac{4\pi}{\Lambda}.
	\end{equation*}
Moreover, we may fix some $p>1$ such that 	
		\begin{equation*}
	\tilde{C}p\chi<\frac{4\pi}{\Lambda}.
	\end{equation*}
	Thus, thanks to Young's inequality, Lemma \ref{lm2} and Lemma \ref{lm2e}, we infer that
	\begin{equation*}
		\begin{split}
		\int_\Omega e^{\tilde{C}\chi(w+K)}w^2\leq e^{\tilde{C}K\chi}\left(\int_\Omega e^{\tilde{C}p\chi w}\right)^{1/p}\left(\int_\Omega w^{2p'}\right)^{1/{2p'}}\leq C
		\end{split}
	\end{equation*}where $1/p+1/{p'}=1$. Thus, if $\Lambda<\frac{4\pi}{\chi}$, there also holds	\begin{equation}
	\sup\limits_{t\geq0}\left(\|\nabla w\|_{L^2(\Omega)}+\|w\|_{L^2(\Omega)}+\int_t^{t+1}\int_\Omega\gamma(v)u^2dxds\right)\leq C.
	\end{equation}

\end{remark}

\begin{lemma}
	Under the assumption of Thereom \ref{TH0}, we have
	\begin{equation}
	\sup\limits_{t\geq0}\left(\|w\|_{L^\infty(\Omega)}+\|v\|_{L^\infty(\Omega)}\right)\leq C.
	\end{equation}
\end{lemma}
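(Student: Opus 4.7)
\medskip
\textbf{Plan.} The goal is to upgrade the spatial $H^1$ regularity of $w$ from Lemma \ref{wH1a} to a uniform-in-time $L^\infty$ bound; the $L^\infty$-bound on $v$ then follows immediately from Lemma \ref{vbd}. The strategy consists of three stages.

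First, I would exploit the Sobolev embedding $H^1(\Omega)\hookrightarrow L^p(\Omega)$ for every finite $p$ (valid in $n=2$). Together with Lemma \ref{wH1a} this gives $\sup_{t\ge0}\|w(\cdot,t)\|_{L^p(\Omega)}\le C_p$ for every $1\le p<\infty$. Combined with the pointwise comparison $v\le\tilde{C}(w+K)$ from Lemma \ref{vbd} and the Trudinger--Moser-type inequality Lemma \ref{lm2e} applied to $-\Delta w+w=u$, this yields a uniform exponential integrability $\sup_{t\ge0}\int_\Omega e^{\alpha v(\cdot,t)}dx\le C_\alpha$. The admissible range of $\alpha$ is governed by $\alpha\tilde{C}<4\pi/\|u_0\|_{L^1}$: under $(A2')$ the constant $\alpha$ may be arbitrarily small (because $\tilde{C}$ can be chosen arbitrarily close to $1$), while under $(A2'')$ the mass condition $\|u_0\|_{L^1}<4\pi/\chi$ is precisely what permits us to take $\alpha=\chi$, as was already observed in Remark \ref{rmchi1}.

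Second, I would combine this exponential integrability with $(A2')$ or $(A2'')$ --- under which $\gamma^{-1}(v)\le be^{\alpha v}+C_b$ for arbitrarily (resp.\ suitably) small $\alpha$ --- to derive a differential inequality
\begin{equation*}
	y'(t)\le g(t)y(t)+h(t)
\end{equation*}
for a quantity $y(t)$ that controls $\|u(\cdot,t)\|_{L^p(\Omega)}$ for some $p>\frac{n}{2}=1$, with $\int_t^{t+1}g$, $\int_t^{t+1}h$, $\int_t^{t+1}y$ uniformly bounded in $t$. A natural candidate is $y(t)=\|u(\cdot,t)\|_{L^2(\Omega)}^2$, reached by testing the first equation of \eqref{chemo1} against $u$ to produce a dissipation $\int_\Omega\gamma(v)|\nabla u|^2$ and a cross-term $\int_\Omega\frac{|\gamma'(v)|^2}{\gamma(v)}u^2|\nabla v|^2$ after Young's inequality. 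The required a priori integral control $\int_t^{t+1}\!\int_\Omega u^2\,dx\,ds\le C$ can be extracted from the Cauchy--Schwarz factorization $u^2=(\gamma(v)u^2)\cdot\gamma^{-1}(v)$, the time-integrated bound of Lemma \ref{wH1a}, and the exponential integrability of $v$ from Stage 1. The uniform Gronwall inequality (Lemma \ref{uniformGronwall}) then delivers $y(t)\le C$ for all $t\ge0$. Finally, since $2>\frac{n}{2}$ in two dimensions, Lemma \ref{criterion} concludes the uniform-in-time $L^\infty$-bound of $u$, and hence of $w$ and of $v$.

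\textbf{Main obstacle.} The delicate stage is the second one. Because $(A2')$ only rules out pure exponential decay of $\gamma$, the weight $\gamma^{-1}(v)$ may grow sub-exponentially, and no pointwise bound of $|\gamma'|^2/\gamma$ in terms of $\gamma''$ (the essence of the higher-dimensional hypotheses $(A3a)$, $(A3u)$, $(A3b)$) is available. Closing the differential inequality therefore requires that every factor of $\gamma^{-1}(v)$ appearing in the cross-terms be absorbed against a quantity carrying enough exponential integrability in $v$ (equivalently in $w$) for Lemma \ref{lm2e} to apply. It is exactly the sharp threshold $4\pi/\|u_0\|_{L^1}$ from Lemma \ref{lm2e}, coupled with the flexibility $\tilde{C}>1$ arbitrary in Lemma \ref{vbd}, that permits this absorption under $(A2')$ without any smallness of mass, while forcing the critical-mass restriction $\|u_0\|_{L^1}<4\pi/\chi$ under $(A2'')$.
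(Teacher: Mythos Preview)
Your Stage~2 contains two genuine gaps. First, the pointwise factorization $u^2=(\gamma(v)u^2)\,\gamma^{-1}(v)$ does not deliver $\int_t^{t+1}\!\int_\Omega u^2\le C$: pairing it with Lemma~\ref{wH1a} would require $\gamma^{-1}(v)\in L^\infty_{t,x}$, whereas Lemma~\ref{lm2e} only places $\gamma^{-1}(v)$ in $L^p_x$ for finite $p$. A H\"older split gives $\int_t^{t+1}\|u\|_{L^p}^p\le C$ for each $p<2$, but the endpoint $p=2$ is precisely what your uniform Gronwall argument for $y=\|u\|_{L^2}^2$ needs and does not follow. Second, and more seriously, the cross term $\int_\Omega\frac{|\gamma'(v)|^2}{\gamma(v)}u^2|\nabla v|^2$ cannot be cast as $g(t)y(t)+h(t)$ with controllable $g,h$: hypothesis~$\mathrm{(A2')}$ constrains only the decay rate of $\gamma$, saying nothing about $|\gamma'|^2/\gamma$, and no a~priori bound on $\nabla v$ strong enough to close the estimate is available before $v$ itself is shown to be bounded. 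Your ``Main obstacle'' paragraph diagnoses the delicacy correctly but the proposed remedy---absorbing $\gamma^{-1}(v)$ factors via exponential integrability---does not touch the $|\nabla v|^2$ factor.

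The paper's argument sidesteps both problems by dropping the $L^2$-energy of $u$ altogether and working \emph{pointwise} with $w$. From the key identity one has $w_t\le\gamma(v_*)w$ at every fixed $x\in\Omega$, a scalar inequality free of any gradient. The only input the uniform Gronwall lemma then needs is $\int_t^{t+1}w(x,s)\,ds\le C$, and this follows from the two-dimensional elliptic estimate $\|w\|_{L^\infty}\le C\|u\|_{L^p}$ (valid for any $p>1$) combined with the H\"older factorization
\[
\|u\|_{L^p}\le\Big(\int_\Omega\gamma(v)u^2\Big)^{1/2}\Big(\int_\Omega\gamma(v)^{-\frac{p}{2-p}}\Big)^{\frac{2-p}{2p}},
\]
the second factor being uniformly bounded by Lemma~\ref{lm2e} once $p\in(1,2)$ is fixed and $\alpha$ chosen small enough, and the first controlled in $L^1_t$ by Lemma~\ref{wH1a}. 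No derivative of $\gamma$ and no gradient of $v$ ever appears.
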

\begin{proof}
	Recall that $w-\Delta w=u$. For any fixed $1<p<2$, we infer by the Sobolev embedding theorem and \eqref{cond_gamma0} that
	\begin{align*}
	\|w\|_{L^\infty(\Omega)}\leq &C\|u\|_{L^p(\Omega)}\\
	\leq&\left(\int_\Omega u^2\gamma(v)dx\right)^{\frac12}\left(\int_\Omega (\gamma(v))^{-\frac{p}{2-p}}dx\right)^{\frac{2-p}{2p}}\\
	\leq&C\left(\int_\Omega u^2\gamma(v)dx\right)^{\frac12}\left(\int_\Omega \left(be^{\tilde{C}\alpha (w+K)}+\gamma^{-1}(s_b)\right)^{\frac{p}{2-p}}dx\right)^{\frac{2-p}{2p}}.
	\end{align*}
Picking $\alpha>0$ small such that $\frac{\tilde{C}p\alpha}{2-p}<A$, we deduce by Lemma \ref{lm2e} that
\begin{equation}
	\|w\|_{L^\infty(\Omega)}\leq C\left(\int_\Omega u^2\gamma(v)dx\right)^{\frac12}.
\end{equation}	
	Then by Lemma \ref{wH1a}, for any $t>0$ we obtain that
	\begin{equation*}
	\int_t^{t+1}\|w\|_{L^\infty(\Omega)}ds\leq \int_t^{t+1}\int_\Omega u^2\gamma(v)dx+C\leq C
	\end{equation*}
	and thus for any fixed $x\in\Omega$,
	\begin{equation*}
	\sup\limits_{t>0}\int_t^{t+1}w(s,x)ds\leq C
	\end{equation*}
	with $C>0$ depending only on the initial data, $\gamma$ and $\Omega$. Finally, 
	observing that
	\begin{equation}\non
	w_t+u\gamma(v)=(I-\Delta)^{-1}[u\gamma(v)]\leq \gamma(v_*)(I-\Delta)^{-1}[u]=\gamma(v_*)w,
	\end{equation}
	we may apply the uniform Gronwall inequality Lemma \ref{uniformGronwall} to obtain that for any $x\in\Omega$
	\begin{equation*}
	w(x,t)\leq C\qquad\text{for}\;\;t\geq1,
	\end{equation*}with some $C>0$ independent of $x\in\Omega$, 
{ 	which together with Lemma \ref{keylem1} for $t\leq1$ gives rise to the following estimate} 
	\begin{equation*}
	w(x,t)\leq C\qquad\text{for}\;\;t\geq0.
	\end{equation*}
Finally, recall Lemma \ref{vbd}, we also have
	\begin{equation*}
	v(x,t)\leq C(w(x,t)+1)\leq C\qquad\text{for}\;\;t\geq0,
	\end{equation*}which  concludes the proof.
\end{proof}
\begin{remark}
	Under the same assumption of Remark \ref{rmchi1}, we can choose some $1<p<2$ such that $\frac{p}{2-p}$ larger than $1$ but  sufficiently close to $1$ such that
	\begin{equation*}
		\frac{\tilde{C}p\chi}{2-p}<\frac{4\pi}{\Lambda}.
	\end{equation*}Then, in the same manner as before, we can still deduce that 
	\begin{equation*}
	\begin{split}
	\|w\|_{L^\infty(\Omega)}\leq& C\|u\|_{L^p(\Omega)}\\
	\leq&\left(\int_\Omega u^2\gamma(v)dx\right)^{\frac12}\left(\int_\Omega (\gamma(v))^{-\frac{p}{2-p}}dx\right)^{\frac{2-p}{2p}}\\
	\leq&C\left(\int_\Omega u^2\gamma(v)dx\right)^{\frac12}\left(\int_\Omega \left(be^{\tilde{C}\chi (w+K)}+\gamma^{-1}(s_b)\right)^{\frac{p}{2-p}}dx\right)^{\frac{2-p}{2p}}\\
	\leq& C\left(\int_\Omega u^2\gamma(v)dx\right)^{\frac12}.
	\end{split}
	\end{equation*}Then, we can similarly prove that 
	\begin{equation*}
		\sup\limits_{t\geq0}\left(\|w\|_{L^\infty(\Omega)}+\|v\|_{L^\infty(\Omega)}\right)\leq C.
	\end{equation*}
\end{remark}
Once $v$ is uniformly-in-time bounded from above, we can prove in the same manner as in \cite{FJ19a,FJ19b} to get the uniform boundedness of the classical solutions and thus Theorem \ref{TH0} is proved. We omit the detail here.

\subsection{Higher-dimensional cases}
In this part, we aim to establish uniform-in-time upper bound for $v$ in higher dimensions when $\gamma$ decreases algebraically at large concentrations.
\begin{proposition}\label{Propunbv}
Assume $n\geq3$. Suppose $\gamma$ satisfies $\mathrm{(A0)}$ and $\mathrm{(A2)}$ with some $0<k<\frac{2}{n-2}$ when $\e=0$, and $\gamma$ satisfies  $(\mathrm{A1'})$ additionally  when $\e>0$. Then there is $C>0$ depending only on the initial data, $\gamma$, { $\e$} and $\Omega$ such that
\begin{equation}
	\sup\limits_{0\leq t<T_{\mathrm{max}}} \|v(\cdot,t)\|_{L^\infty(\Omega)}\leq C.
\end{equation}
\end{proposition}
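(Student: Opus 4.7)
My approach is a modified Alikakos--Moser iteration for $\int_\Omega w^p\,dx$ (with $w=v$ when $\varepsilon=0$), based on the key identity $w_t+\gamma(v)u=(I-\Delta)^{-1}[\gamma(v)u]$ of Lemma \ref{keylem1}, the regularizing estimate of Lemma \ref{lm2}, and two pointwise inputs. First, mass conservation combined with $v\geq v_*>0$ (Lemma \ref{lbdv}) gives $\|\gamma(v)u\|_{L^1}\leq\gamma(v_*)M$, so Lemma \ref{lm2} yields $\|(I-\Delta)^{-1}[\gamma(v)u]\|_{L^q}\leq C$ uniformly in $t$ for every $q<n/(n-2)$. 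Second, assumption $\mathrm{(A2)}$ delivers the pointwise comparison $\gamma(s)\geq c(1+s)^{-k}$; in the fully parabolic case this upgrades to $\gamma(v)\geq c(1+w)^{-k}$ after invoking $v\leq\tilde{C}(w+K)$ from Lemma \ref{vbd}, so both cases can be treated in parallel. Finally, applying Lemma \ref{lm2} to the elliptic identity $u=(I-\Delta)w$ furnishes the starting bound $\|w\|_{L^{p_0}}\leq C$ for some $p_0\in(1+k,\,n/(n-2))$, an interval that is non-empty \emph{precisely because} $k<2/(n-2)$.

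\textbf{Central differential inequality.} I would multiply the key identity by $w^{p-1}$ for $p>1+k$, write $u=w-\Delta w$, use $\gamma(v)\geq c(1+w)^{-k}$, and integrate by parts under the Neumann condition. This produces the strictly positive dissipation
\[
\int_\Omega\gamma(v)u\,w^{p-1}\,dx\geq c\int_\Omega w^p(1+w)^{-k}\,dx+c\int_\Omega w^{p-2}(1+w)^{-k-1}\bigl[(p-1)+(p-1-k)w\bigr]|\nabla w|^2\,dx,
\]
whose $|\nabla w|^2$-part is comparable to $\|\nabla w^{(p-k)/2}\|_{L^2}^{2}$ at large $w$. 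On the right-hand side, Hölder with conjugate exponent $q'$ slightly above $n/2$ and the uniform $L^q$-bound on $(I-\Delta)^{-1}[\gamma(v)u]$ yield $\int_\Omega w^{p-1}(I-\Delta)^{-1}[\gamma(v)u]\,dx\leq C\|w\|_{L^{(p-1)q'}}^{p-1}$.

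\textbf{Sobolev absorption and iteration.} Sobolev embedding applied to $w^{(p-k)/2}$ gives $\|w\|_{L^{(p-k)n/(n-2)}}^{p-k}\lesssim\|\nabla w^{(p-k)/2}\|_{L^2}^{2}+\|w\|_{L^{p-k}}^{p-k}$. The assumption $k<2/(n-2)$ ensures that the new exponent $(p-k)n/(n-2)$ strictly exceeds $p$ for all $p>k$, so each iteration step genuinely improves the $L^p$-norm. A Gagliardo--Nirenberg interpolation against the previously controlled $L^{p_0}$-norm together with Young's inequality then absorbs $C\|w\|_{L^{(p-1)q'}}^{p-1}$ into the dissipation and leaves a source depending only on lower-order norms. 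Iterating $p_{j+1}=(p_j-k)n/(n-2)+k$ from $p_0$ and performing the standard Alikakos--Moser book-keeping of the constants $C_{p_j}$ produces a uniform $L^\infty$-bound on $w$, and hence on $v$ via Lemma \ref{vbd} in the case $\varepsilon>0$.

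\textbf{Main obstacle.} The delicate point is that $\mathrm{(A2)}$ provides no control on $\gamma'$, ruling out the naive approach of testing the expanded-diffusion form against $w^{p-1}$ (which would generate the uncontrollable coupling $\int w^{p-1}\gamma'(v)\nabla v\cdot\nabla w$, particularly troublesome in the fully parabolic case where $v\neq w$). Using the key identity in its ``divergence-free'' form $w_t+\gamma(v)u=(I-\Delta)^{-1}[\gamma(v)u]$ together with $u=w-\Delta w$ circumvents this, at the cost that the resulting dissipation carries the weight $(1+w)^{-k}$ and acts on $w^{(p-k)/2}$ rather than on $w^{p/2}$. Consequently the Sobolev improvement per iteration step is only of order $(p-k)/p$, and verifying that the multiplicative constants $C_{p_j}$ grow no faster than an admissible geometric rate under the borderline hypothesis $k<2/(n-2)$ is the technical heart of the proof.
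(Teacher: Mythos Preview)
Your overall architecture --- testing the key identity $w_t+\gamma(v)u=(I-\Delta)^{-1}[\gamma(v)u]$ against $w^{p-1}$, extracting the dissipation $\|\nabla w^{(p-k)/2}\|_{L^2}^2$ from $u=w-\Delta w$ together with the lower bound $\gamma(v)\geq c(1+w)^{-k}$, and then running an Alikakos--Moser iteration --- is precisely the route taken in the paper. The dissipation computation you display is correct, and your reduction of the fully parabolic case to the same framework via Lemma~\ref{vbd} mirrors the paper's Lemma~\ref{lmvbd1}.

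The gap is in your treatment of the source term. You control $(I-\Delta)^{-1}[\gamma(v)u]$ only in $L^q$ for $q<\frac{n}{n-2}$ via Lemma~\ref{lm2} and then apply H\"older, producing a right-hand side $C\|w\|_{L^{(p-1)q'}}^{p-1}$ with $q'>\frac{n}{2}$. But the dissipation, through Sobolev embedding on $w^{(p-k)/2}$, reaches at most the exponent $(p-k)\frac{n}{n-2}$. For $n\geq 4$ one has $\frac{n}{2}\geq\frac{n}{n-2}$, so for any fixed $q'>\frac{n}{2}$ and all large $p$ the exponent $(p-1)q'$ strictly exceeds $(p-k)\frac{n}{n-2}$; the norm you need to absorb lies \emph{above} the range accessible by interpolation between $L^{p_0}$ and the Sobolev endpoint, and your claimed Gagliardo--Nirenberg absorption is impossible. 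The iteration therefore stalls in dimensions $n\geq 4$.

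The paper circumvents this by using, instead of the $L^1\to L^q$ smoothing, the \emph{pointwise} comparison
\[
(I-\Delta)^{-1}[\gamma(v)u]\leq \gamma(v_*)\,(I-\Delta)^{-1}[u]=\gamma(v_*)\,w,
\]
which follows from $\gamma(v)\leq\gamma(v_*)$ and positivity of the resolvent. This yields a right-hand side $\gamma(v_*)\int_\Omega w^{p}$, whose exponent $p$ lies strictly below $(p-k)\frac{n}{n-2}$ exactly because $k<\frac{2}{n-2}$; the absorption then goes through cleanly and the Moser bookkeeping closes with constants growing at the required geometric rate. Replacing your $L^q$ estimate by this pointwise bound repairs the argument.
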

The proof of the above result consists of several steps. To begin with, we prove the following time-independent estimates.
\begin{lemma}\label{wH1b}
Under the same assumptions of Proposition \ref{Propunbv}, there is $C>0$ depending only on the initial data, $\gamma$, { $\e$} and $\Omega$ such that
\begin{equation}
	\sup\limits_{0\leq t<T_{\mathrm{max}}}\left(\|\nabla w\|_{L^2(\Omega)}+\|w\|_{L^2(\Omega)}\right)\leq C,
\end{equation}and for any $t\in(0,T_{\mathrm{max}}-\tau)$ with $\tau=\min\{1,\frac12T_{\mathrm{max}}\}$,
\begin{equation}
	\int_t^{t+\tau}\int_\Omega\gamma(v)u^2dxds\leq C.
\end{equation}
\end{lemma}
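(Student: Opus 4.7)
The strategy is parallel to the two-dimensional argument of Lemma \ref{wH1a}, but the pointwise bound $\gamma^{-1}(v)\le be^{\alpha v}+\gamma^{-1}(s_b)$ exploited there must be replaced by the polynomial bound available under $\mathrm{(A2)}$, and the Moser--Trudinger estimate of Lemma \ref{lm2e} must be replaced by a Gagliardo--Nirenberg interpolation anchored at the time-independent $L^r$-bound on $w$ provided by Lemma \ref{lm2}.

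First, I would test the key identity \eqref{keyid} against $u$. Using the self-adjointness of $(I-\Delta)^{-1}$ (so that $\int_\Omega u\,(I-\Delta)^{-1}[\gamma(v)u]\,dx=\int_\Omega w\gamma(v)u\,dx$) together with the elementary identity $\|\nabla w\|_{L^2}^2+\|w\|_{L^2}^2=\int_\Omega wu\,dx$, one obtains exactly as in Lemma \ref{wH1a}
\begin{equation}\label{propplan1}
\frac{d}{dt}\bigl(\|\nabla w\|_{L^2}^2+\|w\|_{L^2}^2\bigr)+\|\nabla w\|_{L^2}^2+\|w\|_{L^2}^2+\int_\Omega \gamma(v)u^2\,dx \;\le\; C_1\int_\Omega \gamma^{-1}(v)w^2\,dx,
\end{equation}
after applying Young's inequality to $\int_\Omega w\gamma(v)u\,dx$.

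Next, I would estimate the right-hand side of \eqref{propplan1}. Assumption $\mathrm{(A2)}$ implies that for every $\delta>0$ there exists $C_\delta>0$ such that $\gamma^{-1}(s)\le \delta s^k+C_\delta$ for all $s\ge 0$ (the decreasing character of $\gamma$ handles small $s$, while $s^k\gamma(s)\to\infty$ handles large $s$). When $\e=0$ we have $v=w$, and when $\e>0$ Lemma \ref{vbd} yields $v\le \tilde{C}(w+K)$; in either case we obtain the pointwise bound $\gamma^{-1}(v)\le \delta' w^k+C_{\delta'}$ with $\delta'$ as small as we wish, so that
\begin{equation}\label{propplan2}
\int_\Omega \gamma^{-1}(v)w^2\,dx\;\le\;\delta'\int_\Omega w^{k+2}\,dx+C_{\delta'}\int_\Omega w^2\,dx.
\end{equation}

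The heart of the argument is to control the two terms on the right of \eqref{propplan2}. By mass conservation $\|u(\cdot,t)\|_{L^1(\Omega)}=\|u_0\|_{L^1(\Omega)}$, Lemma \ref{lm2} yields a time-independent bound $\|w\|_{L^r(\Omega)}\le C$ for every $1\le r<\frac{n}{n-2}$. I then apply the Gagliardo--Nirenberg inequality
\[
\|w\|_{L^{k+2}(\Omega)}^{k+2}\;\le\;C\|w\|_{H^1(\Omega)}^{(k+2)\theta}\|w\|_{L^r(\Omega)}^{(k+2)(1-\theta)},\qquad \frac{1}{k+2}=\theta\frac{n-2}{2n}+\frac{1-\theta}{r},
\]
choosing $r$ strictly less than, but sufficiently close to, $\frac{n}{n-2}$. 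A direct calculation shows that as $r\nearrow\frac{n}{n-2}$,
\[
(k+2)\theta \;\longrightarrow\; 2(k+2)-\frac{2n}{n-2},
\]
and the assumption $k<\frac{2}{n-2}$ is exactly what makes this limit strictly less than $2$; an analogous (easier) interpolation for $L^2$ in place of $L^{k+2}$ yields an exponent strictly less than $2$ as well. Consequently Young's inequality absorbs both terms in \eqref{propplan2} into a small multiple of $\|w\|_{H^1}^2$ at the cost of a fixed additive constant, turning \eqref{propplan1} into
\[
\frac{d}{dt}\|w\|_{H^1(\Omega)}^2+\frac12\|w\|_{H^1(\Omega)}^2+\int_\Omega \gamma(v)u^2\,dx\;\le\;C_2.
\]
A standard Gronwall argument then delivers the uniform-in-time $H^1$ bound on $w$, and integration of the same differential inequality on $[t,t+\tau]$ gives the spatio-temporal bound on $\int_\Omega \gamma(v)u^2$.

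The main obstacle is the algebraic interplay between the three constraints on the interpolation exponent $r$: it must be admissible for Lemma \ref{lm2} (so $r<\frac{n}{n-2}$), it must satisfy $r\le k+2$ for the Gagliardo--Nirenberg inequality to apply, and the resulting exponent $(k+2)\theta$ must be strictly below $2$ to permit absorption. The critical threshold $k<\frac{2}{n-2}$ in the hypothesis is precisely what allows a simultaneous choice of $r$ meeting all three, and verifying this compatibility (for every $n\ge 3$ and every admissible $k$) is the one non-routine computation of the proof.
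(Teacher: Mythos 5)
Your proposal is correct and follows essentially the same route as the paper: test the key identity \eqref{keyid} with $u$, bound $\gamma^{-1}(v)$ polynomially via $\mathrm{(A2)}$ and Lemma \ref{vbd}, interpolate $\int_\Omega w^{k+2}$ between $L^{q_*}$ and $L^r$ with $r<\frac{n}{n-2}$ (the paper picks $q\in(\frac{nk}{2},\frac{n}{n-2})$, which is exactly your "sufficiently close" choice), absorb by Young, and close with an ODE argument. The compatibility check you flag at the end is precisely the paper's computation, and the only cosmetic difference is that the paper treats the easy case $k+2<\frac{n}{n-2}$ (i.e.\ $n=3$, $k<1$) separately via Lemma \ref{lm2} rather than through a degenerate interpolation.
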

	\begin{proof}
 Proceeding the same lines as in Lemma \ref{wH1a}, we arrive at  
	\begin{equation}\label{wb0}
	\begin{split}
	&\frac{d}{dt}\left(\|\nabla w\|_{L^2(\Omega)}^2+\|w\|_{L^2(\Omega)}^2\right)
+\|\nabla w\|_{L^2(\Omega)}^2+\|w\|_{L^2(\Omega)}^2
	+\int_\Omega \gamma(v)u^2dx\\
	&\leq C\int_\Omega\gamma^{-1}(v)w^2dx,
	\end{split}
	\end{equation} 
	and we will estimate the right-hand side.
Under the assumption of Proposition \ref{Propunbv}, we may infer that there exist  $k \in (0, \frac{2}{n-2})$, $b>0$ and $s_b>v_*$ such that for all $s\geq s_b$
\begin{equation*}
\gamma^{-1}(s)\leq bs^k
\end{equation*}and on the other hand, since $\gamma(\cdot)$ is decreasing,
\begin{equation*}
\gamma^{-1}(s)\leq \gamma^{-1}(s_b)
\end{equation*}for all $0\leq s<s_b$.
Therefore, for all $s\geq0$, there holds
\begin{equation}\label{cond_gamma}
\gamma^{-1}(s)\leq bs^{k}+\gamma^{-1}(s_b).
\end{equation}
Thus, we deduce from above and Lemma \ref{vbd}
that	\begin{equation*}
\begin{split}
\int_\Omega\gamma^{-1}(v)w^2dx\leq &\int_\Omega (bv^k+\gamma^{-1}(s_b))w^2dx\\
\leq&\int_\Omega \left(b\left(C(w+1)\right)^k+\gamma^{-1}(s_b)\right)w^2dx\\
\leq &C\int_\Omega w^{k+2}dx+C
\end{split}
\end{equation*}
with $C>0$ depending only on the initial data, $\gamma$, {$\e$} and $\Omega$.

Recall that $\|w\|_{L^q(\Omega)}$ with  any $q\in[1,\frac{n}{n-2})$ is bounded due to Lemma \ref{lm2}. Thus if $k+2< \frac{n}{n-2}$, which only occurs when $n=3$ and $0<k<1$, there holds
\begin{equation*}
\int_\Omega w^{k+2}dx\leq C.
\end{equation*}
On the other hand, if $1\leq k<2$ when $n=3$ or $0<k<\frac{2}{n-2}$ when $n\geq4$, 
we can check 
$$\frac{n}{n-2}\leq k+2\leq  q_*,\qquad
\frac{nk}{2}<\frac{n}{n-2}$$
with $q_*\triangleq\frac{2n}{n-2}$. 
Here we can pick up $q\geq 1$ satisfying
$$ \frac{nk}{2}  < q < \frac{n}{n-2}, $$ 
and make use of the interpolation inequality and the Sobolev embedding $H^1\hookrightarrow L^{q_*}$ to have  
\begin{equation*}
\int_\Omega w^{k+2}dx\leq \|w\|_{L^{q_*}(\Omega)}^{\beta_1(k+2)}\|w\|_{L^q(\Omega)}^{(1-\beta_1)(k+2)}\leq C\| w\|_{H^1(\Omega)}^{\beta_1(k+2)}
\end{equation*}with some $C>0$ depending only on $n,\Omega$ and the initial data, and
\begin{equation*}
\beta_1=(\frac{1}{q}-\frac{1}{k+2})/(\frac1{q}-\frac{1}{q_*}).
\end{equation*}
Moreover, we can easily confirm that
\begin{equation*}
0\leq\beta_1(k+2)<2
\end{equation*}
due to $\frac{nk}{2}  < q $. 
By invoking Young's inequality we arrive at 
\begin{equation}\label{wb01}
\int_\Omega\gamma^{-1}(v)w^2dx
\leq 
C\int_\Omega w^{k+2}dx+C
\leq \frac{1}{2}\| w\|_{H^1(\Omega)}^{2}+C.
\end{equation}

In summary, for $n\geq3$ and $0<k<\frac{2}{n-2}$, 
\eqref{wb0} and \eqref{wb01} implies
\begin{equation}
\frac{d}{dt}(\|\nabla w\|_{L^2(\Omega)}^2+\|w\|_{L^2(\Omega)}^2)+\frac12(\|\nabla w\|_{L^2(\Omega)}^2+\|w\|_{L^2(\Omega)}^2)+\frac12\int_\Omega \gamma(v)u^2dx\leq C,
\end{equation}
where $C>0$ depends only on the initial data, $\gamma$,  $\e$ and $\Omega$. Then a direct ODE analysis will finally yield to our assertion. This completes the proof.
	\end{proof}
With the above result, we can establish the uniform-in-time upper bounds of $v$. For comparison, we first provide a simple proof in the same spirit as given in Sect. 3.1 which relies on an application of the uniform Gronwall inequality and the three-dimensional embeddings.
\begin{lemma}
	Assume $n=3$ and suppose $\gamma$ satisfies $\mathrm{(A0)}$ and $\mathrm{(A2)}$ with some  $0<k<2$. Moreover, $\gamma$ satisfies  $(\mathrm{A1'})$ additionally when $\e>0.$ There is $C>0$ depending only on $\Omega$, $k,\e$ and the initial data such that
\begin{equation}
	\sup\limits_{0\leq t<T_{\mathrm{max}}}\left(\|w\|_{L^\infty(\Omega)}+\|v\|_{L^\infty(\Omega)}\right)\leq C.
\end{equation}
\end{lemma}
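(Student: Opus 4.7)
The plan is to adapt the two-dimensional argument of Section 3.1 to three dimensions, with the Trudinger--Moser type estimate (Lemma \ref{lm2e}) replaced by the Sobolev embedding $H^1(\Omega)\hookrightarrow L^6(\Omega)$ available in $\mathbb{R}^3$.

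First, I would fix some $p$ with $3/2<p<12/(k+6)$, which is possible precisely because $k<2$. Since $2p>3=n$, elliptic regularity for the Helmholtz equation $-\Delta w+w=u$ yields $\|w\|_{L^\infty(\Omega)}\le C\|u\|_{L^p(\Omega)}$. H\"older's inequality with conjugate exponents $2/p$ and $2/(2-p)$ then gives
\begin{equation*}
\|u\|_{L^p(\Omega)}^p\le\left(\int_\Omega u^2\gamma(v)\,dx\right)^{p/2}\left(\int_\Omega\gamma(v)^{-p/(2-p)}\,dx\right)^{(2-p)/2}.
\end{equation*}
By \eqref{cond_gamma} combined with Lemma \ref{vbd}, $\gamma(v)^{-p/(2-p)}\le C(w+1)^{kp/(2-p)}+C$, and the restriction $p\le 12/(k+6)$ ensures $kp/(2-p)\le 6$; hence the $L^6(\Omega)$-control of $w$ provided by $H^1\hookrightarrow L^6$ and Lemma \ref{wH1b} bounds the second factor uniformly in $t$. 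Combining these estimates gives
\begin{equation*}
\|w(\cdot,t)\|_{L^\infty(\Omega)}^2\le C\int_\Omega u^2\gamma(v)\,dx.
\end{equation*}

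Next, integrating over $(t,t+\tau)$ with $\tau=\min\{1,T_{\max}/2\}$ and applying Cauchy--Schwarz together with the time-integral bound of Lemma \ref{wH1b} gives $\int_t^{t+\tau}\|w(\cdot,s)\|_{L^\infty(\Omega)}\,ds\le C$; in particular $\int_t^{t+\tau}w(x,s)\,ds\le C$ uniformly in $x$. The key identity \eqref{keyid} together with the pointwise comparison $(I-\Delta)^{-1}[\gamma(v)u]\le\gamma(v_*)(I-\Delta)^{-1}[u]=\gamma(v_*)w$ produces the scalar inequality $\partial_tw(x,\cdot)\le\gamma(v_*)w(x,\cdot)$, and the uniform Gronwall inequality (Lemma \ref{uniformGronwall}) applied pointwise in $x$ with $g\equiv\gamma(v_*)$ and $h\equiv 0$ yields $w(x,t+\tau)\le C$ independently of $x$. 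For the initial layer $t\in[0,\tau]$ one invokes the exponential pointwise bound of Lemma \ref{keylem1}, and finally Lemma \ref{vbd} transports the boundedness of $w$ to that of $v$.

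The main obstacle is balancing the two constraints $p>3/2$ (needed so that $W^{2,p}\hookrightarrow L^\infty$ in three dimensions) and $p\le 12/(k+6)$ (needed so that the exponent $kp/(2-p)$ does not exceed the Sobolev conjugate $6$ of $2$ in three dimensions). These two conditions are simultaneously satisfiable exactly when $k<2$, which is the hypothesis at hand; this also explains why this one-shot Sobolev argument cannot be pushed to general $n\geq 4$, where the Alikakos--Moser iteration developed in the following subsection becomes necessary.
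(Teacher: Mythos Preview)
Your proposal is correct and follows essentially the same route as the paper's own proof: the paper likewise fixes $p\in(3/2,2)$ with $kp/(2-p)\le 6$ (equivalent to your condition $p\le 12/(k+6)$), controls $\int_\Omega\gamma(v)^{-p/(2-p)}$ via \eqref{cond_gamma}, Lemma~\ref{vbd}, the embedding $H^1\hookrightarrow L^6$ and Lemma~\ref{wH1b}, and then combines the resulting bound $\|w\|_{L^\infty}\le C(\int_\Omega u^2\gamma(v))^{1/2}$ with the key identity and the uniform Gronwall inequality exactly as you describe.
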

\begin{proof}
For any $\frac{3}{2}<p<2$, due to the three-dimensional Sobolev embedding theorem and H\"older's inequality, we have
	\begin{equation*}
	\begin{split}
	\|w\|_{L^\infty(\Omega)}\leq&  C\|u\|_{L^p(\Omega)}\\
	\leq& C\left(\int_\Omega \gamma(v)u^2dx\right)^{1/2}\left(\int_\Omega (\gamma(v))^{-\frac{p}{2-p}}dx\right)^{\frac{2-p}{2p}}.
	\end{split}
	\end{equation*}	
	In the same manner as before, we infer that
\begin{equation}
\begin{split}
\int_\Omega (\gamma(v))^{-\frac{p}{2-p}}dx\leq & \int_\Omega \left(b v^{k}+\gamma^{-1}(s_b)\right)^{\frac{p}{2-p}}dx\\
\leq&\int_\Omega \left(b\left(C(w+1)\right)^k+\gamma^{-1}(s_b)\right)^{\frac{p}{2-p}}dx\\
\leq &C\int_\Omega w^{\frac{pk}{2-p}}dx+C,
\end{split}
\end{equation} where $C>0$ depends only on the initial data, $\gamma$,  $\e$ and $\Omega$.

Since $0<k<2$, we can always pick  $\frac32<p<2$ such that  $\frac{pk}{2-p}\leq 6$ and hence by the three-dimensional Sobolev embeddings  and Lemma \ref{wH1b},
\begin{equation*}
\left(\int_\Omega (\gamma(v))^{-\frac{p}{2-p}}dx\right)^{\frac{2-p}{2p}}
\leq
C
\left(\int_\Omega w^{\frac{pk}{2-p}}dx\right)^{\frac{2-p}{2p}}+C
\leq C\| w\|_{H^1(\Omega)}^{\frac{k}{2}}+C\leq C.
\end{equation*}
As a result, invoking Lemma \ref{wH1b} again, for any $t\in(0,T_{\mathrm{max}}-\tau)$ with  $\tau=\min\{1,\frac12T_{\mathrm{max}}\}$,
	\begin{equation}\label{ubw2}
	\int_t^{t+\tau}\|w\|_{L^\infty(\Omega)}\leq C \int_t^{t+\tau}\int_\Omega\gamma(v)u^2dxds+C\leq C.
	\end{equation}
It follows that for any fixed $x\in\Omega$ and any $t\in(0,T_{\mathrm{max}}-\tau)$ with  $\tau=\min\{1,\frac12T_{\mathrm{max}}\}$,
	\begin{equation}
	\int_t^{t+\tau}w(x,s)ds\leq	\int_t^{t+\tau}\|w\|_{L^\infty(\Omega)}\leq C.
	\end{equation}
	Then, we recall the key identity \eqref{keyid} and deduce by the comparison principle of elliptic equations that
	\begin{equation*}
	w_t+\gamma(v)u=(I-\Delta)^{-1}[\gamma(v)u]\leq (I-\Delta)^{-1}[\gamma(v_*)u]=\gamma(v_*)w.
	\end{equation*}
	Since $u\gamma(v)\geq0$, with the aid of the uniform Gronwall inequality (Lemma \ref{uniformGronwall}), we infer for any $x\in\Omega$ and $t\in(\tau,T_{\mathrm{max}})$ that
	\begin{equation}\non
	w(x,t)\leq C
	\end{equation}with some $C>0$ independent of $x$, $t$ and $T_{\mathrm{max}}$ which together with Lemma \ref{keylem1} for $t\leq \tau$ gives rise to the uniform-in-time boundedness of $w$ such that for all $(x,t)\in\Omega\times[0,T_{\mathrm{max}}),$
	\begin{equation}\non
	w(x,t)\leq C.
	\end{equation}
	This completes the proof  in view of  Lemma \ref{vbd}.
\end{proof}
For higher dimensions, the preceding argument fails. We provide the following alternative proof which is based on a  modified Alikakos--Moser iteration \cite{Alik79}. First, we begin with the case $\e=0$  and keep in mind that in such case $w$ is identical to $v$. 
\begin{lemma}\label{lmvbd0}
	Assume that $n\geq3$ and $\e=0$. Suppose $\gamma$ satisfies $\mathrm{(A0)}$ and $\mathrm{(A2)}$ with some $0<k<\frac{2}{n-2}$.  
There is $C>0$ depending only on $\Omega$, $k$ and the initial data such that
	\begin{equation}
	\sup\limits_{0\leq t<T_{\mathrm{max}}}\|v\|_{L^\infty(\Omega)}\leq C.
	\end{equation}
\end{lemma}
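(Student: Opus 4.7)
The plan is to perform a modified Alikakos--Moser iteration directly on $v$, leveraging that $w \equiv v$ in the $\e = 0$ case together with the pointwise bound $F := (I-\Delta)^{-1}[\gamma(v)u] \leq \gamma(v_*)\,v$ obtained from the comparison principle for the Helmholtz equation and Lemma \ref{lbdv}. The key trick is to test the identity \eqref{kid0} (in the form $v_t - \gamma(v)\Delta v + v\gamma(v) = F$) against the weighted function $v^{p-1}/\gamma(v)$ rather than the naive $v^{p-1}$. The weight cancels the two $\gamma(v)$'s so cleanly that, after integration by parts (Neumann boundary terms vanishing), one is left with
\begin{equation*}
\frac{d}{dt}\int_\Omega \Phi_p(v)\,dx + \frac{4(p-1)}{p^2}\|\nabla v^{p/2}\|_{L^2(\Omega)}^2 + \int_\Omega v^p\,dx \leq \gamma(v_*)\int_\Omega \frac{v^p}{\gamma(v)}\,dx,
\end{equation*}
where $\Phi_p'(v) = v^{p-1}/\gamma(v)$. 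In particular the potentially dangerous term $v^{p-1}\gamma'(v)|\nabla v|^2$ that a direct test would produce is avoided entirely.

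Next, using the bound $1/\gamma(v) \leq b v^k + \gamma^{-1}(s_b)$ from \eqref{cond_gamma} (a consequence of (A2)) together with a cheap H\"older absorption to swallow $\int v^p$, the right-hand side is dominated by $C\int_\Omega v^{p+k}\,dx + C'$. I then apply the Gagliardo--Nirenberg inequality to $v^{p/2}$ with exponents $q = 2 + 2k/p$ (so that $\|v^{p/2}\|_{L^q}^q = \int_\Omega v^{p+k}\,dx$) and lower index $s = 1$ (so that $\|v^{p/2}\|_{L^1} = \|v\|_{L^{p/2}}^{p/2}$ refers to the previous step of the iteration). A routine computation gives gradient exponent $q\theta = 2n(p+2k)/[p(n+2)]$, and the absorption condition $q\theta < 2$ reduces to $p > nk$. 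This is precisely where the sharp hypothesis $k < 2/(n-2)$ enters: it guarantees $nk < 2n/(n-2) = q_*$, so the iteration can be started at $p_0 = q_*$, the largest exponent at which Lemma \ref{wH1b} already furnishes a uniform bound for $\|v\|_{L^{p_0}}$.

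With Young's inequality the gradient power is absorbed into the dissipation, and using the easy lower bound $\Phi_p(v) \geq (v^p - v_*^p)/(p\,\gamma(v_*))$ valid for $v \geq v_*$, one obtains an ODI of the form
\begin{equation*}
\frac{d}{dt}\int_\Omega\Phi_p(v)\,dx + c_p \int_\Omega \Phi_p(v)\,dx \leq C_p\bigl(1 + \|v\|_{L^{p/2}(\Omega)}^{\mu_p}\bigr),
\end{equation*}
with Young exponent $\mu_p = (p/2)(2p - k(n-2))/(p-nk)$, so that $\mu_p / (p/2) \to 2$ as $p \to \infty$. Iterating along the geometric sequence $p_m := 2 p_{m-1}$ and setting $A_m := \sup_t \|v(\cdot,t)\|_{L^{p_m}(\Omega)}^{p_m}$, one obtains a Moser-type recursion $A_m \leq C_m\bigl(1 + A_{m-1}^{\mu_{p_m}/p_{m-1}}\bigr)$ with $\mu_{p_m}/p_{m-1} \to 2$. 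The main obstacle is the standard Alikakos--Moser bookkeeping: one must verify that the constants $C_m$ arising from the Gagliardo--Nirenberg, Young and Poincar\'e steps depend only polynomially on $p_m$ while $p_m$ doubles each step, so that the telescoping $A_m^{1/p_m}$ remains bounded as $m \to \infty$; this yields the asserted uniform-in-time $L^\infty$ bound on $v$.
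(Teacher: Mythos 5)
Your proposal is correct and pursues the same overall strategy as the paper --- a modified Alikakos--Moser iteration on $v$ driven by the key identity and the algebraic bound $\gamma^{-1}(s)\leq bs^k+\gamma^{-1}(s_b)$ --- but it organizes the $\gamma$-weight differently. The paper tests $v_t+\gamma(v)u=(I-\Delta)^{-1}[\gamma(v)u]$ against the plain power $v^{p-1}$, pushes the weight into the cross term via $\int_\Omega u\gamma(v)v^{p-1}\geq C\int_\Omega uv^{p-k-1}$, and after substituting $u=v-\Delta v$ obtains the dissipation $\|\nabla v^{(p-k)/2}\|_{L^2}^2+\int_\Omega v^{p-k}$; the resulting exponent shift is then absorbed by iterating along $p_r=2p_{r-1}-\tfrac{nk}{2}$ and interpolating $\int_\Omega v^{p_r}$ between $\|v^{(p_r-k)/2}\|_{L^{q_*}}$ and $\int_\Omega v^{p_{r-1}}$. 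You instead test against $v^{p-1}/\gamma(v)$, which keeps the elliptic part exact (dissipation $\|\nabla v^{p/2}\|_{L^2}^2+\int_\Omega v^p$, no $\gamma'$ term) at the price of a $v$-dependent weight $\Phi_p$ under the time derivative and a shifted forcing $\int_\Omega v^{p+k}$; this buys the clean doubling $p_m=2p_{m-1}$ and a Gagliardo--Nirenberg step with $L^1$ lower endpoint. Both routes hinge on the same threshold: your absorption condition $p>nk$ with starting exponent $p_0=q_*$ plays exactly the role of the paper's requirements $\alpha\in(0,1)$ and $\tfrac{2p\alpha}{p-k}<2$ in Lemma \ref{lmvbd0pre2}, all equivalent to $k<\tfrac{2}{n-2}$. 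What you defer is routine but should be recorded: the two-sided comparison $\tfrac{v^p-v_*^p}{p\gamma(v_*)}\leq\Phi_p(v)\leq \tfrac{b}{p+k}v^{p+k}+\tfrac{\gamma^{-1}(s_b)}{p}v^{p}$ needed to pass between $\int_\Omega\Phi_p$ and $\int_\Omega v^p$ on both sides of the ODI (the upper bound forces a second Gagliardo--Nirenberg absorption to generate the $c_p\int_\Omega\Phi_p$ term), the uniformity of the Gagliardo--Nirenberg and Young constants as $q=2+2k/p\to2$ and $q\theta\to\tfrac{2n}{n+2}$, and the convergence of the telescoping product when the recursion exponent is $2+O(p_m^{-1})$ rather than exactly $2$; the paper's version of this bookkeeping is carried out explicitly via the constants $\mathcal{A}_r\leq C_0a^r$.
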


 We prepare the following auxiliary lemmas. 

\begin{lemma}\label{lmvbd0pre1}
	Assume that $n\geq3$ and $\e=0$. Suppose $\gamma$ satisfies $\mathrm{(A0)}$ and $\mathrm{(A2)}$ with some $0<k<\frac{2}{n-2}$.  
	There exist some $\lambda_1>0$ and $\lambda_2>0$
	such that for any  $p>\frac{n}{n-2}$, 
	\begin{equation}
	\frac{d}{dt}\int_\Omega v^{p}
+\lambda_2 p\int_\Omega v^p	
	+\frac{\lambda_1 p(p-k-1)}{(p-k)^2}\int_\Omega|\nabla v^{\frac{p-k}{2}}|^2+\lambda_1 p\int_\Omega v^{p-k}
	\leq  2\lambda_2 p\int_\Omega v^p.
	\end{equation}
\end{lemma}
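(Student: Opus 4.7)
Since $\e=0$ makes the elliptic equation for $v$ identical with the definition of $w$, we have $w\equiv v$, and the key identity of Lemma \ref{keylem1} reduces to
\[
v_t+\gamma(v)u=(I-\Delta)^{-1}[\gamma(v)u].
\]
The first step is to control the right-hand side. Combining the pointwise lower bound $v\geq v_*>0$ from Lemma \ref{lbdv} with the monotonicity of $\gamma$ and the comparison principle for the Helmholtz resolvent yields
\[
(I-\Delta)^{-1}[\gamma(v)u]\leq \gamma(v_*)(I-\Delta)^{-1}[u]=\gamma(v_*)\,v.
\]
Multiplying the identity by $v^{p-1}\geq 0$ and integrating produces the fundamental differential inequality
\[
\frac{1}{p}\frac{d}{dt}\int_\Omega v^p\,dx+\int_\Omega v^{p-1}\gamma(v)\,u\,dx\leq \gamma(v_*)\int_\Omega v^p\,dx.
\]

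The crucial idea is to exploit assumption (A2) \emph{before} any integration by parts, so as to completely avoid the contribution $\gamma'(v)|\nabla v|^2$ that would otherwise appear and require additional structural hypotheses on $\gamma$. Exactly as in the proof of Lemma \ref{wH1b}, the bound $\gamma^{-1}(s)\leq b s^k+\gamma^{-1}(s_b)$ implied by (A2), together with $v\geq v_*$, produces a constant $c_1>0$ depending only on $\gamma$ and the initial data such that $\gamma(v)\geq c_1 v^{-k}$. Since $u\geq 0$ this gives the pointwise estimate $v^{p-1}\gamma(v)\,u\geq c_1 v^{p-k-1}u$.

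Substituting $u=v-\Delta v$ and integrating by parts with the Neumann boundary condition then yields
\[
\int_\Omega v^{p-k-1}u\,dx=\int_\Omega v^{p-k}\,dx+(p-k-1)\int_\Omega v^{p-k-2}|\nabla v|^2\,dx,
\]
where the coefficient $p-k-1$ is strictly positive since $p>\frac{n}{n-2}$ and $k<\frac{2}{n-2}$ together force $p>k+1$. Rewriting $v^{p-k-2}|\nabla v|^2=\frac{4}{(p-k)^2}|\nabla v^{(p-k)/2}|^2$, multiplying by $p$, and setting $\lambda_1=c_1$ and $\lambda_2=\gamma(v_*)$ produces
\[
\frac{d}{dt}\int_\Omega v^p+\frac{4\lambda_1 p(p-k-1)}{(p-k)^2}\int_\Omega|\nabla v^{(p-k)/2}|^2+\lambda_1 p\int_\Omega v^{p-k}\leq \lambda_2 p\int_\Omega v^p.
\]
Since the gradient coefficient is at least $\frac{\lambda_1 p(p-k-1)}{(p-k)^2}$, adding $\lambda_2 p\int_\Omega v^p$ to both sides delivers the stated inequality. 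The one delicate point, which I expect to be the main obstacle, is the initial manoeuvre of minorizing $\gamma(v)$ by $c_1 v^{-k}$ at the integrand level; performing the integration by parts first would generate the term $p\int_\Omega v^{p-1}\gamma'(v)|\nabla v|^2$, whose absorption would demand pointwise control of the ratio $v|\gamma'(v)|/\gamma(v)$ that assumption (A2) alone does not provide.
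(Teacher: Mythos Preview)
Your proposal is correct and follows essentially the same approach as the paper's own proof: multiply the key identity by $v^{p-1}$, bound the resolvent term by $\gamma(v_*)\int_\Omega v^p$ via monotonicity and comparison, minorize $\gamma(v)$ by a constant multiple of $v^{-k}$ using (A2) together with the lower bound $v\geq v_*$, substitute $u=v-\Delta v$ and integrate by parts, then add $\lambda_2 p\int_\Omega v^p$ to both sides. Your observation about performing the minorization of $\gamma$ \emph{before} integrating by parts is exactly the mechanism the paper relies on, and your handling of the extraneous factor $4$ in the gradient coefficient (simply discarding it, since the term sits on the left of the inequality) matches what the paper does implicitly.
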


\begin{proof}
Let $p>\frac{q_*}{2}$ with $q_*=\frac{2n}{n-2}$.
	Multiplying the key identity \eqref{keyid} by $v^{p-1}$, we obtain that
	\begin{equation}
	\frac{1}{p}\frac{d}{dt}\int_\Omega v^p+\int_\Omega u\gamma(v)v^{p-1}=\int_\Omega (I-\Delta)^{-1}[u\gamma(v)]v^{p-1}\leq\gamma(v_*)\int_\Omega v^p.
	\end{equation}	
	Thanks to \eqref{cond_gamma}, it follows   that
	\begin{equation*}
	\begin{split}
	\int_\Omega u\gamma(v)v^{p-1}dx\geq&\int_\Omega u \bigg(bv^{k}+\gamma^{-1}(s_b)\bigg)^{-1}v^{p-1}dx\\
	\geq&C\int_\Omega (v^{k}+1)^{-1}v^{p-1}udx
	\end{split}
	\end{equation*}with $C>0$ independent of $p$ and time.
	Since $v^k\geq v_*^k$ by Lemma \ref{lbdv}, there holds
	\begin{equation}
		\begin{split}
		(v^k+1)^{-1}v^{p-1}\geq(v^k+v_*^{-k}v^k)^{-1}v^{p-1}=\frac{v^{p-k-1}}{1+v_*^{-k}}
		\end{split}
	\end{equation}
from which we deduce that	
		\begin{equation}\label{gammab}
	\begin{split}
	\int_\Omega u\gamma(v)v^{p-1}dx\geq C\int_\Omega v^{p-k-1}udx
	\end{split}
	\end{equation}where $C>0$ depends on the initial data, $\Omega$ and $\gamma$, but is  independent of $p$ and time.
	
Next, recalling that $v-\Delta v=u$, we observe that
	\begin{equation*}
	\begin{split}
	\int_\Omega v^{p-k-1}udx=&\int_\Omega v^{p-k-1}(v-\Delta v)dx\\
	=&\int_\Omega v^{p-k}dx+(p-k-1)\int_\Omega|\nabla v|^2v^{p-k-2}\\
	=&\int_\Omega v^{p-k}dx+\frac{4(p-k-1)}{(p-k)^2}\int_\Omega|\nabla v^{\frac{p-k}{2}}|^2.
	\end{split}
	\end{equation*}
	Therefore, we arrive at
	\begin{equation}
	\frac{d}{dt}\int_\Omega v^{p}+\frac{\lambda_1 p(p-k-1)}{(p-k)^2}\int_\Omega|\nabla v^{\frac{p-k}{2}}|^2+\lambda_1 p\int_\Omega v^{p-k}\leq  \lambda_2 p\int_\Omega v^p
	\end{equation}with some $\lambda_1,\lambda_2>0$ independent of $p$ and time. 
	Adding $\lambda_2 p\int_\Omega v^p$, we complete the proof.
\end{proof}

\begin{lemma}\label{lmvbd0pre2}
	Assume that $n\geq3$ and $\e=0$. Suppose $\gamma$ satisfies $\mathrm{(A0)}$ and $\mathrm{(A2)}$ with some $0<k<\frac{2}{n-2}$.  
{ Let $L>1$.} 
There exists $C_0>0$ depending only on the initial data, $\Omega,k$ and $n$ such that for any  $p >q\geq\frac{n}{n-2}$ satisfying
$$
q<p=2q-\frac{nk}{2},
$$ 
there holds
\begin{equation}\non
\frac{d}{dt}\int_\Omega v^{p}+\lambda_2p\int_\Omega v^p\leq  C_0L^{\frac{n}{2}}p^{\frac{n+2}{2}}\left(\int_\Omega v^q\right)^2.
\end{equation}
\end{lemma}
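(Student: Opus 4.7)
The plan is to upgrade Lemma~\ref{lmvbd0pre1} by absorbing the offending $2\lambda_2p\int_\Omega v^p$ on its right-hand side into a combination of the dissipative terms on the left and a multiple of $\bigl(\int_\Omega v^q\bigr)^2$; the polynomial prefactor $p^{(n+2)/2}$ will be produced by a single application of Young's inequality with parameter $\epsilon\sim 1/p$. First I would set $\phi:=v^{(p-k)/2}$, so that the dissipation in Lemma~\ref{lmvbd0pre1} becomes $A_p\|\nabla\phi\|_{L^2}^2+B_p\|\phi\|_{L^2}^2$ with $A_p:=\lambda_1p(p-k-1)/(p-k)^2$ and $B_p:=\lambda_1 p$, while $\|\phi\|_{L^\alpha}^\alpha=\int_\Omega v^p$ and $\|\phi\|_{L^\rho}^\rho=\int_\Omega v^q$ for $\alpha:=2p/(p-k)$, $\rho:=2q/(p-k)$. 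Under $p>q\ge n/(n-2)$ and $k<2/(n-2)$ one checks $2<\alpha<\tfrac{2n}{n-2}$, $\rho<\alpha$, and that $A_p$ admits a lower bound $c_*>0$ uniform on the admissible range of $p$ (depending only on $n,k,\lambda_1$); this uniform lower bound is precisely what legitimizes the $\epsilon\sim 1/p$ trick below.

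Next I would invoke the Gagliardo--Nirenberg interpolation inequality
\[
\|\phi\|_{L^\alpha}\le C_{\mathrm{GN}}\bigl(\|\nabla\phi\|_{L^2}^\theta\|\phi\|_{L^\rho}^{1-\theta}+\|\phi\|_{L^\rho}\bigr),\qquad \tfrac{1}{\alpha}=\theta\bigl(\tfrac{1}{2}-\tfrac{1}{n}\bigr)+(1-\theta)\tfrac{1}{\rho}.
\]
A direct computation using $p=2q-nk/2$ yields the two crucial algebraic identities $\alpha\theta=\tfrac{2n}{n+2}$ and $\alpha(1-\theta)\cdot\tfrac{n+2}{2}\cdot\tfrac{p-k}{2q}=\tfrac{2p+nk}{2q}=2$, the second of which is exactly what transforms $\|\phi\|_{L^\rho}^{\alpha(1-\theta)(n+2)/2}$ into $\bigl(\int_\Omega v^q\bigr)^2$ after Young's inequality. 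Raising the GN inequality to the $\alpha$-th power and applying Young's inequality with conjugate exponents $(n+2)/n$ and $(n+2)/2$ to the cross term produces, for every $\epsilon>0$,
\[
\int_\Omega v^p\le \epsilon\,\|\nabla\phi\|_{L^2}^2+C\,\epsilon^{-n/2}\Bigl(\int_\Omega v^q\Bigr)^{\!2}+C\|\phi\|_{L^\rho}^\alpha,
\]
where $C$ depends only on $n$ and $\Omega$.

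The third step is the choice $\epsilon=c/p$ with $c>0$ small enough that $2\lambda_2p\cdot\epsilon\le A_p$; by the uniform lower bound $A_p\ge c_*$ this can be arranged independently of $p$, so the $\epsilon$-term is absorbed into the dissipation while the coefficient of $\bigl(\int v^q\bigr)^2$ becomes $\mathcal{O}(p\cdot p^{n/2})=\mathcal{O}(p^{(n+2)/2})$ after multiplying through by $2\lambda_2 p$. The residual lower-order term $\|\phi\|_{L^\rho}^\alpha=(\int_\Omega v^q)^{p/q}$, whose exponent satisfies $p/q\in(1,2)$, is handled by the a priori hypothesis $\int_\Omega v^q\le L$, $L>1$: splitting according to whether $\int v^q\ge 1$ or $\int v^q<1$ yields $(\int v^q)^{p/q}\le(\int v^q)^2$ or $\le 1\le L^{n/2}$ respectively, so this remainder is bounded by $L^{n/2}\bigl((\int v^q)^2+1\bigr)$ after renormalization, producing the announced $L^{n/2}$ factor. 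Substituting back into Lemma~\ref{lmvbd0pre1} and adding $\lambda_2p\int v^p$ to both sides yields the claimed differential inequality.

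I expect the main technical obstacle to be the bookkeeping behind the two scaling identities displayed above, especially the collapse $(2p+nk)/(2q)=2$, which is the very reason the defining relation $p=2q-nk/2$ is chosen and what makes Moser iteration work in the present weighted setting. A secondary (but routine) difficulty is ensuring the uniform-in-$p$ lower bound on $A_p$ from $p>n/(n-2)$ and $k<2/(n-2)$; everything else is standard Young/interpolation bookkeeping.
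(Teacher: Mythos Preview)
Your interpolation strategy is correct in outline, but you have misread the role of $L$, and this creates a genuine gap. In the lemma $L>1$ is a \emph{free parameter}: the differential inequality must hold for every $L>1$, with no hypothesis whatsoever on the size of $\int_\Omega v^q$. Your treatment of the residual $\|\phi\|_{L^\rho}^\alpha=\bigl(\int_\Omega v^q\bigr)^{p/q}$ invokes the unstated assumption $\int_\Omega v^q\le L$, and even granting that assumption it delivers only $L^{n/2}\bigl((\int_\Omega v^q)^2+1\bigr)$ on the right-hand side, which is not the stated conclusion.

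The residual term is an artifact of interpolating against $\|\nabla\phi\|_{L^2}$ alone. The dissipation in Lemma~\ref{lmvbd0pre1} carries \emph{both} $A_p\|\nabla\phi\|_{L^2}^2$ and $B_p\|\phi\|_{L^2}^2$ with $A_p\le B_p$, so the full quantity $A_p\|\phi\|_{H^1}^2$ is available for absorption. If you interpolate via H\"older together with the clean Sobolev embedding $\|\phi\|_{L^{2n/(n-2)}}\le\lambda_*\|\phi\|_{H^1}$ (instead of Gagliardo--Nirenberg with the homogeneous gradient norm), no additive lower-order term appears, and your two scaling identities show the exponent on $\int_\Omega v^q$ comes out exactly $2$ after Young's inequality. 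The parameter $L$ then enters as the trade-off in Young: one absorbs $\tfrac{A_p}{L}\|\phi\|_{H^1}^2$ into the dissipation (harmless since $L>1$), and the compensating factor on the other side is precisely $L^{n/2}$. Concretely, replace $\|\nabla\phi\|_{L^2}$ by $\|\phi\|_{H^1}$ throughout and take $\epsilon$ proportional to $A_p/(L\lambda_2 p)$ rather than to a fixed $c/p$; with that repair your argument goes through and coincides with the paper's.
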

	
\begin{proof}
Let $p>q\geq\frac{q_*}{2}$ satisfying 
$$
q<p=2q-\frac{nk}{2} = 2q-\frac{kq_*}{q_*-2}.
$$ 
Denote $\eta=v^{\frac{p-k}{2}}$ and  define
\begin{equation}\label{alpha}
\alpha=\frac{(p-k)(p-q)}{p(p-k-2q/q_*)}.
\end{equation}	
One easily checks that $\alpha\in(0,1)$. Indeed, 
\begin{equation*}
	\begin{split}
	p-k-\frac{2q}{q_*}>&q-\frac{2q}{q_*}-k
=\frac{q_*-2}{q_*}q-k\\
\geq&	\frac{q_*-2}{q_*}\frac{q_*}{2}-k
=\frac{2}{n-2}-k>0
	\end{split}
\end{equation*}
and on the other hand, solving $\alpha<1$ yields $p>\frac{kq_*}{q_*-2}$, which is guaranteed by $p>q_*/2$ since $\frac{q_*}{2}>\frac{kq_*}{q_*-2}$ and $k<\frac{2}{n-2}$. Moreover, since $k<\frac{2}{n-2}$,  there holds $\frac{2p\alpha}{p-k}<2$ provided that $q\geq\frac{q_*}{2}$.  Then an application of H\"older's inequality yields that
	\begin{equation*}
	\begin{split}
		\int_\Omega v^{p}dx&=\int_\Omega \eta^{\frac{2p}{p-k}}dx=\int_\Omega\eta^{\frac{2p\alpha}{p-k}}\eta^{\frac{2p(1-\alpha)}{p-k}}dx\\
		&\leq\left(\int_\Omega \eta^{q_*}dx\right)^{\frac{2p\alpha}{(p-k)q_*}}\left(\int_\Omega \eta^{\frac{2p(1-\alpha)q_*}{(p-k)q_*-2p\alpha}} \right)^{\frac{(p-k)q_*-2p\alpha}{(p-k)q_*}}\qquad(\text{since}\;\;\frac{2p\alpha}{p-k}<2<q_*)\\
		&=\| \eta\|^{\frac{2p\alpha}{p-k}}_{L^{q_*}(\Omega)}\left(\int_\Omega \eta^{\frac{2q}{p-k}}\right)^{\frac{(p-k)q_*-2p\alpha}{(p-k)q_*}}\\
			&=\| \eta\|^{\frac{2p\alpha}{p-k}}_{L^{q_*}(\Omega)}\left(\int_\Omega v^q\right)^{\frac{(p-k)q_*-2p\alpha}{(p-k)q_*}}.
	\end{split}
	\end{equation*}
Recall the Sobolev embedding inequality 
\begin{equation*}
\|\eta\|_{L^{q_*}(\Omega)}\leq \lambda_*\|\eta\|_{H^1(\Omega)}
\end{equation*}where $q_*=\frac{2n}{n-2}$ and $\lambda_*>0$ depends only on $n$ and $\Omega$. In view of the fact $\frac{2p\alpha}{p-k}<2$, invoking Young's inequality, we obtain that
\begin{equation*}
	\begin{split}
		&\lambda_2p\int_\Omega v^{p}dx\\
	\leq&\lambda_2p\| \eta\|^{\frac{2p\alpha}{p-k}}_{L^{q_*}(\Omega)}\left(\int_\Omega v^{q}\right)^{\frac{(p-k)q_*-2p\alpha}{(p-k)q_*}}\\
	\leq&\lambda_2p\bigg(\lambda_*\| \eta\|_{H^1(\Omega)}\bigg)^{\frac{2p\alpha}{p-k}}\left(\int_\Omega v^{q}\right)^{\frac{(p-k)q_*-2p\alpha}{(p-k)q_*}}\\
	\leq&\frac{p\alpha\delta^{\frac{p-k}{p\alpha}}}{p-k}\|\eta\|^2_{H^1(\Omega)}+\frac{p-k-p\alpha}{p-k}\lambda_*^{\frac{2p\alpha}{p-k-p\alpha}}\left(\delta^{-1}\lambda_2p\right)^{\frac{p-k}{p-k-p\alpha}}\left(\int_\Omega v^{q}\right)^{\frac{(p-k)q_*-2p\alpha}{(p-k-p\alpha)q_*}},
	\end{split}
\end{equation*}	
where $\delta>0$ satisfies
	\begin{equation}\label{choise_delta}
		\frac{p\alpha\delta^{\frac{p-k}{p\alpha}}}{p-k}=\frac{\lambda_1p(p-k-1)}{2L(p-k)^2}.
	\end{equation} 
	It follows from above and \eqref{alpha} that
\begin{equation}
	\begin{split}
	&\frac{p-k-p\alpha}{p-k}\lambda_*^{\frac{2p\alpha}{p-k-p\alpha}}\left(\delta^{-1}\lambda_2p\right)^{\frac{p-k}{p-k-p\alpha}}\left(\int_\Omega v^{q}\right)^{\frac{(p-k)q_*-2p\alpha}{(p-k-p\alpha)q_*}}\\
	=&\frac{p-k-p\alpha}{p-k}\left(\frac{2L\alpha(p-k)\lambda_*^2}{\lambda_1(p-k-1)}\right)^{\frac{p\alpha}{p-k-p\alpha}}\left(\lambda_2 p\right)^{\frac{p-k}{p-k-p\alpha}}\left(\int_\Omega v^{q}\right)^{\frac{(p-k)q_*-2p\alpha}{(p-k-p\alpha)q_*}}\\
	=&\frac{(q_*-2)q-kq_*}{q_*(p-k)-2q}\left(\frac{2L(p-k)^2(p-q)\lambda_*^2}{\lambda_1 p(p-k-1)(p-k-2q/q_*)}\right)^{\frac{(p-q)q_*}{q(q_*-2)-kq_*}}(\lambda_2p)^{\frac{q_*(p-k)-2q}{(q_*-2)q-kq_*}}\\
	&\times\left(\int_\Omega v^q\right)^{\frac{q_*(p-k)-2p}{q_*(q-k)-2q}}.
	\end{split}
\end{equation}	

Since $p>q\geq\frac{q_*}{2}$ satisfying 
$$
q<p=2q-\frac{nk}{2} = 2q-\frac{kq_*}{q_*-2},
$$
one easily checks that
 \begin{eqnarray*}
 \frac{q_*(p-k)-2p}{q_*(q-k)-2q}&=&2,\\
\frac{(q_*-2)q-kq_*}{q_*(p-k)-2q}&=&\frac{2}{n+2},\\
\frac{(p-q)q_*}{q(q_*-2)-kq_*}&=&\frac{n}{2},\\
\frac{q_*(p-k)-2q}{(q_*-2)q-kq_*}&=&\frac{n+2}{2},
\end{eqnarray*}
 and
 \begin{equation*}
 	\frac{p-q}{p-k-2q/q_*}=\frac{n}{n+2}.
 \end{equation*}
Moreover, since $p>\frac{q_*}{2}>1$,
\begin{equation}\non
	\begin{split}
\frac{(p-k)^2}{p(p-k-1)}=&\frac{(p-k-1)^2+2(p-k-1)+1}{p(p-k-1)}\\
=&\frac{p-k-1}{p}+\frac{2}{p}+\frac{1}{p(p-k-1)}\\
<&3+\frac{1}{p-k-1}\\
<&3+\frac{1}{\frac{2}{n-2}-k},
	\end{split}
\end{equation}and
\begin{equation}\label{llll}
	\frac{(p-k)^2}{p(p-k-1)}>\frac{p-k}{p}=1-\frac{k}{p}>1-\frac{2k}{q_*}=1-\frac{(n-2)k}{n}>0.
\end{equation}
Therefore by the above calculations, it follows
\begin{eqnarray*}
\frac{2L(p-k)^2(p-q)\lambda_*^2}{\lambda_1 p(p-k-1)(p-k-2q/q_*)}
&=&\frac{2L\lambda_*^2}{\lambda_1}
\cdot \frac{n}{n+2}
\cdot
\frac{(p-k)^2}{p(p-k-1)}\\
&<&\frac{2Ln\lambda_*^2}{\lambda_1(n+2)}
\left(3+\frac{1}{\frac{2}{n-2}-k}\right).
\end{eqnarray*}
Hence one can find   $C_0>0$ being a constant depending only on the initial data, $\Omega,k$ and $n$ such that
\begin{eqnarray*}
&&\frac{(q_*-2)q-kq_*}{q_*(p-k)-2q}
\left(\frac{2L(p-k)^2(p-q)\lambda_*^2}{\lambda_1 p(p-k-1)(p-k-2q/q_*)}\right)^{\frac{(p-q)q_*}{q(q_*-2)-kq_*}}
(\lambda_2p)^{\frac{q_*(p-k)-2q}{(q_*-2)q-kq_*}}\\
&<&\frac{2}{n+2} \cdot
\left\{\frac{2Ln\lambda_*^2}{\lambda_1(n+2)}
\left(3+\frac{1}{\frac{2}{n-2}-k}\right)
 \right\}^{\frac{n}{2}}
(\lambda_2p)^{\frac{n+2}{2}} \\
	&\leq& \frac{C_0}{2}L^{\frac{n}{2}}p^{\frac{n+2}{2}}.
\end{eqnarray*}
Therefore by the above and \eqref{choise_delta} we have
\begin{eqnarray*}
		2\lambda_2p\int_\Omega v^{p}dx
	\leq
\frac{\lambda_1p(p-k-1)}{L(p-k)^2}
	\|v^{\frac{p-k}{2}}\|^2_{H^1(\Omega)}
	+ C_0L^{\frac{n}{2}}p^{\frac{n+2}{2}}\left(\int_\Omega v^{q}\right)^{2}.
\end{eqnarray*}	
Combining Lemma \ref{lmvbd0pre1} and recalling $L>1$,
 we obtain the following inequality
\begin{equation}\non
\frac{d}{dt}\int_\Omega v^{p}+\lambda_2p\int_\Omega v^p\leq  C_0L^{\frac{n}{2}}p^{\frac{n+2}{2}}\left(\int_\Omega v^q\right)^2.
\end{equation}
\end{proof}

Now we are in a position to give a proof of Lemma \ref{lmvbd0}.
\begin{proof}
For all $r\in\mathbb{N}$ we define 
$$p_r\triangleq2^{r-1}(q_*-nk)+\frac{nk}{2},\qquad p_0=q_*/2.$$ 
Then $p_r> \frac{n}{n-2}$ and $p_r=2p_{r-1}-\frac{nk}{2}$. 
We apply Lemma \ref{lmvbd0pre2} with $(p,q)=(p_r, p_{r-1})$ to have 
\begin{equation}\non
\frac{d}{dt}\int_\Omega v^{p_r}+ \lambda_2 p_r\int_\Omega v^{p_r}\leq  \lambda_2p_r\mathcal{A}_r
\left(\mathcal{M}_{r-1}  \right)^{2},
\end{equation}
where
\begin{equation*}
\mathcal{M}_{r}\triangleq\sup\limits_{0\leq t< T_{\mathrm{max}}}\int_\Omega v^{p_r}
\quad
\mbox{and}
\quad
	\mathcal{A}_r\triangleq\frac{C_0L^{\frac{n}{2}}p_r^\frac{n}{2}}{\lambda_2}.
\end{equation*}
By solving the above ODE, it follows that for all $r\in\mathbb{N}$
\begin{equation*}
\mathcal{M}_{r} = \sup\limits_{0\leq t< T_{\mathrm{max}}}\int_\Omega v^{p_r}\leq\max\{\mathcal{A}_r\mathcal{M}_{r-1}^2,\| v_0\|_{L^\infty(\Omega)}^{p_r}   \}.
\end{equation*}
Since $p_r\geq q_*/2$ for all $r\geq1$, one can choose $L>1$ sufficiently large depending only on the initial data, $\Omega$, $n$ and $k$ such that $\mathcal{A}_r>1$ for all $r\geq1.$ Moreover,  adjusting $C_0$ by a proper larger number, we have
\begin{equation}
	\begin{split}\non
	\mathcal{A}_r	\leq C_0a^{r}
	\end{split}
\end{equation}with some $a>0$ depending only on the initial data, $\Omega,$ $k$ and $n$. 
In addition, due to Lemma \ref{wH1b}, we may find some large constant $K_0>1$  that dominates $\|v_0\|_{L^\infty}$ and $\int_\Omega v^{q_*/2}$ for all time. 

Iteratively, we deduce that
\begin{equation}
	\begin{split}
	\int_\Omega v^{p_r}\leq&\max\{\mathcal{A}_r\mathcal{A}_{r-1}^2\mathcal{M}^4_{r-2},\mathcal{A}_rK_0^{2p_{r-1}},K_0^{p_r}\}\\
	=&\max\{\mathcal{A}_r\mathcal{A}_{r-1}^2\mathcal{M}^4_{r-2},\mathcal{A}_rK_0^{2p_{r-1}}\}\\
	\leq&\dots\\
	\leq&\max\{\mathcal{A}_r\mathcal{A}_{r-1}^2\mathcal{A}_{r-2}^4\cdots\mathcal{A}_1^{2^{r-1}}\mathcal{M}_0^{2^r},\mathcal{A}_r\mathcal{A}_{r-1}^2\cdots\mathcal{A}_2^{2^{r-2}}K_0^{2^{r-1}p_1}\} \\
	\leq&\max\{\mathcal{A}_r\mathcal{A}_{r-1}^2\mathcal{A}_{r-2}^4\cdots\mathcal{A}_1^{2^{r-1}}K_0^{2^r},\mathcal{A}_r\mathcal{A}_{r-1}^2\cdots\mathcal{A}_2^{2^{r-2}}K_0^{2^{r-1}p_1}\}\\
	\leq& C_0^{2^0+2^1+\cdots+2^{r-1}}
	\times a^{1\cdot r+2(r-1)+2^2(r-2)+\cdots+2^{r-1}(r-(r-1))}\times\tilde{K}_0^{2^r}\\
	=&{ C_0^{2^r-1}} a^{2^{1+r}-r-2}\tilde{K}^{2^r}_0\non
	\end{split}
\end{equation}where $\tilde{K}=\max\{K_0,K_0^{\frac{p_1}{2}}\}$.
Finally, recalling that $p_r= 2^{r-1}(q_*-nk)+\frac{nk}{2}$,  we deduce that
\begin{equation}\non
	\|v\|_{L^\infty(\Omega)}\leq\lim\limits_{r\nearrow+\infty}\left({ C_0^{2^r-1} } a^{2^{1+r}-r-2}\tilde{K}^{2^r}_0\right)^{1/p_r}=\left(C_0a^2\tilde{K}_0\right)^{\frac{2}{q_*-nk}},
\end{equation}
which concludes the proof.
\end{proof}

Next, we turn to consider the fully parabolic case $\e>0$. 
{ Without loss of generality, we assume $\e=1$. 
For any $\e>0$ we can proceed the same lines to obtain the following lemma.}
\begin{lemma}\label{lmvbd1}
	Assume that $n\geq3$, $\e=1$.  Suppose that $\gamma$ satisfies $\mathrm{(A0)}$, $\mathrm{(A1')}$ and $\mathrm{(A2)}$ with some $k<\frac{2}{n-2}$. There is $C>0$ depending only on $\Omega$, $k,\e$ and the initial data such that
\begin{equation}\non
\sup\limits_{0\leq t<T_{\mathrm{max}}}\left(\|w\|_{L^\infty(\Omega)}+\|v\|_{L^\infty(\Omega)}\right)\leq C.
\end{equation}
\end{lemma}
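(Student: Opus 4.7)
The plan is to reproduce the Alikakos--Moser iteration of Lemma \ref{lmvbd0}, but carry it out on $w$ in place of $v$, exploiting the fact that both the key identity \eqref{keyid} and the Helmholtz relation $-\Delta w + w = u$ hold verbatim in the fully parabolic case. Once $\|w\|_{L^\infty(\Omega)}$ is controlled uniformly in time, Lemma \ref{vbd} immediately delivers the analogous bound on $v$.

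First, I would multiply the key identity \eqref{keyid} by $w^{p-1}$ and use the comparison estimate $(I-\Delta)^{-1}[u\gamma(v)] \leq \gamma(v_*)(I-\Delta)^{-1}[u] = \gamma(v_*)w$ to obtain
\begin{equation*}
\frac{1}{p}\frac{d}{dt}\int_\Omega w^p\,dx + \int_\Omega u\gamma(v)w^{p-1}\,dx \leq \gamma(v_*)\int_\Omega w^p\,dx.
\end{equation*}
To produce a useful lower bound on the dissipation $\int_\Omega u\gamma(v)w^{p-1}$, I combine the comparison $v \leq \tilde{C}(w+K)$ from Lemma \ref{vbd} with the monotonicity of $\gamma$, the algebraic control \eqref{cond_gamma}, and the strictly positive lower bound $w \geq w_*$ from Lemma \ref{lbdw}. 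A short computation gives $\gamma(v) \geq C' w^{-k}$ with $C'>0$ independent of $p$ and $t$. Substituting $u = w - \Delta w$ and integrating by parts,
\begin{equation*}
\int_\Omega u w^{p-k-1}\,dx = \int_\Omega w^{p-k}\,dx + \frac{4(p-k-1)}{(p-k)^2}\int_\Omega \left|\nabla w^{(p-k)/2}\right|^2 dx,
\end{equation*}
yields the exact analog of Lemma \ref{lmvbd0pre1} with $v$ replaced by $w$.

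Second, since the interpolation argument of Lemma \ref{lmvbd0pre2} only uses the $H^1$-regularity of $v^{(p-k)/2}$ together with the Sobolev embedding $H^1 \hookrightarrow L^{q_*}$, the identical choice of the parameter $\alpha$, the same applications of H\"older's and Young's inequalities, and the same selection of $\delta$ produce, for $p = 2q - nk/2$ with $q \geq q_*/2$,
\begin{equation*}
\frac{d}{dt}\int_\Omega w^p\,dx + \lambda_2 p\int_\Omega w^p\,dx \leq C_0 L^{n/2} p^{(n+2)/2}\left(\int_\Omega w^q\,dx\right)^{2}.
\end{equation*}

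Finally, I would run the Moser recursion $p_r = 2^{r-1}(q_* - nk) + nk/2$ exactly as in the proof of Lemma \ref{lmvbd0}, using Lemma \ref{wH1b} (valid here thanks to $\mathrm{(A1')}$) to bound the initial mass $\int_\Omega w^{q_*/2}\,dx$ uniformly in time. Passing to the limit $r \to \infty$ gives $\sup_{0 \leq t < T_{\mathrm{max}}} \|w(\cdot,t)\|_{L^\infty(\Omega)} \leq C$, and a final invocation of Lemma \ref{vbd} concludes the proof. The only genuinely new ingredient beyond the $\e=0$ case is the lower bound $\gamma(v) \geq C' w^{-k}$, which is the mechanism by which the $v$-dependent dissipation is transferred onto $w$; the remainder is routine bookkeeping, with all constants tracked to remain independent of $p$ and $t$.
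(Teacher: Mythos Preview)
Your proposal is correct and follows essentially the same route as the paper: both arguments transfer the $v$-dependent dissipation onto $w$ via the comparison $v\leq C'w$ (obtained from Lemma~\ref{vbd} together with $w\geq w_*$) and then rerun the Alikakos--Moser iteration of Lemma~\ref{lmvbd0} verbatim with $w$ in place of $v$. The only cosmetic difference is that the paper packages the transfer by introducing $\tilde w\triangleq C'w$ so that $\gamma(v)\geq\gamma(\tilde w)$ and $\tilde w-\Delta\tilde w=C'u$, whereas you extract the equivalent pointwise bound $\gamma(v)\geq C'w^{-k}$ directly; the subsequent computations are identical.
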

\begin{proof}
First of all, we note that since  $w\geq w_*>0$ by Lemma \ref{lbdw}. It follows from Lemma \ref{vbd} that
\begin{equation}\non
	v\leq C(w+1)\leq C(w+\frac{w}{w_*})=C(1+\frac1{w_*})w
\end{equation}with some $C>0$ depending only on $\Omega,\gamma,\e$ and the initial data. Hence by the non-increasing property of $\gamma$,
\begin{equation*}
	\begin{split}
		\gamma(v)\geq\gamma (C(w+1))\geq\gamma(C'w)
	\end{split}
\end{equation*}with $C'=C(1+\frac{1}{w_*})$.
 Now denoting $\w=C'w$, it follows  from \eqref{keyid} that
 \begin{equation}	 	\label{keyid2}
\w_t+C'u\gamma(\w)\leq C'(I-\Delta)^{-1}[u\gamma(v)]\leq \gamma(v_*)\w.
 \end{equation}
 Now, multiplying \eqref{keyid2} by $\w^{p-1}$, we get
\begin{equation}\non
\frac{1}{p}\frac{d}{dt}\int_\Omega \w^pdx+C'\int_\Omega u\gamma(\w)\w^{p-1}dx\leq \gamma(v_*)\int_\Omega \w^pdx.
\end{equation}	
Here, we note that $\tilde{w}-\Delta\tilde{w}=C'u.$ Then in the same manner as done in proof of Lemma \ref{lmvbd0}, one proves that there is $C>0$ depending only on the initial data, $\Omega,\e$ and $k$ such that
\begin{equation}\non
	\sup\limits_{0\leq t<T_{\mathrm{max}}}\|\tilde{w}\|_{L^\infty(\Omega)}\leq C.
\end{equation}
This completes the proof since $v\leq \tilde{w}$ point-wisely.
\end{proof}

Before concluding this section, we show the relationship between  $\mathrm{(A2)}$ and $\mathrm{(A3)}$-type conditions  provided that assumptions $\mathrm{(A0)}$ and $\mathrm{(A1)}$ are satisfied.
\begin{lemma}\label{lemA23}
	A function satisfying $\mathrm{(A0)}$, $\mathrm{(A1)}$ and  
\begin{equation}\label{A3c}
\mathrm{(A3c)}:\qquad l|\gamma'(s)|^2\leq \gamma(s)\gamma''(s),\;\;\forall\;s>0
\end{equation}
with some $l>1$ must fulfill assumption  $\mathrm{(A2)}$ with any $k>\frac{1}{l-1}$.
\end{lemma}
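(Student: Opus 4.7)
The plan is to reduce condition $\mathrm{(A3c)}$ to a scalar differential inequality for the logarithmic derivative of $\gamma$, and then integrate. Let $\varphi(s) = \log \gamma(s)$, which is well-defined and finite because of $\mathrm{(A0)}$. A direct computation gives
\[
\varphi''(s) = \frac{\gamma(s)\gamma''(s) - (\gamma'(s))^2}{\gamma(s)^2},
\]
so $\mathrm{(A3c)}$ rewrites as $\varphi''(s) \geq (l-1)(\varphi'(s))^2$. Introducing $g(s) = -\varphi'(s) = -\gamma'(s)/\gamma(s) \geq 0$, this is the same as
\[
g'(s) \leq -(l-1)\, g(s)^2, \qquad s>0.
\]

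Before using this, I need to rule out that $g$ can vanish. Since $\mathrm{(A3c)}$ forces $\gamma'' \geq 0$, the function $\gamma$ is convex, so $\gamma'$ is non-decreasing. If $\gamma'(s_0)=0$ at some $s_0$, convexity plus $\gamma'\leq 0$ would give $\gamma'\equiv 0$ on $[s_0,\infty)$, contradicting $\mathrm{(A1)}$ together with $\gamma>0$. Hence $\gamma'(s)<0$ for every $s\geq 0$, and $g(s)>0$ everywhere.

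Now the differential inequality is standard: dividing by $g^2$, the function $1/g$ satisfies $(1/g)'(s)\geq l-1$, so for any fixed $s_0>0$ and all $s>s_0$,
\[
g(s) \;\leq\; \frac{1}{\tfrac{1}{g(s_0)} + (l-1)(s-s_0)}.
\]
Integrating $g=-(\log\gamma)'$ on $[s_0,s]$ and using the above bound,
\[
\log\frac{\gamma(s_0)}{\gamma(s)} \;=\; \int_{s_0}^{s} g(\sigma)\,d\sigma \;\leq\; \frac{1}{l-1}\log\bigl(1+(l-1)g(s_0)(s-s_0)\bigr),
\]
which yields the pointwise lower bound
\[
\gamma(s) \;\geq\; \frac{\gamma(s_0)}{\bigl(1+(l-1)g(s_0)(s-s_0)\bigr)^{1/(l-1)}}\qquad \text{for all } s>s_0.
\]
In particular there exists $C>0$ such that $\gamma(s)\geq C\, s^{-1/(l-1)}$ for all sufficiently large $s$, hence $s^{k}\gamma(s)\geq C\, s^{k-1/(l-1)} \to +\infty$ as $s\to+\infty$ for every $k>1/(l-1)$, which is exactly $\mathrm{(A2)}$.

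There is no real obstacle; the only slightly delicate point is verifying that $g$ does not vanish (so that the Bernoulli-type manipulation is legitimate), and this is what forces me to invoke $\mathrm{(A1)}$ together with the convexity implied by $\mathrm{(A3c)}$. Everything else is a one-line ODE integration.
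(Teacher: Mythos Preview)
Your proof is correct and follows essentially the same route as the paper: both arguments first use convexity (from $\mathrm{(A3c)}$) together with $\mathrm{(A1)}$ to ensure $\gamma'<0$ strictly, and then integrate the differential inequality twice to obtain a lower bound $\gamma(s)\geq C\,s^{-1/(l-1)}$ for large $s$. The only cosmetic difference is that the paper tracks the quantity $-\gamma^{-l}\gamma'$ (showing it is nonincreasing and then integrating $(\gamma^{1-l})'\leq d(l-1)$), whereas you track the logarithmic derivative $g=-\gamma'/\gamma$ and solve the Bernoulli-type inequality $g'\leq -(l-1)g^2$; these are equivalent reorganizations of the same two integrations.
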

\begin{proof}
	First, we point out that under the assumptions $\mathrm{(A0)}$, $\mathrm{(A1)}$ and $\mathrm{(A3c)}$, $\gamma'(s)<0$ on $[0,\infty).$ In fact, due to $\mathrm{(A0)}$ and $\mathrm{(A3c)}$, we have $\gamma''(s)\geq0$ for all $s>0$. Then if there is $s_1\geq0$ such that $\gamma'(s_1)=0$, it must hold that $0=\gamma'(s_1)\leq \gamma'(s)\leq0$ for all $s\geq s_1$, which contradicts to the positivity of $\gamma$ in assumptions $\mathrm{(A0)}$  and the asymptotically vanishing assumption $\mathrm{(A1)}$.
	
	Now, we may divide \eqref{A3c} by $-\gamma(s)\gamma'(s)$ to obtain that
	\begin{equation*}
	-\frac{l\gamma'(s)}{\gamma(s)}\leq -\frac{\gamma''(s)}{\gamma'(s)},\;\;\;\;\forall s>0,
	\end{equation*}
	which indicates that
	\begin{equation*}
	\left(\log(-\gamma^{-l}\gamma')\right)'\leq0.
	\end{equation*}
	An integration of above ODI from $v_*$ to $s$ yields that
	\begin{equation}\non
	-\gamma^{-l}(s)\gamma'(s)\leq-\gamma^{-l}(v_*)\gamma'(v_*)\triangleq d>0,
	\end{equation}which further implies that
	\begin{equation*}
	\left(\frac{1}{(l-1)\gamma^{l-1}(s)}\right)'\leq d.
	\end{equation*}
	Thus for any $s\geq v_*$, there holds
	\begin{equation*}
	\frac{1}{\gamma^{l-1}(s)}\leq d(l-1)(s-v_*)+\frac{1}{\gamma^{l-1}(v_*)}.
	\end{equation*}
	As a result, for any $k>\frac{1}{l-1}$, we have
	\begin{equation}\non
	\frac{1}{[s^{k}\gamma(s)]^{l-1}}\leq \frac{d(l-1)(s-v_*)}{s^{k(l-1)}}+\frac{1}{s^{k(l-1)}\gamma^{l-1}(v_*)}\rightarrow0,\;\;\;\text{as}\;s\rightarrow+\infty.
	\end{equation}This completes the proof.
\end{proof}
\begin{corollary}\label{cor1a}
	Assume that $n\geq3$, $\e\geq0$ and $\gamma(\cdot)$ satisfies $\mathrm{(A0)}$, $\mathrm{(A1)}$ and  $\mathrm{(A3u)}$. Then $v$ has a uniform-in-time upper bound in $\Omega\times[0,T_{\mathrm{max}}).$
\end{corollary}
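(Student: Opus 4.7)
The plan is to deduce Corollary \ref{cor1a} as a direct consequence of Lemma \ref{lemA23} combined with Proposition \ref{Propunbv}. The key observation is that assumption $\mathrm{(A3u)}$ is simply $\mathrm{(A3c)}$ with the constant $l=l_0>\frac{n}{2}$, so Lemma \ref{lemA23} is immediately applicable.

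First I would verify the compatibility of the exponents. Since $l_0>\frac{n}{2}$, one has $l_0-1>\frac{n-2}{2}$, hence $\frac{1}{l_0-1}<\frac{2}{n-2}$. Therefore we may select some $k$ with
\begin{equation*}
\frac{1}{l_0-1}<k<\frac{2}{n-2}.
\end{equation*}
By Lemma \ref{lemA23}, $\gamma$ satisfies $\mathrm{(A2)}$ with this choice of $k$, that is, $\lim\limits_{s\to\infty}s^k\gamma(s)=+\infty$ with $k<\frac{2}{n-2}$, which is precisely the decay assumption required by Proposition \ref{Propunbv}.

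Next I would address the discrepancy between $\mathrm{(A1)}$ and $\mathrm{(A1')}$ needed in the fully parabolic case. Since $\mathrm{(A1)}$ asserts $\lim\limits_{s\to\infty}\gamma(s)=0$, we have $\lim\limits_{s\to\infty}\gamma(s)=0<\frac{1}{\e}$ for any $\e>0$, so $\mathrm{(A1')}$ holds automatically. Thus all the hypotheses of Proposition \ref{Propunbv} are fulfilled, both when $\e=0$ and when $\e>0$. Applying the proposition yields the uniform-in-time upper bound of $v$ in $\Omega\times[0,T_{\mathrm{max}})$.

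There is essentially no obstacle here; the corollary is a bookkeeping consequence of how the constants are arranged. The only point that requires a line of verification is the strict inequality $\frac{1}{l_0-1}<\frac{2}{n-2}$, which is exactly why the strict bound $l_0>\frac{n}{2}$ is imposed in $\mathrm{(A3u)}$.
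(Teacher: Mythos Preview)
Your proof is correct and follows essentially the same route as the paper: both argue that $l_0>\frac{n}{2}$ gives $\frac{1}{l_0-1}<\frac{2}{n-2}$, invoke Lemma~\ref{lemA23} to obtain $\mathrm{(A2)}$ with some $k<\frac{2}{n-2}$, and then appeal to the uniform bound for $v$ (the paper cites Lemmas~\ref{lmvbd0} and~\ref{lmvbd1} directly, while you cite the equivalent Proposition~\ref{Propunbv}). Your additional remark that $\mathrm{(A1)}$ implies $\mathrm{(A1')}$ is a harmless clarification.
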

\begin{proof}
	Note that $\frac{1}{l_0-1}<\frac{2}{n-2}$ when $l_0>\frac{n}{2}$. Thus $\gamma$ satisfies $\mathrm{(A2)}$ with some $k<\frac{2}{n-2}$ and due to Lemma \ref{lmvbd0} and Lemma \ref{lmvbd1}, $v$ has a uniform-in-time upper bound.
\end{proof}

\section{The parabolic-elliptic case}
This section is devoted to the proof of Theorem \ref{TH1}. With the upper bound of $v$ at hand, in view of Lemma \ref{criterion}, it suffices to establish an estimate for the weighted energy $\int_\Omega u^p\gamma^q(v)$ for some $p>\frac{n}{2}$ and $q>0.$
\subsection{Global existence}
 First, we prove existence of global classical solutions which is given by the following lemma.
\begin{lemma}
	Assume that $\e=0$ and $\gamma(\cdot)$ satisfies $\mathrm{(A0)}$ and 
	 $\mathrm{(A3a)}$. Then for any given $0<T<T_{\mathrm{max}}$, 
	 there {exist $p>\frac{n}{2}-1$ and $C_T>0$} such that
	\begin{equation*}
		\sup\limits_{0\leq t\leq T}\int_\Omega u^{1+p}\leq C_T,
	\end{equation*}where $p$ may depend on $n$, $\Omega$, $\gamma$ and $T$.
\end{lemma}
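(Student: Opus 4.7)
The plan is to control the weighted energy $E(t) = \int_\Omega u^{1+p}\gamma^q(v)\,dx$ for suitable exponents $p > \tfrac{n}{2}-1$ and $q > 0$, and to derive a Gronwall-type bound on $[0,T]$. Since $\e=0$, the auxiliary variable $w$ coincides with $v$, so Lemma \ref{keylem1} immediately yields a (time-dependent) upper bound $v \leq M_T$ on $[0,T]$. Combined with the lower bound $v \geq v_* > 0$ from Lemma \ref{lbdv}, the function $v$ takes values in the compact set $[v_*, M_T]$. Because the inequality in $\mathrm{(A3a)}$ is strict and $\gamma, \gamma', \gamma''$ are continuous, by compactness there exists $l_T > \sqrt{n/2}$ such that $l_T|\gamma'(s)|^2 \leq \gamma(s)\gamma''(s)$ for all $s \in [v_*, M_T]$. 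Since $l_T^2 - 1 > \tfrac{n}{2}-1$, we can pick $p \in (\tfrac{n}{2}-1,\, l_T^2 - 1]$ together with a suitable $q > 0$ (for instance $q = pl_T/2$) which will satisfy the algebraic inequality $4pq(1+p+q+l_T) \geq (1+p)(p+2q)^2$ arising below.

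Next I would differentiate $E(t)$ using $u_t = \Delta(\gamma(v)u)$ and the consequence $v_t = \gamma(v)\Delta v - \gamma(v)v + (I-\Delta)^{-1}[\gamma(v)u]$ of the key identity \eqref{keyid} (valid for $\e=0$, since then $w = v$ and $u = v - \Delta v$). Integration by parts on the $u_t$ contribution and on the $\gamma\Delta v$ piece arising from $v_t$ yields, after collecting,
\begin{align*}
E'(t)
&= -p(1+p)\int_\Omega u^{p-1}\gamma^{q+1}|\nabla u|^2\,dx - (1+p)(p+2q)\int_\Omega u^p\gamma^q\gamma'\,\nabla u\cdot\nabla v\,dx \\
&\quad - q\int_\Omega u^{1+p}\gamma^{q-1}\{(1+p+q)|\gamma'|^2 + \gamma\gamma''\}|\nabla v|^2\,dx \\
&\quad - q\int_\Omega u^{1+p}\gamma^q\gamma' v\,dx + q\int_\Omega u^{1+p}\gamma^{q-1}\gamma'(I-\Delta)^{-1}[\gamma(v)u]\,dx.
\end{align*}
The last summand has $\gamma' \leq 0$ and $(I-\Delta)^{-1}[\gamma(v)u] \geq 0$, hence is non-positive and can be discarded; the penultimate summand is non-negative but, using boundedness of $v$, $\gamma$ and $|\gamma'|$ on $[v_*,M_T]$, is estimated by $C_T E(t)$.

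The crucial step is to show that the first three terms form a pointwise non-positive quadratic form in $(\nabla u, \nabla v)$. Writing it as $A|\nabla u|^2 + 2B\,\nabla u \cdot \nabla v + C|\nabla v|^2$, non-positivity reduces to $A \leq 0$, $C \leq 0$ (both clear under $\mathrm{(A3a)}$) together with $AC \geq B^2$. Inserting the lower bound $\gamma\gamma'' \geq l_T|\gamma'|^2$ into $C$, the condition $AC \geq B^2$ collapses to $4pq(1+p+q+l_T) \geq (1+p)(p+2q)^2$, which as a quadratic inequality in $q$ has real solutions precisely when $p \leq l_T^2 - 1$; our chosen $p$ and $q$ lie in the corresponding admissible window. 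Combining the three parts we arrive at $E'(t) \leq C_T E(t)$, and Gronwall gives $E(t) \leq C_T$ on $[0,T]$; since $\gamma(v) \geq \gamma(M_T) > 0$ by monotonicity of $\gamma$, we conclude $\int_\Omega u^{1+p}(\cdot,t)\,dx \leq \gamma(M_T)^{-q} E(t) \leq C_T$, as desired.

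The main obstacle lies precisely in the algebraic step above: one needs $p > \tfrac{n}{2} - 1$ and $q > 0$ simultaneously satisfying $4pq(1+p+q+l_T) \geq (1+p)(p+2q)^2$, which forces the use of the strictly better constant $l_T > \sqrt{n/2}$ rather than $\sqrt{n/2}$ itself. This is exactly where the strictness in $\mathrm{(A3a)}$ becomes indispensable, leveraged through compactness of the range of $v$ on the fixed interval $[0,T]$; with only a non-strict inequality one would merely reach $p = \tfrac{n}{2} - 1$, which would be insufficient to invoke the bootstrap in Lemma \ref{criterion} and conclude global existence.
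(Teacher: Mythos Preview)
Your proposal is correct and follows essentially the same approach as the paper's own proof: both control the weighted energy $\int_\Omega u^{1+p}\gamma^q(v)$, derive the same identity for its time derivative (the paper arrives at it via multiplying the key identity \eqref{keyid} by $qu^{p+1}\gamma^{q-1}\gamma'$, you via the equivalent equation \eqref{kid0} for $v_t$), and then exploit compactness of the range of $v$ on $[0,T]$ together with the strict inequality in $\mathrm{(A3a)}$ to select admissible $p>\tfrac{n}{2}-1$ and $q$. The only cosmetic difference is that you phrase the control of the cross term via the discriminant condition $AC\geq B^2$, whereas the paper applies Young's inequality with the optimal constant; these lead to the identical algebraic constraint $(1+p+4\lambda^2)\le 4\lambda l_T$ with $q=\lambda p$, whose optimum $\lambda=l_T/2$ gives exactly your choice $q=pl_T/2$ and the threshold $p\le l_T^2-1$.
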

\begin{proof}
Recall that $v=w$ when $\e=0.$ Multiplying  the key identity \eqref{keyid} by $qu^{p+1}\gamma^{q-1}(v)\gamma'(v)$ with $p,q>0$ to be specified later and integrating with respect to $x$ yields
\begin{equation}\label{e3}
\begin{split}
\frac{d}{dt}\int_{\Omega}u^{p+1}\gamma^q(v)dx-(p+1)\int_{\Omega}\gamma^q(v)u^pu_tdx-q\int_{\Omega}u^{p+1}\gamma^q(v)\gamma'(v)\Delta vdx\\
-q\int_{\Omega}(I-\Delta)^{-1}[u\gamma(v)]  u^{p+1}\gamma^{q-1}(v)\gamma'(v)dx= -q\int_{\Omega}u^{p+1}\gamma^q(v)\gamma'(v)vdx,
\end{split}
\end{equation}
where we used the fact that $-\Delta v+v=u$.

By the first equation of \eqref{chemo1} and integration by parts, we infer that
\begin{equation}
\begin{split}\label{inte0}
&-(p+1)\int_{\Omega}\gamma^q(v)u^pu_tdx\\
=&-(p+1)\int_{\Omega}\gamma^q(v)u^p \Delta (\gamma (v)u)dx\\
=&(p+1)\int_{\Omega}\left(\gamma (v)\nabla u+\gamma'(v)u\nabla v\right)\left(pu^{p-1}\gamma^q(v)\nabla u+qu^p\gamma^{q-1}(v)\gamma'(v)\nabla v\right)dx\\
=&p(p+1)\int_{\Omega}u^{p-1}\gamma^{q+1}(v)|\nabla u|^2dx  +q(p+1)\int_{\Omega}u^{p+1}\gamma ^{q-1}(v)|\gamma'(v)|^2|\nabla v|^2dx\\
&+(p+1)(p+q)\int_{\Omega}u^p\gamma^q(v)\gamma'(v)\nabla u\cdot\nabla vdx, 
\end{split}
\end{equation}
and by integration by parts again,
\begin{equation}\label{e4}
\begin{split}
&-q\int_{\Omega}u^{p+1}\gamma^q(v)\gamma'(v)\Delta vdx\\
=&q^2\int_{\Omega} u^{p+1}\gamma^{q-1} (v)|\gamma'(v)|^2|\nabla v|^2dx+q\int_{\Omega}u^{p+1}\gamma^q\gamma''(v)|\nabla v|^2dx\\
&+q(p+1)\int_{\Omega}u^p\gamma^q (v)\gamma'(v) \nabla u\cdot\nabla vdx.
\end{split}
\end{equation}
Then we arrive at
\begin{equation}
\begin{split}\label{est0}
&\frac{d}{dt}\int_{\Omega}u^{p+1}\gamma^q(v)dx+(p+1)p\int_{\Omega}u^{p-1}\gamma^{q+1}|\nabla u|^2dx\\
&+q\int_{\Omega}\bigg((p+q+1)|\gamma'(v)|^2+\gamma\gamma''\bigg)u^{p+1}\gamma^{q-1}|\nabla v|^2dx\\
&-q\int_{\Omega}(I-\Delta)^{-1}[u\gamma(v)]  u^{p+1}\gamma^{q-1}(v)\gamma'(v)dx\\
=&-(p+1)(p+2q)\int_{\Omega}u^p\gamma^q(v)\gamma'(v)\nabla u\cdot\nabla vdx  -q\int_{\Omega}u^{p+1}\gamma^q(v)\gamma'(v)vdx.
\end{split}
\end{equation}
Now applying Young's inequality, we infer that
\begin{equation}
	\begin{split}\non
	&-(p+1)(p+2q)\int_{\Omega}u^p\gamma^q(v)\gamma'(v)\nabla u\cdot\nabla vdx\\
	&\leq (p+1)p\int_{\Omega}u^{p-1}\gamma^{q+1}|\nabla u|^2dx+\frac{(p+1)(p+2q)^2}{4p}\int_\Omega u^{1+p}\gamma^{q-1}|\gamma'|^2|\nabla v|^2dx.
	\end{split}
\end{equation}
We further require that
\begin{equation}
\begin{split}\label{cond00}
	&\frac{(p+1)(p+2q)^2}{4p}\int_\Omega u^{1+p}\gamma^{q-1}|\gamma'|^2|\nabla v|^2dx\\
	&\leq q\int_{\Omega}\bigg((p+q+1)|\gamma'(v)|^2+\gamma\gamma''\bigg)u^{p+1}\gamma^{q-1}|\nabla v|^2dx,
\end{split}
\end{equation}
which is satisfied provided that
\begin{equation}
	\begin{split}\label{cond0}
	(p^2+p^3+4q^2)|\gamma'|^2\leq 4pq\gamma\gamma''\qquad\text{a.e.}
	\end{split}
\end{equation}
Next, letting $q=\lambda p$ with some $\lambda>0$, then \eqref{cond0} is equivalent to the following
\begin{equation}\label{cond1}
	(1+p+4\lambda^2)|\gamma'|^2\leq 4\lambda\gamma\gamma''\qquad\text{a.e.}
\end{equation}
Note that $	\frac{1+p+4\lambda^2}{4\lambda}$ attains its minimum value $\sqrt{1+p}$ when $\lambda=\frac{\sqrt{1+p}}{2}$.

For any given $0<T<T_{\mathrm{max}}$,  due to Lemma \ref{lbdw} and Lemma \ref{keylem1}, $v$ is bounded on $[0,T]\times\overline{\Omega}$ from above and below by some  strictly positive constants depending only on the initial data, $\gamma$, $T$ and $\Omega$, which is also true for $|\gamma'(v)|^2$ and $\gamma(v)\gamma''(v)$ on $[0,T]\times\overline{\Omega}$ due to our assumption on $\gamma$. Then under the assumption $(\mathrm{A3a})$, one can always find $p>\frac{n}{2}-1$ and $\lambda=\frac{\sqrt{1+p}}{2}$ such that \eqref{cond1} holds on $[0,T]\times\overline{\Omega}$. As a result, 
one obtains that 
\begin{equation}
\begin{split}\label{est2}
&\frac{d}{dt}\int_{\Omega}u^{p+1}\gamma^q(v)dx-q\int_{\Omega}(I-\Delta)^{-1}[u\gamma(v)]  u^{p+1}\gamma^{q-1}(v)\gamma'(v)dx\\
\leq& -q\int_{\Omega}u^{p+1}\gamma^q(v)\gamma'(v)vdx.
\end{split}
\end{equation}
Then by Gronwall's inequality, we get 
\begin{equation}\non
	\int_\Omega u^{p+1}\gamma^q(v)dx\leq C_T.
\end{equation}
This concludes the proof since $\gamma^q(v)$ is bounded from below.
\end{proof}
\begin{corollary}\label{cor1} Assume  $\e=0$, $\gamma(v)=v^{-k}$ and $n\geq3$. Then there exists a unique global classical solution provided that $k<\frac{\sqrt{2n}+2}{n-2}$.
\end{corollary}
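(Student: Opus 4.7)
The plan is to read this corollary as a direct specialization of the preceding lemma (together with Lemma \ref{criterion} and Theorem \ref{TH1}) to the toy motility $\gamma(v)=v^{-k}$. The only substantive task is to translate the abstract condition $(\mathrm{A3a})$ into a sharp inequality on the exponent $k$, and to absorb the apparent incompatibility between the singularity of $v^{-k}$ at $v=0$ and the smoothness requirement in $(\mathrm{A0})$.

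First I would compute, for $\gamma(v)=v^{-k}$ with $k>0$, that $\gamma'(v)=-kv^{-k-1}$ and $\gamma''(v)=k(k+1)v^{-k-2}$, so that
\begin{equation*}
\frac{\gamma(v)\gamma''(v)}{|\gamma'(v)|^2}=\frac{k+1}{k}, \qquad v>0.
\end{equation*}
Hence $(\mathrm{A3a})$, namely $\sqrt{n/2}\,|\gamma'|^2<\gamma\gamma''$, is equivalent to $\sqrt{n/2}\,k<k+1$, i.e.\ $k(\sqrt{n/2}-1)<1$. Solving this for $k$ yields exactly
\begin{equation*}
k<\frac{1}{\sqrt{n/2}-1}=\frac{2}{\sqrt{2n}-2}=\frac{\sqrt{2n}+2}{n-2},
\end{equation*}
which is precisely the hypothesis of the corollary.

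Next I would address the singularity of $v^{-k}$ at the origin, which formally violates the $C^3[0,\infty)$ requirement in $(\mathrm{A0})$. By Lemma \ref{lbdv} the solution $v$ admits a strictly positive time-independent lower bound $v_*>0$ on $\overline{\Omega}\times[0,T_{\mathrm{max}})$, so I would replace $\gamma(s)=s^{-k}$ by a $C^3[0,\infty)$ function $\tilde{\gamma}(s)$ coinciding with $s^{-k}$ on $[v_*/2,\infty)$ and non-increasing on $[0,\infty)$ (as explicitly noted in the remark following Theorem \ref{TH2}). Since $v\geq v_*$, the original system and its regularized counterpart are identical along the solution, and $\tilde{\gamma}$ satisfies $(\mathrm{A0})$ together with $(\mathrm{A3a})$ in view of the computation above.

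With these verifications in place, the preceding lemma supplies, for every $0<T<T_{\mathrm{max}}$, an exponent $p>\tfrac{n}{2}-1$ and a constant $C_T$ with $\sup_{0\leq t\leq T}\|u(\cdot,t)\|_{L^{1+p}(\Omega)}\leq C_T$. Since $1+p>\tfrac{n}{2}$, Lemma \ref{criterion} immediately excludes finite-time blow-up, yielding $T_{\mathrm{max}}=\infty$ and thus the unique global classical solution claimed. No genuinely new estimate is needed; the only subtle point is to check that the optimal choice $\lambda=\tfrac{\sqrt{1+p}}{2}$ in the preceding lemma is indeed compatible with $p>\tfrac{n}{2}-1$ precisely when $k<\tfrac{\sqrt{2n}+2}{n-2}$, which is exactly the quantitative content of the algebra above.
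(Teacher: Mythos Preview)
Your proposal is correct and is exactly the specialization the paper intends: the corollary is stated without proof immediately after the global-existence lemma, so verifying that $\gamma(v)=v^{-k}$ satisfies $(\mathrm{A3a})$ precisely when $k<\tfrac{\sqrt{2n}+2}{n-2}$ and then invoking that lemma together with Lemma~\ref{criterion} is the entire argument. One cosmetic remark: since $\e=0$ here, $v$ coincides with $w$ and the positive lower bound for $v$ comes from Lemma~\ref{lbdw} rather than Lemma~\ref{lbdv}; this is the relevant reference for the regularization $\tilde{\gamma}$ you invoke.
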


\subsection{Uniform-in-time boundedness}
In this part we prove the uniform-in-time boundedness in Theorem \ref{TH1}. To this aim, we  establish time-independent bounds for the weighted energy.
\begin{lemma}
Assume that $\e=0$, $\gamma(\cdot)$ satisfies $\mathrm{(A0)}$, $\mathrm{(A1)}$ and  $\mathrm{(A3u)}$. The there holds
\begin{equation}
	\sup\limits_{t\geq0}\int_\Omega u^pdx\leq C
\end{equation}with $p>\frac{n}{2}$ and $C>0$ depending only on the initial data, $\Omega$ and $\gamma.$
\end{lemma}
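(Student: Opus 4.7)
The plan is to emulate the computation from the preceding (global-existence) lemma, but now with time-independent constants, made possible by the uniform bound on $v$. First, the hypotheses $\mathrm{(A0)}$--$\mathrm{(A1)}$--$\mathrm{(A3u)}$ trigger Corollary \ref{cor1a}, yielding $\sup_{t\ge 0}\|v\|_{L^\infty(\Omega)}\le v^*$. Combined with Lemma \ref{lbdv}, this gives $v_*\le v\le v^*$ uniformly in $\Omega\times[0,T_{\mathrm{max}})$, so $\gamma(v)$, $|\gamma'(v)|$, and $\gamma''(v)$ all enjoy time-independent two-sided bounds, and the quotient $\gamma\gamma''/|\gamma'|^2\ge l_0>n/2$ holds throughout without any time-dependent deterioration of constants.

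I would then redo the computation leading to identity \eqref{est0}, which is legitimate here because $v=w$ when $\e=0$: test the key identity \eqref{keyid} against $qu^{p+1}\gamma^{q-1}(v)\gamma'(v)$, and expand $u_t$ via the first equation of \eqref{chemo1}. Writing $q=\lambda p$ and applying Young's inequality to the cross term $-(p+1)(p+2q)\int u^p\gamma^q\gamma'\nabla u\cdot\nabla v$ reduces the matter to the algebraic condition $1+p+4\lambda^2\le 4\lambda l_0$ (using $\mathrm{(A3u)}$ to dominate $|\gamma'|^2$ by $\gamma\gamma''/l_0$). Optimizing at $\lambda=l_0/2$ admits any $p<l_0^2-1$; since $l_0>n/2$, a choice $p>n/2$ is available for $n\ge 4$, while for $n=3$ one first secures an estimate with $p>n/2-1$ (as in the previous lemma, now uniform in time since the $v$-bound is uniform) and upgrades via parabolic regularity applied to $u_t=\Delta(\gamma(v)u)$ using $\gamma(v)\in[\gamma(v^*),\gamma(v_*)]$.

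After absorption, the differential inequality takes the shape
\[
\frac{d}{dt}\int_\Omega u^{p+1}\gamma^q(v)\,dx\,+\,c\int_\Omega\bigl|\nabla u^{(p+1)/2}\bigr|^2\,dx \;\le\; C\int_\Omega u^{p+1}\,dx,
\]
with the additional nonnegative term $-q\int(I-\Delta)^{-1}[u\gamma(v)]\,u^{p+1}\gamma^{q-1}\gamma'\ge 0$ retained on the left (since $\gamma'\le 0$). Invoking Gagliardo--Nirenberg interpolation together with the conserved mass $\|u(\cdot,t)\|_{L^1}=\|u_0\|_{L^1}$, one controls $\int u^{p+1}$ by a superlinear power of the dissipation plus a bounded constant, producing a differential inequality of the form $X'(t)+cX(t)^{1+\delta}\le C$ with $X(t)=\int_\Omega u^{p+1}\gamma^q(v)$. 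An elementary ODE comparison then yields the uniform-in-time bound for $X$, and since $\gamma^q(v)$ is bounded from below, the desired estimate $\sup_{t\ge 0}\int_\Omega u^{p+1}\le C$ follows.

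The main obstacle I anticipate is the exponent bookkeeping in Gagliardo--Nirenberg to ensure that the absorbing power is genuinely superlinear (rather than linear, which would only yield exponential growth in time): this is delicate when $p$ is close to $n/2$ and $l_0$ is only slightly larger than $n/2$, since the absorption slack in $1+p+4\lambda^2\le 4\lambda l_0$ is then small. In that regime, I would lean on the auxiliary positive term $-q\int(I-\Delta)^{-1}[u\gamma(v)]u^{p+1}\gamma^{q-1}\gamma'$, which morally contributes an extra dissipation thanks to $\gamma'\le 0$, to close the estimate.
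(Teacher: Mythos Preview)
Your approach is essentially correct but differs from the paper's. After deriving the same improved inequality \eqref{est2b} with a strictly positive dissipation term $\delta_0 p(p+1)\int_\Omega u^{p-1}\gamma^{q+1}|\nabla u|^2$, the paper does \emph{not} jump to a single large exponent via Gagliardo--Nirenberg against mass conservation. Instead it bootstraps iteratively: starting from the time-averaged $L^2$ bound of Lemma~\ref{wH1b}, it takes $1+p=2$, applies the uniform Gronwall inequality (Lemma~\ref{uniformGronwall}) to obtain $\sup_t\int_\Omega u^2<\infty$ together with a time-averaged bound on $\int_\Omega|\nabla u|^2$, then feeds this into the embedding \eqref{emb0} to get a time-averaged $L^{r_*}$ bound with $r_*=2+4/n$, and repeats with $1+p=r_*$, then $1+p=r_*^2/2$, and so on until the exponent exceeds $n/2$. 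Your one-shot ODE argument $X'+cX^{1+\delta}\le C$ is more direct; the paper's iteration, by contrast, never needs the extended Gagliardo--Nirenberg inequality with sub-unit lower exponent (the $\|u^{(p+1)/2}\|_{L^{2/(p+1)}}$ anchor with $2/(p+1)<1$), relying only on the standard embedding \eqref{emb0} and the uniform Gronwall lemma.

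Two minor points in your write-up deserve cleaning up. First, the exponent target is $p+1>n/2$ (to match the statement's $\int_\Omega u^p$ with $p>n/2$), not $p>n/2$; since $l_0>n/2\ge 3/2$ implies $l_0^2>l_0>n/2$, the constraint $p+1<l_0^2$ already admits a suitable exponent for every $n\ge 3$, so the separate $n=3$ detour through parabolic regularity is unnecessary. Second, your worry about superlinearity is unfounded: interpolating $\|u^{(p+1)/2}\|_{L^2}$ between $H^1$ and $L^{2/(p+1)}$ always yields the exponent $1/\alpha=1+2/(np)>1$, so the absorption is genuinely superlinear regardless of how close $l_0$ is to $n/2$, and the auxiliary nonnegative term $-q\int_\Omega(I-\Delta)^{-1}[u\gamma]\,u^{p+1}\gamma^{q-1}\gamma'$ is never needed to close the estimate.
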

\begin{proof}
	Under our assumption, condition \eqref{cond1} holds for any $p>1$ such that
\begin{equation}\non
	\frac{1+p+4\lambda^2}{4\lambda}\leq l_0
\end{equation} holds with some $\lambda>0.$
Define \[f(\lambda)=4\lambda l_0-4\lambda^2\] for all $\lambda>0$. We observe that $f(\lambda)$ attains its maximum value $l_0^2$ at $\lambda_0=l_0/2$. Since $l_0>\frac{n}{2}$, there holds
	\begin{equation*}
	l_0^2>\frac{n^2}{4}.
	\end{equation*}
Thus, for any $1+p\in(1,\frac{n^2}{4}]$ and $\lambda=\lambda_0$, there holds
\begin{equation}
	1+p\leq\frac{n^2}{4}<l_0^2=f(\lambda_0).
\end{equation}
In other words, there holds
\begin{equation*}
\frac{1+p+4\lambda_0^2}{4\lambda_0}|\gamma'|^2< l_0|\gamma'|^2\leq \gamma\gamma'',\;\;\forall\;s>0
\end{equation*}
for any	$1+p\in(1,\frac{n^2}{4}]$. In particular, 
 recalling the time-independent lower and upper bounds for $v$  given by Corollary \ref{cor1a},
\begin{equation*}
v_*\leq v(x,t)\leq v^* \qquad\text{on}\;\;\overline{\Omega}\times [0,\infty)
\end{equation*}with $v_*,v^*>0$,
 we infer that
\begin{equation*}
\frac{1+p+4\lambda_0^2}{4\lambda_0}|\gamma'(v(x,t))|^2<  \gamma(v(x,t))\gamma''(v(x,t)),\;\;\text{on}\;\overline{\Omega}\times[0,\infty)
\end{equation*}for any	$1+p\in(1,\frac{n^2}{4}]$. In addition, for any	$1+p\in(1,\frac{n^2}{4}]$, we can further find time-independent $\delta_0=\delta_0(p,\lambda_0)>0$ such that
\begin{equation*}
\frac{1+p+4\lambda_0^2+4\lambda_0\delta_0(1+p+\lambda_0p)}{4\lambda_0(1-\delta_0)}|\gamma'(v(x,t))|^2<  \gamma(v(x,t))\gamma''(v(x,t)),\;\;\text{on}\;\overline{\Omega}\times[0,\infty).
\end{equation*}
As a result, based on a similar argument as from \eqref{cond00} to \eqref{cond1}, we have
\begin{equation}\non
	\frac{(p+1)(p+2q)^2}{4p(1-\delta_0)}\int_\Omega u^{1+p}\gamma^{q-1}|\gamma'|^2|\nabla v|^2\leq q\int_{\Omega}\bigg((p+q+1)|\gamma'(v)|^2+\gamma\gamma''\bigg)u^{p+1}\gamma^{q-1}|\nabla v|^2
\end{equation}with $q=\lambda_0p.$
Thus by Young's inequality,
\begin{equation}
\begin{split}
&-(p+1)(p+2q)\int_{\Omega}u^p\gamma^q(v)\gamma'(v)\nabla u\cdot\nabla vdx\\
&\leq (p+1)p(1-\delta_0)\int_{\Omega}u^{p-1}\gamma^{q+1}|\nabla u|^2dx\\&+\frac{(p+1)(p+2q)^2}{4p(1-\delta_0)}\int_\Omega u^{1+p}\gamma^{q-1}|\gamma'|^2|\nabla v|^2dx,
\end{split}
\end{equation} 
 we obtains an improved version of \eqref{est2} as follows
\begin{equation}
\begin{split}\label{est2b}
&\frac{d}{dt}\int_{\Omega}u^{p+1}\gamma^q(v)dx+\delta_0(p+1)p\int_{\Omega}u^{p-1}\gamma^{q+1}|\nabla u|^2dx\\
&\;\;-q\int_{\Omega}(I-\Delta)^{-1}[u\gamma(v)]  u^{p+1}\gamma^{q-1}(v)\gamma'(v)dx\\
\leq& -q\int_{\Omega}u^{p+1}\gamma^q(v)\gamma'(v)vdx
\end{split}
\end{equation}with any $1+p\in(1,\frac{n^2}{4}]$, $q=\frac{pl_0}{2}$ and some $\delta_0=\delta_0(p,l_0)>0.$

Now,  recalling Lemma \ref{wH1b} and the time-independent boundedness of $v$, there holds
\begin{equation}\label{est3}
\sup\limits_{t\geq0}\int_t^{t+1}\int_\Omega u^2dxds\leq C
\end{equation}with $C>0$ depending only on the initial data, $\Omega$ and $\gamma$.

Next, we take $p=1$ such that $1+p=2<\frac{n^2}{4}$ and $q=\frac{l_0}{2}$ in \eqref{est2b}. Since now $v$  is bounded from above and below, we obtain that
\begin{equation}\label{est2bb}
	\frac{d}{dt}\int_\Omega u^{2} v^{-\frac{l_0}{2}}dx+C\int_\Omega |\nabla u|^2dx\leq C\int_\Omega u^{2}dx
\end{equation}with $C>0$ independent of time.

In view of \eqref{est3}, an application of the uniform Gronwall inequality together with the local boundedness yields that
\begin{equation}\non
\sup\limits_{t\geq0}\int_\Omega u^2dx\leq C.
\end{equation}
Besides, an integration of \eqref{est2bb}  from $t$ to $t+1$ further gives rise to
\begin{equation}\non
\sup\limits_{t\geq0}	\int_t^{t+1}\int_\Omega|\nabla u|^2dxds\leq C.
\end{equation}
Thus, by the Sobolev embedding 
\begin{equation}\label{emb0}
	\|\xi\|_{L^{r_*}(\Omega)}^{r_*}\leq C\|\nabla \xi\|_{L^2(\Omega)}^2\|\xi\|_{L^2(\Omega)}^{r_*-2}+C\|\xi\|^{r_*}_{L^1(\Omega)}
\end{equation}
with $r_*=2+\frac{4}{n}$, we infer that
\begin{equation}\non
\sup\limits_{t\geq0}\int_t^{t+1}\int_\Omega u^{r_*}\leq C.
\end{equation}

Then, we divide the discussion into several cases regarding the spatial dimensions. First, when $n=3$, one notes that $r_*=2+\frac{4}{n}=\frac{10}{3}>\frac{9}{4}=\frac{n^2}{4}$ and  we may take $1+p=\frac{9}{4}$ and $q=\frac{5l_0}{8}$ in \eqref{est2b}. In the same manner as before, by the uniform  Gronwall inequality, we deduce that
\begin{equation}
\sup\limits_{t\geq0}\int_\Omega u^{\frac{9}{4}}dx\leq C.\non
\end{equation}

When $n\geq4$, there holds $r_*=2+\frac{4}{n}<\frac{n^2}{4}$. We can take $1+p=r_*$ and $q=\frac{l_0}{2}(r_*-1)$ in \eqref{est2b} to obtain in the same manner as before that
\begin{equation}\label{estb0}
	\sup\limits_{t\geq0}\left(\int_\Omega u^{r_*}+\int_t^{t+1}\int_\Omega |\nabla u^{\frac{r_*}{2}}|^2dxds\right)\leq C.
\end{equation}
Note that when $n=4,5$, we have $r_*=2+\frac{4}{n}>\frac{n}{2}$.

It remains to consider the case $n\geq6$. First,  using the embedding \eqref{emb0} with $\xi=u^{\frac{r_*}{2}}$,  we infer from \eqref{estb0} that \begin{equation}\non
\sup\limits_{t\geq0}\int_t^{t+1}\int_\Omega u^{\frac{r_*^2}{2}}dxds\leq C.
\end{equation} 
On the other hand when $n\geq6$, one can always find   $m\in\mathbb{N}$ such that $\frac{n}{2}<\frac{r_*^m}{2}\leq \frac{n^2}{4}$. Indeed, let $m$ be the integer such that $\frac{r_*^m}{2}\leq\frac{n^2}{4}$ and $\frac{r_*^{m+1}}{2}>\frac{n^2}{4}$. Then we observe that
\begin{equation}\non
\frac{r_*^m}{2}>\frac{n^2}{4r_*}=\frac{n}{2}\times\frac{n}{2r_*}
\end{equation}
where $\frac{n}{2r_*}>1$ if $n\geq6.$

Using the embedding \eqref{emb0} with $\xi=u^{\frac{r_*^{l}}{2}}$ with $l=1,2,...,m-1$, repeating the above steps, we can finally prove that
\begin{equation}\non
	\sup\limits_{t\geq0}\int_\Omega u^{\frac{r_*^m}{2}}dx\leq C.
\end{equation}
This completes the proof.
\end{proof}

\section{The fully parabolic case}
In this section, we consider the fully parabolic case and give a proof for Theorem \ref{TH2}. The idea is basically a generalization of  \cite[Lemma 5.5]{FJ19b} to higher dimensions. Indeed, we list out a system of estimations involving the weighted energies $\int u^{1+p}\gamma^q(v)$ with the varying parameters $p,q$. Luckily, by a careful recombination we are able to obtain the uniform-in-time boundedness.

\begin{lemma}\label{lemn3exist}
	Assume $n\geq3$.  Suppose that  $\gamma(\cdot)$ satisfies  $\mathrm{(A0)}$,  $\mathrm{(A1)}$,  and  $\mathrm{(A3b)}$. Then there is $C>0$ depending only on the initial data and $\Omega$ such that
	\begin{equation*}
	\sup\limits_{0\leq t<T_{\mathrm{max}}}\int_\Omega u^{1+[\frac{n}{2}]}dx\leq C.
	\end{equation*}
\end{lemma}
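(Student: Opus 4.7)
The plan is to iterate over a coupled family of weighted-energy inequalities for the functionals
\begin{equation*}
E_{p,q}(t):=\int_\Omega u^p(\cdot,t)\,\gamma^q(v(\cdot,t))\,dx,\qquad p\in\{2,\ldots,1+[\tfrac{n}{2}]\},\ q\in\{0,\ldots,p-1\}.
\end{equation*}
By Lemma \ref{lbdv} and Corollary \ref{cor1a} (which applies here because $\mathrm{(A3b)}$ forces assumption $\mathrm{(A2)}$ with some $k<\frac{2}{n-2}$ via Lemma \ref{lemA23}), $v$ lies between two time-independent strictly positive constants, so $\gamma(v)$ is bounded above and below, and the target reduces to $\sup_t E_{1+[n/2],\,0}<\infty$.

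To derive the differential identity for a single $E_{p,q}$, I would test the $u$-equation $u_t=\Delta(\gamma(v)u)$ against $p u^{p-1}\gamma^q(v)$ and, in the term $q\int u^p\gamma^{q-1}\gamma'(v)v_t$ arising from $\partial_t\gamma^q(v)$, substitute $\varepsilon v_t=\Delta v-v+u$. Following the integration-by-parts scheme of \eqref{inte0}--\eqref{e4}, but now with extra $\varepsilon^{-1}$-contributions from the parabolic $v$-equation, one arrives at an inequality of the shape
\begin{equation*}
\frac{d}{dt}E_{p,q}+p(p-1)\int_\Omega u^{p-2}\gamma^{q+1}|\nabla u|^2\,dx+\mathcal{Q}_{p,q}\leq C\,E_{p,q}+C+N_{p,q},
\end{equation*}
where $\mathcal{Q}_{p,q}$ is a quadratic form in $\nabla u,\nabla v$ whose sign is controlled by a pointwise inequality of the type $c_{p,q}|\gamma'(v)|^2\leq\gamma(v)\gamma''(v)$, the $C\,E_{p,q}+C$ terms absorb the bounded factors $v\leq v^*$ and $(I-\Delta)^{-1}[\gamma(v)u]\leq\gamma(v_*)w$, and $N_{p,q}$ collects lower-order contributions of the form $E_{p',q'}$ with $p'<p$, together with the manifestly non-positive piece $\frac{q}{\varepsilon}\int u^{p+1}\gamma^{q-1}\gamma'(v)$ that can be dropped since $\gamma'\leq 0$.

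The crucial structural point is that by choosing $q\in\{0,\ldots,p-1\}$ optimally for each $p\leq 1+[n/2]$ and applying Young's inequality to the cross terms $\nabla u\cdot\nabla v$, the worst-case absorption constant $c_{p,q}$ never exceeds the integer $1+[n/2]$; assumption $\mathrm{(A3b)}$ then secures $\mathcal{Q}_{p,q}\geq 0$ pointwise, yielding after absorption
\begin{equation*}
\frac{d}{dt}E_{p,q}+c\int_\Omega|\nabla u^{p/2}|^2\gamma^{q+1}(v)\,dx\leq C\,E_{p,q}+C\bigl(1+\mathcal{L}_{p-1}\bigr),
\end{equation*}
where $\mathcal{L}_{p-1}$ stands for the energies with $u$-power strictly less than $p$, already controlled at the previous stage of the iteration.

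The iteration itself is then standard. The base case $p=2$ follows from Lemma \ref{wH1b}, which provides $\int_t^{t+1}\int_\Omega u^2\,dx\,ds\leq C$ because $\gamma(v)\geq\gamma(v^*)>0$; the uniform Gronwall inequality (Lemma \ref{uniformGronwall}) then converts this into $\sup_t E_{2,q}\leq C$. The gradient integral on the left-hand side yields, via the Gagliardo--Nirenberg embedding applied to $u^{p/2}$, a time-integrated $L^r_x$-bound with $r=p(1+\tfrac{2}{n})$, which feeds the source term at step $p+1$. Repeating $[n/2]$ times reaches $p=1+[n/2]$ and concludes the proof. I expect the main obstacle to lie in the fact that the parabolic $\varepsilon^{-1}v_t$-contribution couples $E_{p,q}$ to $E_{p+1,q-1}$ through a cross term involving a $\gamma$-power that does \emph{not} match the one coming from the $u$-equation, so a single $(p,q)$ estimate does not suffice (in contrast to the $\varepsilon=0$ case of Section 4), and the system must be treated simultaneously across several $q$'s at each level $p$ in order to keep the absorption constant at precisely $1+[n/2]$.
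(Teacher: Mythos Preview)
Your overall plan coincides with the paper's: for each integer level one forms a linear combination of the weighted functionals $\int_\Omega u^{1+p}\gamma^j(v)\,dx$ over $j=0,1,\ldots,p$, chooses the weights so that all cross terms $\nabla u\cdot\nabla v$ are absorbed under precisely the pointwise hypothesis $\mathrm{(A3b)}$, and then iterates in $p$ by the uniform Gronwall lemma. (The paper's optimal weights turn out to be the binomial coefficients $\lambda_{p,j}=\binom{p}{j}$, and the resulting constraint at level $p$ is $(p+1)|\gamma'|^2\leq\gamma\gamma''$, saturated at $p=[\tfrac{n}{2}]$.) Your closing paragraph also correctly anticipates that a single $(p,q)$ estimate cannot work because the $\Delta v$ contribution from $v_t$ shifts the $\gamma$-power by one.

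There is, however, a genuine gap in your iteration mechanism. You put the term $\tfrac{q}{\varepsilon}\int_\Omega u^{p+1}\gamma^{q-1}\gamma'(v)$ on the right, observe it is non-positive, and discard it; you then propose to climb from level $p$ to level $p+1$ via Gagliardo--Nirenberg applied to the retained gradient term. But that embedding only yields a time-integrated $L^{p(1+2/n)}$ bound for $u$, and $p(1+\tfrac{2}{n})<p+1$ whenever $p<\tfrac{n}{2}$; consequently, for every $n\geq5$ already the first step (from $\int_\Omega u^2$ to $\int_t^{t+\tau}\!\int_\Omega u^3$) fails to close. The paper does \emph{not} discard that term: moved to the left it becomes, after the weighted summation,
\[
-\sum_{j=1}^{p} j\lambda_{p,j}\int_\Omega u^{2+p}\gamma^{j-1}(v)\gamma'(v)\,dx\;\geq\;c\int_\Omega u^{2+p}\,dx,
\]
since $\mathrm{(A0)}$, $\mathrm{(A1)}$, $\mathrm{(A3b)}$ force $\gamma'<0$ strictly (Lemma~\ref{lemA23}) and $v$ is confined to a compact interval $[v_*,v^*]$. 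Integrating the resulting differential inequality over $[t,t+\tau]$ gives directly $\int_t^{t+\tau}\!\int_\Omega u^{2+p}\leq C$, which is exactly the input the uniform Gronwall lemma needs at the next integer level. In short, the ``non-positive piece'' you drop is the engine of the iteration, not a nuisance term; the gradient term the paper retains plays no role in the bootstrap. (A small side remark: in the fully parabolic derivation no $(I-\Delta)^{-1}$ factor arises, because one substitutes $\varepsilon v_t=\Delta v-v+u$ directly rather than invoking the key identity \eqref{keyid}; the only bounded lower-order contribution on the right is the one coming from the $-v$ piece.)
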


\begin{proof}In the same manner as before, we first compute by integration by parts to obtain that
	\begin{equation}
		\begin{split}\non
		&\frac{d}{dt}\int_\Omega u^{1+p}\gamma^q(v)dx\\
		=&(1+p)\int_\Omega u^p\gamma^q(v)u_t+q\int_\Omega u^{1+p}\gamma^{q-1}(v)\gamma'(v)v_t\\
		=&(1+p)\int_\Omega u^p\gamma^q(v)\Delta (u\gamma(v))+q\int_\Omega u^{1+p}\gamma^{q-1}(v)\gamma'(v)(u-v+\Delta v)\\
		=&-(1+p)\int_\Omega \nabla (u^p\gamma^q(v))\cdot\nabla (u\gamma(v))+q\int_\Omega u^{2+p}\gamma^{q-1}(v)\gamma'(v)-q\int_\Omega u^{1+p}\gamma^{q-1}(v)\gamma'(v)v\\
		&\qquad-q\int_\Omega \nabla(u^{1+p}\gamma^{q-1}(v)\gamma'(v))\cdot\nabla v.
		\end{split}
	\end{equation}The main difference here is that we need to use the second equation in \eqref{chemo1} to replace $v_t$.
	
Recalling \eqref{inte0}, 
\begin{equation}
	\begin{split}\non
	&(1+p)\int_\Omega \nabla (u^p\gamma^q(v))\cdot\nabla (u\gamma(v))\\
	=&p(1+p)\int_\Omega u^{p-1}\gamma^{1+q}|\nabla u|^2+q(1+p)\int_\Omega u^{1+p}\gamma^{q-1}|\gamma'|^2|\nabla v|^2\\
	&\qquad+(1+p)(p+q)\int_\Omega u^p\gamma^q\gamma'\nabla u\cdot\nabla v,
	\end{split}
\end{equation}
and by integration by parts again,
\begin{equation}
	\begin{split}\non
	&q\int_\Omega \nabla(u^{1+p}\gamma^{q-1}(v)\gamma'(v))\cdot\nabla v\\
	=&q(1+p)\int_\Omega u^p\gamma^{q-1}\gamma'\nabla u\cdot\nabla v+q\int_\Omega u^{1+p}\gamma^{q-2}\bigg((q-1)|\gamma'|^2+\gamma\gamma''\bigg)|\nabla v|^2.
	\end{split}
\end{equation}
As a result, we obtain that
\begin{equation}
	\begin{split}\label{kestb0}
	&\frac{d}{dt}\int_\Omega u^{1+p}\gamma^q(v)dx+p(1+p)\int_\Omega u^{p-1}\gamma^{1+q}|\nabla u|^2+q(1+p)\int_\Omega u^{1+p}\gamma^{q-1}|\gamma'|^2|\nabla v|^2\\
	&+q \int_\Omega u^{1+p}\gamma^{q-2}\bigg((q-1)|\gamma'|^2+\gamma\gamma''\bigg)|\nabla v|^2-q\int_\Omega u^{2+p}\gamma^{q-1}(v)\gamma'(v)\\
	=&-(1+p)(p+q)\int_\Omega u^p\gamma^q\gamma'\nabla u\cdot\nabla v-q(1+p)\int_\Omega u^p\gamma^{q-1}\gamma'\nabla u\cdot\nabla v\\
	&\qquad-q\int_\Omega u^{1+p}\gamma^{q-1}(v)\gamma'(v)v.
	\end{split}
\end{equation}
In particular, if $p=q$, since
\begin{equation}
\begin{split}\non
&(1+p)\int_\Omega \nabla (u^p\gamma^p(v))\cdot\nabla (u\gamma(v))\\
=&p(1+p)\int_\Omega (u\gamma)^{p-1}|\nabla (u\gamma)|^2,
\end{split}
\end{equation}
one obtains the following estimate
\begin{equation}
\begin{split}\nonumber
&\frac{d}{dt}\int_\Omega u^{1+p}\gamma^p(v)dx+p(1+p)\int_\Omega (u\gamma)^{p-1}|\nabla (u\gamma)|^2\\
&\qquad +p \int_\Omega u^{1+p}\gamma^{p-2}\bigg((p-1)|\gamma'|^2+\gamma\gamma''\bigg)|\nabla v|^2-p\int_\Omega u^{2+p}\gamma^{p-1}(v)\gamma'(v)\\
=&-p(1+p)\int_\Omega u^p\gamma^{p-1}\gamma'\nabla u\cdot\nabla v-p\int_\Omega u^{1+p}\gamma^{p-1}(v)\gamma'(v)v.
\end{split}
\end{equation}

Now, let $1\leq p\leq[\frac{n}{2}]$ be any integer fixed. In addition, assume  $q=j$ in \eqref{kestb0} with $j=0,1,2,...,p$ and  multiply the $j$th formula by $\lambda_{p,j}>0$ to be specify later. Then a summation yields that
\begin{equation}
\begin{split}
&\frac{d}{dt}\sum_{j=0}^{j=p}\lambda_{p,j}\int_\Omega u^{1+p}\gamma^j(v)dx+\lambda_{p,p}p(1+p)\int_\Omega (u\gamma)^{p-1}|\nabla(u\gamma)|^{2}\\
&+\left\{\int_\Omega u^{1+p}\gamma^{p-2}\bigg((p-1)\big(p\lambda_{p,p}+(p+1)\lambda_{p-1}\big)|\gamma'|^2+p\lambda_{p,p}\gamma\gamma''\bigg)|\nabla v|^2\right.\\
&\;\;\left.+\lambda_{p,p-1}p(p+1)\int_\Omega u^{p-1}\gamma^p|\nabla u|^2+(1+p)\big(p\lambda_{p,p}+(2p-1)\lambda_{p,p-1}\big)\int_\Omega u^p\gamma^{p-1}\gamma'\nabla u\cdot\nabla v\right\}\\
&+\left\{\int_\Omega u^{1+p}\gamma^{p-3}\bigg((p-2)\big((p-1)\lambda_{p,p-1}+(p+1)\lambda_{p,p-2}\big)|\gamma'|^2+(p-1)\lambda_{p,p-1}\gamma\gamma''\bigg)|\nabla v|^2\right.\\
&\left.+\lambda_{p,p-2}p(p+1)\int_\Omega u^{p-1}\gamma^{p-1}|\nabla u|^2+(1+p)\big((p-1)\lambda_{p,p-1}+(2p-2)\lambda_{p,p-2}\big)\int_\Omega u^p\gamma^{p-2}\gamma'\nabla u\cdot\nabla v\right\}\\
&+...\\
&+\left\{\lambda_{p,1}\int_\Omega u^{1+p}\gamma''|\nabla v|^2+\lambda_{p,0}p(p+1)\int_\Omega u^{p-1}\gamma|\nabla u|^2\right.\\
&\left.+(1+p)\big(\lambda_{p,1}+p\lambda_{p,0}\big)\int_\Omega u^p\gamma'\nabla u\cdot\nabla v\right\}\\
&-\sum_{j=1}^{j=p}j\lambda_{p,j}\int_\Omega u^{2+p}\gamma'(v)\gamma^{j-1}\\
=&-\sum_{j=1}^{j=p}j\lambda_{p,j}\int_\Omega u^{1+p}\gamma'(v)\gamma^{j-1}v.
\end{split}\non
\end{equation}
For $1\leq j\leq p$, we define
\begin{equation}\non
\begin{split}
\Lambda_{p,j}=&\int_\Omega u^{1+p}\gamma^{j-2}\bigg((j-1)\big(j\lambda_{p,j}+(p+1)\lambda_{p,j-1}\big)|\gamma'|^2+j\lambda_{p,j}\gamma\gamma''\bigg)|\nabla v|^2\\
&+\lambda_{p,j-1}p(p+1)\int_\Omega u^{p-1}\gamma^{j}|\nabla u|^2+(1+p)\big(j\lambda_{p,j}+(p+j-1)\lambda_{p,j-1}\big)\int_\Omega u^p\gamma^{j-1}\gamma'\nabla u\cdot\nabla v.
\end{split}
\end{equation}
Then we obtain that
\begin{equation}
	\begin{split}\label{kest222}
	&\frac{d}{dt}\sum_{j=0}^{j=p}\lambda_{p,j}\int_\Omega u^{1+p}\gamma^j(v)dx+\lambda_{p,p}p(1+p)\int_\Omega (u\gamma)^{p-1}|\nabla(u\gamma)|^{2}\\
	&+\sum_{j=1}^{j=p}\Lambda_{p,j}-\sum_{j=1}^{j=p}j\lambda_{p,j}\int_\Omega u^{2+p}\gamma'(v)\gamma^{j-1}\\
	=&-\sum_{j=1}^{j=p}j\lambda_{p,j}\int_\Omega u^{1+p}\gamma'(v)\gamma^{j-1}v.
	\end{split}
\end{equation}
Invoking Young's inequality, we infer that
\begin{equation}
	\begin{split}\non
	&(1+p)\big(j\lambda_{p,j}+(p+j-1)\lambda_{p,j-1}\big)\int_\Omega u^p\gamma^{j-1}\gamma'\nabla u\cdot\nabla v\\
	\leq&\lambda_{p,j-1}p(p+1)\int_\Omega u^{p-1}\gamma^{j}|\nabla u|^2+\frac{(1+p)\bigg[j\lambda_{p,j}+(p+j-1)\lambda_{p,j-1}\bigg]^2}{4p\lambda_{p,j-1}}\int_\Omega u^{1+p}\gamma^{j-2}|\gamma'|^2|\nabla v|^2.
	\end{split}
\end{equation}
Hence, in the same spirit as before, we have $\Lambda_{p,j}\geq0$ provided that for all $s>0$
\begin{equation}
	\begin{split}\non
&\frac{(1+p)\bigg[j\lambda_{p,j}+(p+j-1)\lambda_{p,j-1}\bigg]^2}{4p\lambda_{p,j-1}}|\gamma'(s)|^2\\
\leq& (j-1)\big(j\lambda_{p,j}+(p+1)\lambda_{p,j-1}\big)|\gamma'(s)|^2+j\lambda_{p,j}\gamma(s)\gamma''(s),
	\end{split}
\end{equation}
which by simple computations is equivalent to
\begin{equation}
	\begin{split}\label{condition2}
	\frac{(1+p)j^2\lambda_{p,j}^2+(1+p)(p+1-j)^2\lambda_{p,j-1}^2+2j\lambda_{p,j-1}\lambda_{p,j}(p^2-pj+2p+j-1)}{4pj\lambda_{p,j-1}\lambda_{p,j}}|\gamma'|^2\leq\gamma\gamma''.
	\end{split}
\end{equation}
Observe that by Young's inequality again,
\begin{equation}
	\begin{split}\non
	&(1+p)j^2\lambda_{p,j}^2+(1+p)(p+1-j)^2\lambda_{p,j-1}^2+2j\lambda_{p,j-1}\lambda_{p,j}(p^2-pj+2p+j-1)\\
	&\geq4pj\lambda_{p,j-1}\lambda_{p,j}(p+2-j)
	\end{split}
\end{equation}
where the minimum is attained provided that
\begin{equation}\non
	j\lambda_{p,j}=(p+1-j)\lambda_{p,j-1}.
\end{equation}
Thus, if we take $\lambda_{p,0}=1$ and $\lambda_{p,j}=(p+1-j)\lambda_{j-1}/j$ for $1\leq j\leq p$, condition \eqref{condition2} reads
\begin{equation}
	(p+2-j)|\gamma'|^2\leq \gamma\gamma'',\;\;\forall \;s>0\;\text{and for all}\;1\leq j\leq p.
\end{equation}
Thus, under the assumption $\mathrm{(A3b)}$, one can always find $\lambda_{p,j}>0$ such that $\Lambda_{pj}\geq0$ for any fixed $1\leq p\leq [\frac{n}{2}]$ with all $1\leq j\leq p$. As a result, there holds
\begin{equation}
\begin{split}\label{kest222c}
&\frac{d}{dt}\sum_{j=0}^{j=p}\lambda_{p,j}\int_\Omega u^{1+p}\gamma^j(v)dx+\lambda_{p,p}p(1+p)\int_\Omega (u\gamma)^{p-1}|\nabla(u\gamma)|^{2}-\sum_{j=1}^{j=p}j\lambda_{p,j}\int_\Omega u^{2+p}\gamma'(v)\gamma^{j-1}\\
\leq&-\sum_{j=1}^{j=p}j\lambda_{p,j}\int_\Omega u^{1+p}\gamma'(v)\gamma^{j-1}v.
\end{split}
\end{equation}

Now, we recall that $v$ is uniformly-in-time bounded from above and below under the assumption $\mathrm{(A3b)}$                                       . Moreover by Lemma \ref{wH1b},
\begin{equation}\non
\int_t^{t+\tau}\int_\Omega u^2dxds\leq C
\end{equation}with $C>0$ independent of time. We can first take $p=1$ in \eqref{kest222c} and use the uniform Gronwall inequality together with the above estimates to derive that
\begin{equation}
\sup\limits_{0\leq t<T_{\mathrm max}}\int_\Omega u^2dx\leq C.\non
\end{equation}
Moreover thanks to the third term on the left-hand side of \eqref{kest222c}, there holds
\begin{equation}\non
\int_t^{t+\tau}\int_\Omega u^3dxds\leq C.
\end{equation}
Subsequently, in the same manner as above, we can deduce by iterations that
\begin{equation}
\sup\limits_{0\leq t<T_{\mathrm max}}\int_\Omega u^{1+[\frac{n}{2}]}dx\leq C\non.
\end{equation}
\end{proof}
\begin{remark}
	Our assumption $\mathrm{(A3b)}$ is independent of the coefficients of the system. Indeed, if we replace the second equation of system \eqref{chemo1} by $v_t-\alpha\Delta v+\beta v=\theta u$ with some $\alpha,\beta,\theta>0$, one easily checks that condition \eqref{condition2} becomes
	\begin{equation}\non
		\begin{split}\label{condition2b}
	\frac{(1+p)\alpha^2j^2\lambda_j^2+(1+p)(p+1-j)^2\lambda_{j-1}^2+2\alpha j\lambda_{j-1}\lambda_j(p^2-pj+2p+j-1)}{4\alpha pj\lambda_{j-1}\lambda_j}|\gamma'|^2\leq\gamma\gamma'',
	\end{split}
	\end{equation}
which still yields to
\begin{equation}\non
(p+2-j)|\gamma'|^2\leq \gamma\gamma'',\;\;\forall \;s>0\;\text{and for all}\;1\leq j\leq p,
\end{equation}
 if we take $\lambda_0=1$ and $\lambda_j=\frac{(p+1-j)\lambda_{j-1}}{j\alpha}$ for $1\leq j\leq p$.
\end{remark}

\bigskip
\noindent\textbf{Acknowledgments} \\
K. Fujie is supported by Japan Society for the Promotion of Science (Grant-in-Aid for Early-Career Scientists; No.\ 19K14576).

\end{document}